\theoremstyle{plain}
\newtheorem{thm}{Theorem}[section]
\newtheorem*{thm*}{Theorem}
\newtheorem{prop}[thm]{Proposition}
\newtheorem{lemma}[thm]{Lemma}
\newtheorem*{lemma*}{Lemma}
\newtheorem{example}[thm]{Example}
\theoremstyle{definition}
\newtheorem{definition}[thm]{Definition} 
\newtheorem*{definition*}{Definition} 
\theoremstyle{remark}
\newtheorem{remark}[thm]{Remark}
\numberwithin{equation}{thm}
\newcommand{\R}{{\mathbb{R}}} 
\newcommand{\Z}{\mathbb{Z}}
\newcommand{\Q}{{\mathbb{Q}}}
\newcommand{\C}{{\mathbb{C}}}
\newcommand{\N}{{\mathbb{N}}}
\newcommand{\F}{\mathbb{F}}
\newcommand{\cO}{\mathcal O}
\newcommand{\define}{\mathrel{\mathop:}=}
\newcommand{\longrightharpoonup}{\relbar\joinrel\rightharpoonup}
\newcommand{\pfold}[1]{\mathbin{\raisebox{-0.1em}{\ensuremath{\overset{#1}{\longrightharpoonup}}}}}
\newcommand{\Shadow}{\mathrm{Sh}} 
\newcommand{\id}{{\bf{1}}} 
\renewcommand{\v}{{\mathrm{v}}}
\newcommand{\type}{\tau} 
\newcommand{\dtype}{\hat{\tau}} 
\newcommand{\fa}{a_0} 
\newcommand{\Cf}{\mathcal{C}_0} 
\newcommand{\aW}{W} 
\newcommand{\sW}{W_0} 
\newcommand{\mvert}{\;\vert\;} 
\newcommand{\conv}{\mathrm{conv}}
\newcommand{\App}{\mathcal{A}}
\newcommand{\Stab}{\operatorname{Stab}}
\newcommand{\Mor}{\operatorname{Mor}}
\begin{document}
	
\title[Shadows in the wild]{Shadows in the wild -\\ folded galleries and their applications}

\author{Petra Schwer}
\address{Otto-von-Guericke University Magdeburg, IAG, Postschließfach 4120, 39016 Magdeburg, Germany}
\email{petra.schwer@ovgu.de}

\date{ \today }
\thanks{}

\begin{abstract}
This survey is about combinatorial objects related to reflection groups and their applications in representation theory and arithmetic geometry. Coxeter groups and folded galleries in Coxeter complexes are introduced in detail and illustrated by examples. Further it is explained how they relate to retractions in Bruhat-Tits buildings and to the geometry of affine flag varieties and affine Grassmannians. The goal is to make these topics accessible to a wide audience.  
\end{abstract}

\maketitle


\section{Introduction}\label{introduction}

Symmetric groups $S_n$, for $n\in\N$, permuting the elements of an $n$-element set may seem to be the simplest possible examples of groups. Simultaneously they are also universal in a sense as every finite group is in fact a subgroup of some large enough symmetric group. 

It may not seem obvious, but symmetric groups are also prime examples of Coxeter groups, that is (abstractions of) reflection groups. Have a look at Figure~\ref{fig:ComplexA2}. The symmetric group $S_3$ acts on the figure by permuting the three red vertices of the yellow triangle. 

\begin{figure}[htb]
	\begin{center}
		\begin{overpic}[width=0.5\textwidth]{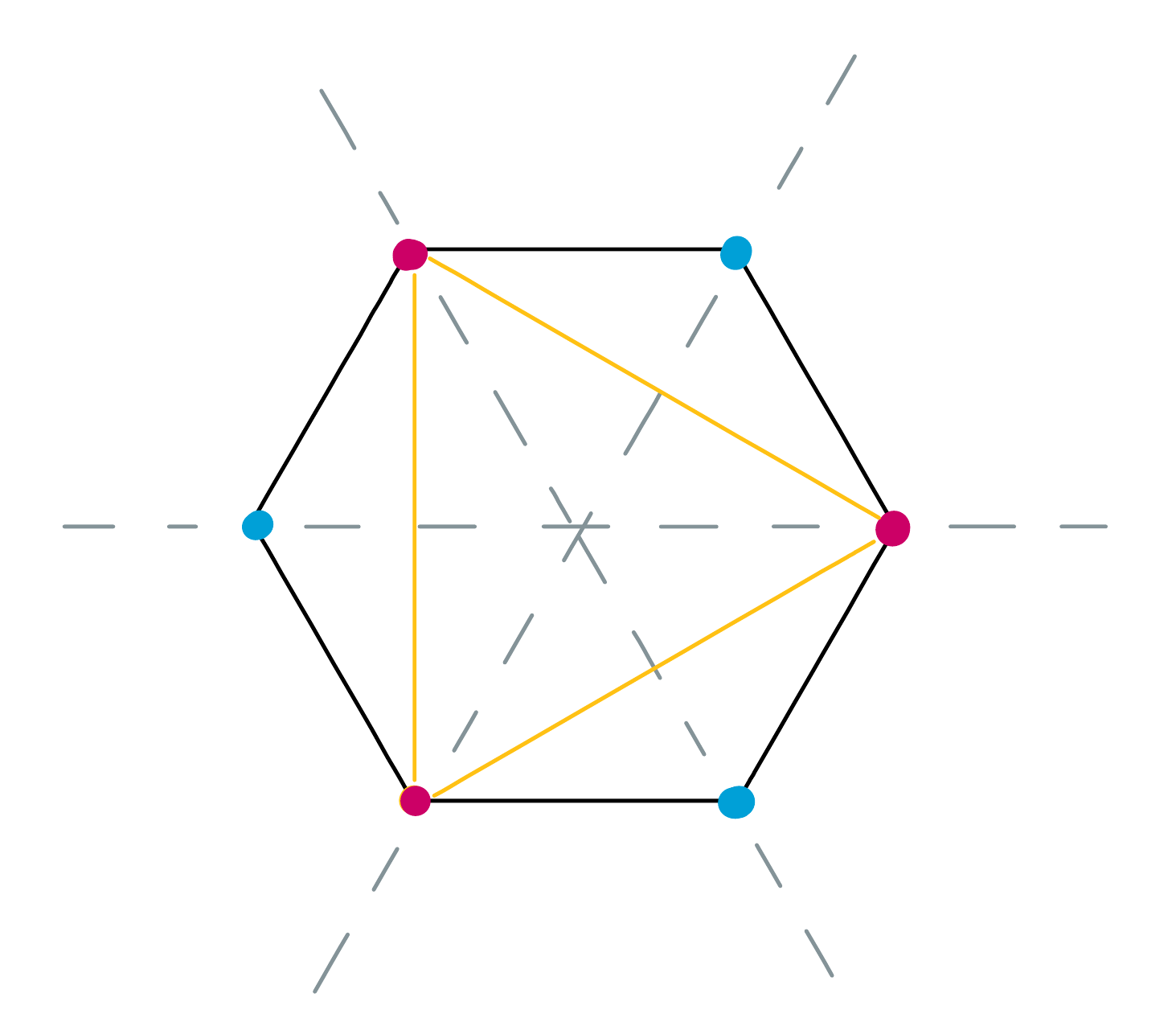}
			\put(45, 70){\tiny{$\id$}}
			\put(70, 55){\tiny{$s$}}
			\put(21, 55){\tiny{$t$}}
			\put(20, 30){\tiny{$st$}}
			\put(70, 30){\tiny{$ts$}}
			\put(45, 15){\tiny{$w_0$}}
		\end{overpic}	
		\caption{Coxeter complex of $S_3$}
		\label{fig:ComplexA2}
	\end{center}
\end{figure}

As a reflection group $S_3$ is generated by two reflections along the two diagonal dashed lines. One of these reflections maps the edge labeled $\id$ to the edge labeled $t$ and the other generator the same edge to the one labeled $s$. 
One can see, that $S_3$ hence also acts on the hexagon by color-preserving symmetries. This hexagon is an example of a Coxeter complex which can be associated to any Coxeter group.

Coxeter groups play an important role in many contexts of mathematics. They are the center of study in algebraic combinatorics, where one assigns many interesting discrete structures, combinatorial objects and counting statistics to such groups.
Moreover, Coxeter groups appear as Weyl groups in the study of reductive groups or  Lie groups and in the representation theory of the associated algebras. Many interesting phenomena are governed by the combinatorial structures attached to Coxeter groups. In geometric group theory (Bruhat-Tits) buildings serve as a prime class of nonpositively curved spaces and Coxeter groups as a prime example of CAT(0) groups, the geometry of which has many interesting properties. 

Maybe the most important tool to study a Coxeter group $W$ is their Coxeter complex $\Sigma$, a simplicial complex that has been introduced by Tits in the early 1960s \cite{Tits-Groupes}. This complex encodes how cosets of a specific class of subgroups of $W$ are nested and has the property that the Coxeter group acts simply transitively and cocompactly on that complex. In particular every element in $W$ can be uniquely identified with a maximal simplex in $\Sigma$. 
The group $S_3$ has 6 elements, $\id, s,t,ts,st$ and $sts=w_0$. These six elements can be identified with the six edges of the hexagon as illustrated in Figure~\ref{fig:ComplexA2}.  

In this survey we would like to highlight some recent applications of combinatorial structures associated with affine Coxeter groups. These form an important class of infinite Coxeter groups. A first example is the infinite dihedral group that acts on the number line and is generated by two reflections along adjacent integers as illustrated in Figure~\ref{fig:Dinfty}.   

\begin{figure}[htb]
	\begin{center}
		\includegraphics[width=0.8\textwidth]{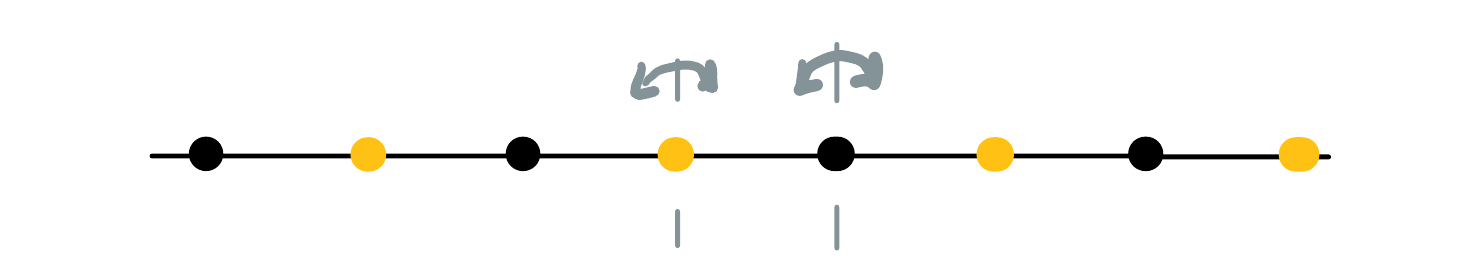}
	\end{center}
	\caption{The infinite dihedral group is generated by two reflections on adjacent integers in $\R$.  }
	\label{fig:Dinfty}
\end{figure}

The key tools for us are so called folded galleries. A gallery in a Coxeter complex is a sequence of maximal cells where subsequent ones share at least a codimension one face. See \ref{def:gallery} for a precise definition and Figure~\ref{fig:folded galleries} for an example. In this example a folded gallery is indicated by a path walking through the maximal simplices in the gallery. 
Folded galleries (and paths) appeared first in the work of Littelmann \cite{Littelmann1, Littelmann2} and Gaussent--Littelmann \cite{GaussentLittelmann} in the context of highest weight representations of semisimple complex Lie algebras. 

Coxeter groups are sometimes Weyl groups associated to a semisimple algebraic group $G$. Gaussent and Littelmann show in \cite{GaussentLittelmann} that the combinatorics of folded galleries is closely linked with the structure of the affine Grassmannian of $G$. There are natural ways to interpret folded galleries and shadows in the Bruhat-Tits building associated with $G$. These buildings are simplicial complexes which are gluings of (possibly infinitely many) copies of a Coxeter complex $\Sigma$ in a symmetric and controlled way such that the group $G$ acts transitively on the copies of $\Sigma$ but also on the set of maximal simplices of the building. As mentioned above, the Coxeter complex associated with the infinite dihedral group is the number line. The corresponding buildings are precisely the simplicial trees without leaves. The Coxeter group in question is the infinite dihedral group. See Figure~\ref{fig:tree_p2} for an example. 

Since their first appearance folded paths and galleries have  been used in numerous places. 
Many of them are highlighted in Ram's survey on folded galleries, the alcove walk model (equivalent to the path model developed by Littelmann) and applications in the context of symmetric functions,  Hecke algebras and structure constants, see \cite{Ram}. 
Moreover, C. Schwer \cite{CSchwer} has used folded galleries to study Hall-Littlewood polynomials. 
Ehrig \cite{Ehrig} showed that MV polytopes can be characterized using folded galleries. 
The famous Saturation conjecture was proven for $SL_n$ by Kapovich and Millson using folded paths in \cite{KM}. 
This gives just a few examples.

The guiding principle will be the following: 
certain double coset intersections of subgroups in a reductive algebraic group with a Bruhat-Tits building can be interpreted in terms of various retractions onto a preferred Coxeter complex inside the building. These retractions are either centered at an alcove, a Weyl chamber or a chimney. See Sections \ref{sec:Littelmann} and \ref{sec:ADLV} for details and Figures~\ref{fig:retraction1} and \ref{fig:retraction_infty} for an illustration of the first two kinds of retractions. In turn the retractions naturally give rise to positively folded galleries. 
We highlight some of these applications in \Cref{sec:applications}. 
Collecting all positively folded galleries of a same type in a single set we may define shadows. These shadows simultaneously capture the image of several galleries under some retractions and have natural interpretations in terms of subgroups of $G$ and also in terms of buildings.  

We will see that retractions and folded galleries explain nonemptiness and dimensions of double cosets in both the affine Grassmannian and affine flag variety. 
This phenomenon will be highlighted in Section \ref{sec:Kostant} where we explain a convexity theorem \cite{Convexity} for reductive groups and buildings analogous to the classical convexity theorem by {Kostant \cite{Kostant}}. 
In Section \ref{sec:ADLV}, which is based on \cite{MST, MST2, MNST}, we will see how the same underlying principle (although with much more involved combinatorial methods and proofs) allows us to show nonemptiness and to compute dimensions of affine Deligne-Lusztig varieties in the affine flag variety as also certain Iwahori double cosets are governed by folded galleries. 

I am by no means claiming completeness, nor originality in writing this survey. My goal is to highlight the combinatorial side of the gallery model for affine flag varieties and affine Grassmanians and hint at its powerful applications. The writing is kept colloquial on purpose in order to keep everything accessible for a wide audience. This also means that if not relevant for the readability not all notions will be formally defined but references will be given. My hope is to pique your curiosity and leave you interested to learn more. Let me know if I was (even remotely) successful.

\subsection*{Organization of this text}

\Cref{sec:CoxeterGroups} lays the foundation for the rest of the survey in introducing Coxeter groups, the associated Coxeter complexes as well as folded galleries. Orientations, positive folds and shadows are introduced in \Cref{sec:orientation}. After that, in Section~\ref{sec:gallerymodel}, the close connection between folded galleries, orbits of subgroups of $G$ and also the Bruhat-Tits buildings and affine Grassmannians is explained. \Cref{sec:Kostant,sec:ADLV} highlight two applications. We close the survey by a quick walk through some more applications and a list of open problems in the last Section~\ref{sec:final}.

 
\section{The main landscape}
\label{sec:CoxeterGroups}

Coxeter groups are a formalization of reflection groups acting on some Euclidean space, sphere or hyperbolic space.
In the 1930s Coxeter studied finite and affine reflection groups and established a particularly nice presentation of them that is encoded in a symmetric matrix with integer entries. In the early 1960 Tits invetigated groups admitting such a representation in a wider context and called these groups Coxeter groups.

In this section we will define Coxeter groups and collect useful facts relevant to the material in this survey. For details and proofs refer to one of the many good textbooks on the topic; for example \cite{BjoernerBrenti, Davis, Bourbaki} or \cite{Humphreys}.

A first example of a well-known class of Coxeter groups are the dihedral groups $D_n$ for $n\in \N\cup\{\infty\}$. For $n\in\N$ these are precisely the symmetry groups of regular $n$-gons. Each of these groups is generated by two reflections. The only infinite group in this class is the infinite dihedral group $D_\infty$ which is the set of symmetries of the number line equipped with a simplicial structure given by the unit intervals, see Figure~\ref{fig:Dinfty}.

A second class of examples are the symmetric groups $S_n$ for $n\in\N$. One may view them as symmetries of an $(n-1)$-simplex acting by permuting the $n$ vertices of the simplex. 
Figure~\ref{fig:ComplexA2} shows the dihedral group $D_6$ which is isomorphic to the symmetric group $S_3$. 

\subsection{Mirror, mirror, on the wall}

Since it is easy to state - here is the formal definition in terms of generators and relations. This presentation is given by a symmetric matrix $(m_{i,j})_{i,j\in I}$ with entries in $\N\cup\{\infty\}$ such that $m_{i,i}=1 < m_{i,j}$ for all $i\neq j \in I$.  

\begin{definition}[Coxeter groups]\label{def:CoxeterGroup}
	A group $W$ with finite generating set $S$ given by the following presentation is a \emph{Coxeter group}: 
	\[
	W=\langle s_i, s_j \in S \mvert s_i^2, (s_is_j)^{m_{i,j}} \rangle, \text{ where } 2\leq m_{i,j}\leq \infty 
	\]
	Here $m_{i,j}=m_{j,i}=\infty$ means that the respective generators do not satisfy any relation. 
	The pair $(W,S)$ is referred to as a \emph{Coxeter system}. Conjugates of $S$ in $W$ are called \emph{reflections}. 
\end{definition}

The defining relations of a Coxeter group are sometimes calles braid relations which does actually have a nice geometric interpretation in terms of braiding strands we will not explain here.  

Most of the time $u, v, w$ denote words in the generators $S$ of $W$ and $[u],[v], [w]$ the associated elements in $W$. The elements in $W$ will be denoted by $x,y,z$ if no defining word is specified. Every subset $S'$ of the generators $S$ defines a \emph{standard parabolic} subgroup $W_{S'}$ of $W$ and each pair $(W_{S'}, S')$ is itself a Coxeter system.

Here are some more examples of finite and infinite Coxeter groups.

\begin{figure}[htb]
	\begin{center}
		\includegraphics[width=0.3\textwidth]{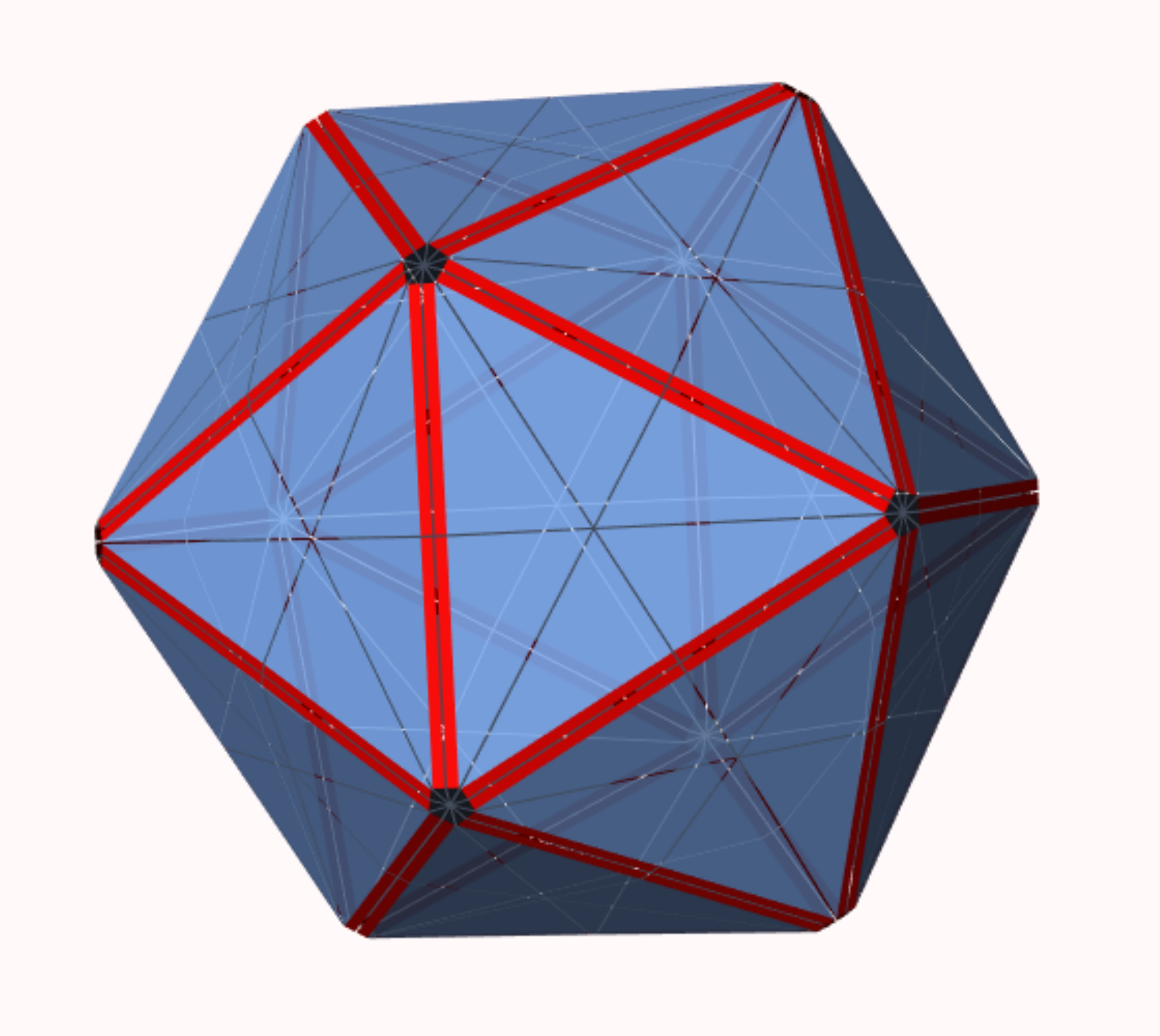} \hspace{2ex}
		\includegraphics[trim = 0 15 0 15, clip,  width=0.3\textwidth]{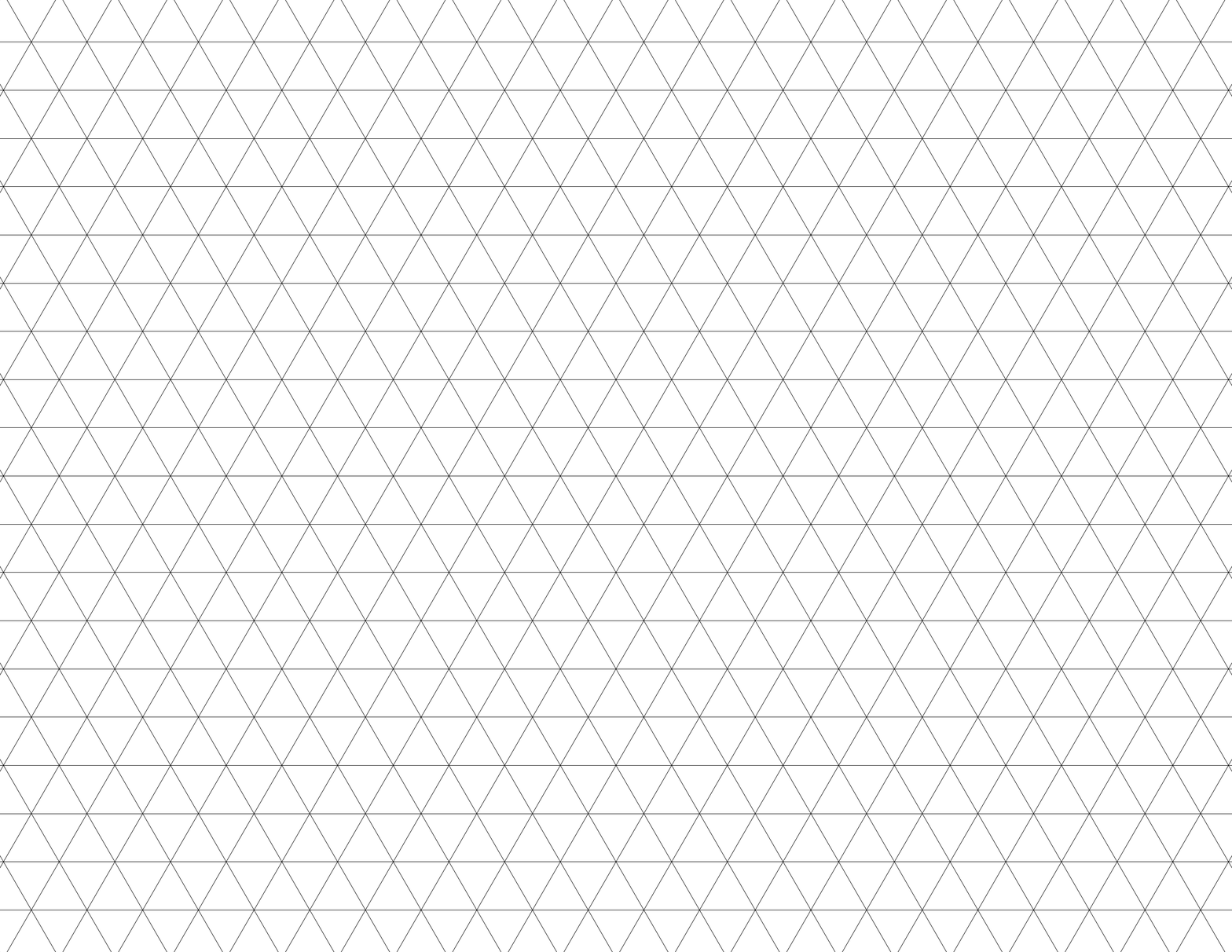} \hspace{2ex}
		\includegraphics[width=0.3\textwidth]{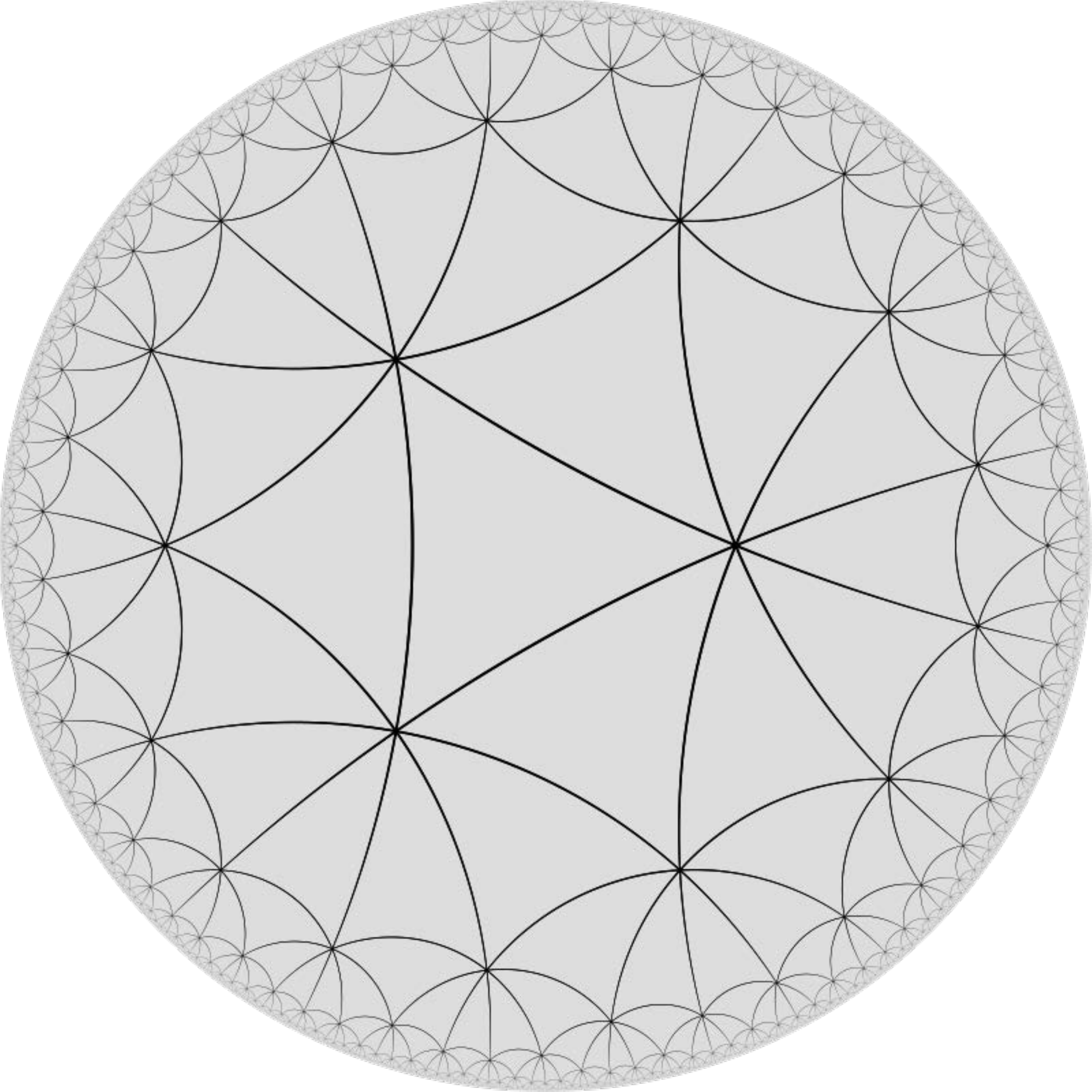} \hspace{2ex}
	\end{center}
	\caption{Three examples of Coxeter complexes. Compare Example~\ref{ex:CoxeterGroups} for details. Picture on the left is taken from Wikipedia.}
	\label{fig:CoxeterGroups}
\end{figure}

\begin{example}\label{ex:CoxeterGroups}
Figure~\ref{fig:CoxeterGroups} shows three Coxeter groups. The one on the left is the symmetry group $W_1$ of the icosahedron. A fundamental domain of the action is a cell of the barycentric subdivision (shown with thin black lines) of the icosahedron. The Coxeter presentation of $W_1$ is 
\[ W_1=\langle s_1, s_2, s_3 \;\vert\; s_i^2,  (s_1s_2)^2, (s_2s_3)^5, (s_3s_1)^3\rangle\]
The group illustrated by the picture in the middle is the one where the pairwise products of generators are all of order three. The presentation is hence given by 
\[W_2=\langle  s_1, s_2, s_3 \;\vert\; s_i^2,   (s_1s_2)^3, (s_2s_3)^3, (s_3s_1)^3\rangle.\]
The group acting simply transitively on this regular tiling of the plane is generated by three reflections bounding any one of the triangles. Compare also Figure~\ref{fig:ComplexA2tilde}. 
Finally on the right we see the tiling of the hyperbolic plane induced by a second kind of infinite reflection group. Its presentation is given by 
\[W_3=\langle  s_1, s_2, s_3 \;\vert\; s_i^2,  (s_1s_2)^4, (s_2s_3)^4, (s_3s_1)^4 \rangle.\]
This group is also generated by three reflections in the hyperbolic plane on which it acts cocompactly. But there is no cocompact isometric action of this group on the euclidean plane. 

Any reflection in such a $W_i$ corresponds to a reflection hyperplane, i.e. a great circle, affine line or bi-infinite geodesic in the respective figures. The collection of all reflection hyperplanes subdivides the sphere, the affine space or disc in disjoint cells and induces a simplicial structure on the space. The \emph{Coxeter complex} of $(W,S)$ is the underlying abstract simplicial complex of these tilings. The standard generating set $S$ corresponds to the hyperplanes going through the faces of a fixed maximal cell of this subdivision. More about this in \Cref{subsec:Coxetercomplex}. 
\end{example}

Any Coxeter group $W$ is uniquely characterized both by a matrix and a graph. The Coxeter matrix $M=M(S,W)$ is the matrix of size $\vert S\vert\times\vert S\vert$ whose entries are the $m_{i,j}$ from the definition of a Coxeter group. The \emph{Coxeter diagram} has vertex set the elements of $S$. Two of them are joined by an edge if they satisfy a relation with $m_{i,j}\neq 2$. In case $m_{i,j}=3$ we draw a single edge, for $m_{i,j}=4$ one draws a double edge and in all other cases the graph has a labeled edge with edge-label the order $m_{i,j}$.  

The Coxeter diagrams for the groups $W_i$, $i=1,2,3$, introduced in Example~\ref{ex:CoxeterGroups} are shown from left to right in Figure~\ref{fig:Dynkin}.

\begin{figure}[h]
		\begin{overpic}[width=0.3\textwidth]{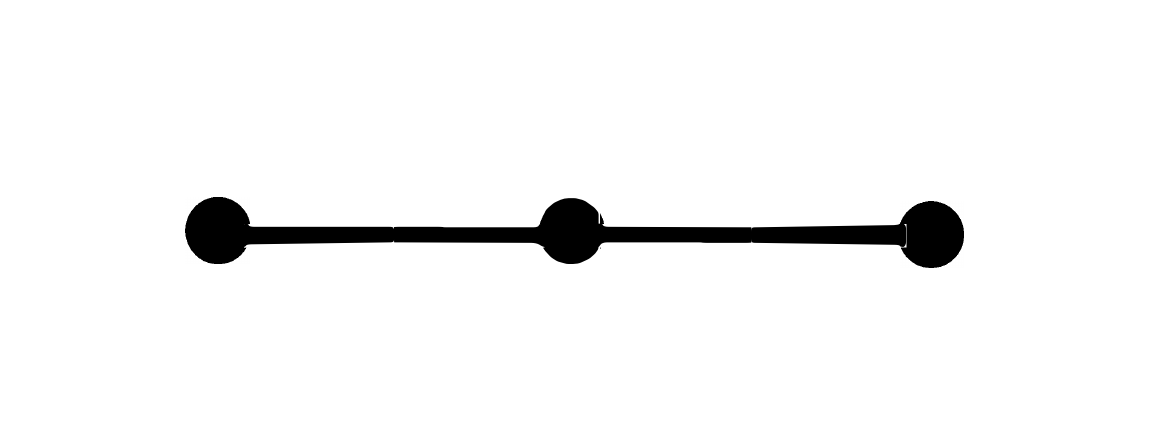}
			\put(17,4){\makebox(0,0)[cb]{$s_1$}}%
			\put(50,4){\makebox(0,0)[cb]{$s_3$}}%
			\put(80,4){\makebox(0,0)[cb]{$s_2$}}%
			\put(62,23){\makebox(0,0)[cb]{$5$}}%
		\end{overpic}
		\begin{overpic}[width=0.3\textwidth]{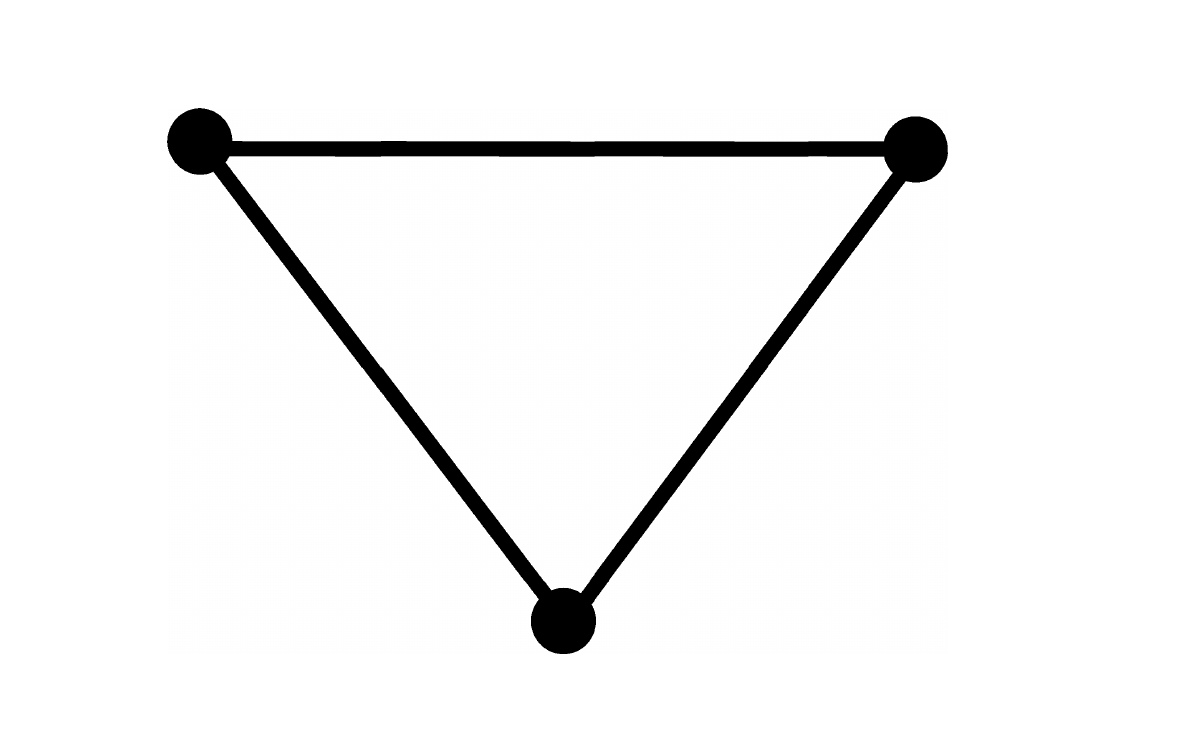}
			\put(17,37){\makebox(0,0)[cb]{$s_1$}}%
			\put(40,6){\makebox(0,0)[cb]{$s_2$}}%
			\put(80,37){\makebox(0,0)[cb]{$s_3$}}%
		\end{overpic}
		\begin{overpic}[width=0.27\textwidth]{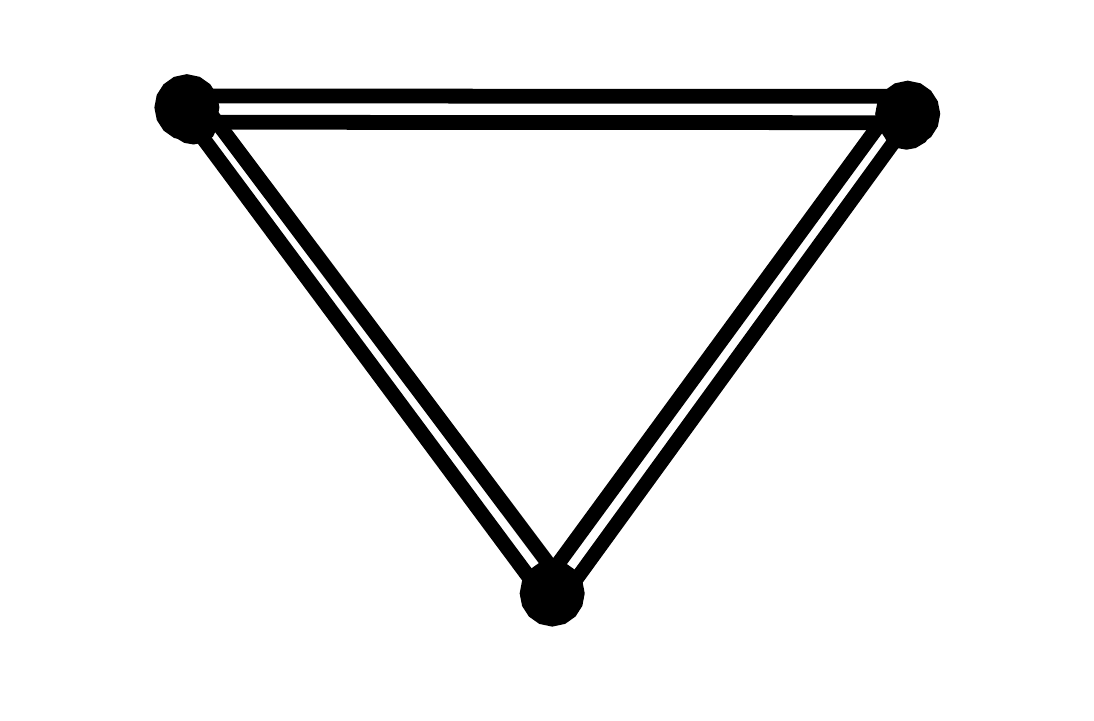}
			\put(17,40){\makebox(0,0)[cb]{$s_1$}}%
			\put(40,6){\makebox(0,0)[cb]{$s_2$}}%
			\put(85,40){\makebox(0,0)[cb]{$s_3$}}%
			
	\end{overpic}
	\caption{Coxeter diagrams for the groups in Example~\ref{ex:CoxeterGroups}. }
	\label{fig:Dynkin}
\end{figure}

In fact Coxeter groups fall into three categories according to the signature of a bilinear form associated with their Coxeter graphs and/or matrices. There are the finite ones acting on some $n$-sphere as $W_1$ in the example above, the infinite ones which are cocompact reflection groups of some euclidean space  just like $W_2$, and the rest. Among the third class are those which admit (not necessarily cocompact) actions on some hyperbolic space, as for example $W_3$ does. See for example \cite{Humphreys} or \cite{Davis} for further details. 

One of the classic problems in group theory is the word problem, namely the problem to decide algorithmically whether two words in the same generators represent the same group element. 
For Coxeter groups this problem has a solution due to Matsumoto \cite{Matsumoto}. Suppose two reduced words represent the same element of a Coxeter group. Matsumoto's theorem then states that the first can be transformed into the second by repeated application of so called \emph{braid moves}, that is by replacing substrings of the form $s_is_js_i\ldots$ and length $m_{i,j}$ by a string of the form $s_js_is_i\ldots$ of the same length. Such a braid move corresponds to the defining relation $(s_is_j)^{m_{i,j}}=1$ in the Coxeter group and plays a crucial role in many proofs and properties of Coxeter groups as we will see in the section on orientations.

\subsection{Turning Coxeter groups into geometric objects}\label{subsec:Coxetercomplex}

Every Coxeter system $(W,S)$ comes with a natural action on a simplicial complex $\Sigma=\Sigma(W,S)$, called the \emph{Coxeter complex}. This complex is always finite dimensional and its maximal simplices are in one-to one correspondence with the elements of $W$. Moreover, $\Sigma$ characterizes a lot of the nice geometric and combinatorial properties of Coxeter groups. As we will see, many algebraic questions can be studied by means of the geometry and combinatorics of this complex. 

Let $W$ be a reflection group acting on some euclidean space, for instance $W_2$ in Example~\ref{ex:CoxeterGroups}. An \emph{alcove} of the tiling induced by this action is (the closure of) a maximal connected component in the complement of the reflection hyperplanes. These are the small triangles in Figure~\ref{fig:CoxeterGroups}. Each of these triangles is a fundamental domain of the action of $W$.

The group itself acts simply transitively on the set of maximal cells of this subdivision. One may hence label them each with a unique element of the group. The cell labeled $\id$ is called \emph{fundamental alcove} and denoted by $\fa$. The hyperplanes bounding $\fa$ then correspond to the elements in $S$. This allows us to label the faces of the cell by their stabilizers in $W$. Here we identify the fundamental alcove $\fa$ with the trivial coset of the trivial group generated by the empty set. All other cells are then labeled by cosets of these stabilizers.  

More formally Coxeter complexes can be introduced as abstract simplicial complexes as follows. Fix a Coxeter system $(W,S)$. 
Subgroups $W_{S'}$ of $W$ generated by a subset $S'\subset S$ are called \emph{standard parabolic subgroup} of $W$. 
In case $S'=\emptyset$ the group $W_{S'}$ is trivial, if $S'=S$ one has that $W_{S'}=W$.  

\begin{definition}[Coxeter complex]\label{def:CoxeterComplex}
Denote by $\Sigma=\Sigma(W,S)$ the set of all left-cosets $xW_{S'}$ of standard parabolic subgroups in $W$. 
The set $\Sigma $ is partially ordered by reverse inclusion. That is, a coset $xW_{S'}$ is less than or equal to some other coset $yW_{S''}$ for some subsets $S', S''\subset S$ and elements $x,y\in W$ if and only if  $yW_{S''}$ is a subset of $xW_{S'}$. More formally 
\[xW_{S'}\leq yW_{S''} \Longleftrightarrow yW_{S''}\subset xW_{S'}.\]
The set $\Sigma$ carries the structure of an abstract simplicial complex and is called \emph{Coxeter complex} of the system $(W,S)$. 
\end{definition}

The maximal simplices in $\Sigma$ are the alcoves and their codimension one faces are called \emph{panels}. The vertices of $\Sigma$ are the cosets of maximal parabolic subgroups, that is subgroups $W'$ generated by subsets $S'=S\setminus\{s\}$ for a single element $s\in S$.  

Note that each panel $p$ corresponds to a coset of a parabolic subgroup of the form $xW_{\{s\}}$ for some $s\in S$. In this case we say $p$ has \emph{type} $s$ and write $\type(p)=s$. We may also define the type of a vertex to be the same as the type of the unique panel in any alcove not containing the vertex. 

Similarly to types of panels we introduce a type for each vertex $\lambda$ in $\Sigma$ by putting $\type(\lambda)=s$ if for an alcove $a$ containing $\lambda$ the codimension one face of $a$ that does not contain $\lambda$ also has type $s$. 
By construction types of panels (and vertices) are invariant under the natural left-action of $W$ on $\sigma$.  

It is worth mentioning that the colored dual graph of $\Sigma$, where one puts a vertex for each alcove and an edge of color $s$ between any pair that shares a panel of type $s$, is exactly the Cayley graph of $W$ with respect to the generating set $S$.  

\begin{remark}
	In the cases considered in this paper, the subdivision of the euclidean space into alcoves yields a geometric realization of the abstractly defined Coxeter complex. This is always true for affine Coxeter groups but does not hold in the general case. 
\end{remark}

We close this section with a small example. 

\begin{example}
	An example for a finite Coxeter complex can be found in Figure~\ref{fig:ComplexA2}. The edges are the alcoves, the vertices are the colored panels where blue represents the generator $s$ and red the generator $t$. 
	Figure~\ref{fig:ComplexA2tilde} shows the Coxeter complex of the infinite Coxeter group $W_2$ we had encountered above. Here alcoves are the triangles and edges the panels. The pictured complex is equivariently colored. The colors of the vertices of the alcove labeled $\id$ are chose such that the opposite panel corresponds to the fix-set of the reflection of the same color. That is, the blue vertex of $\id$ is opposite the codimension one side of $\id$ that is contained in the blue reflection hyperplane.  
\end{example}

\begin{figure}[htb]
	\begin{center}
		\begin{overpic}[width=0.5\textwidth]{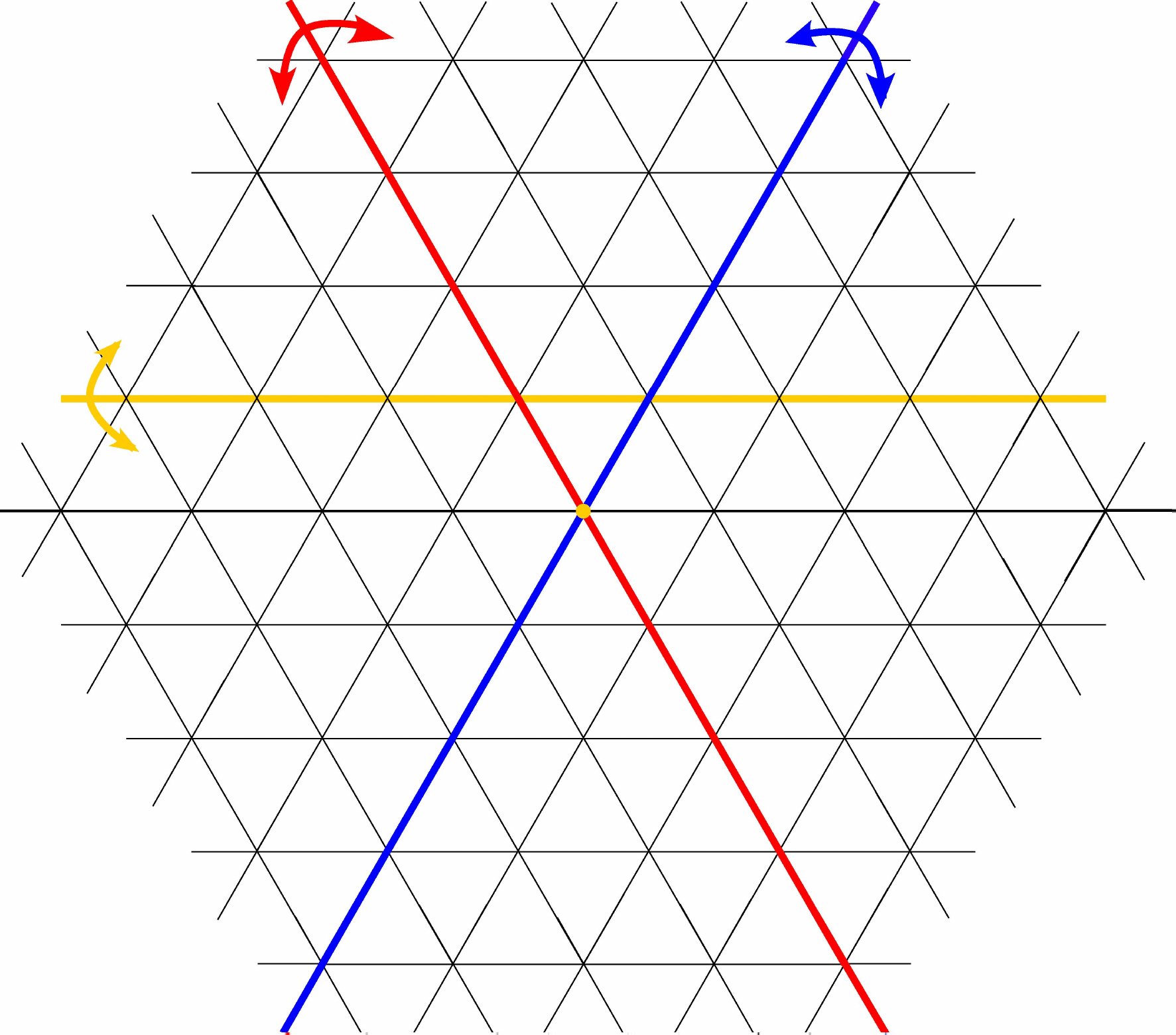}
			\put(48, 48){\tiny{$\id$}}
			\put(48, 57){\tiny{$r$}}
			\put(43, 46){\tiny{$t$}}
			\put(55, 46){\tiny{$s$}}
		\end{overpic}	
		\caption{Coxeter complex of $W_0\cong S_3$}
		\label{fig:ComplexA2tilde}
	\end{center}
\end{figure}

\subsection{Walking around in a Coxeter group}\label{subsec:galleries}

Think of each panel in $\Sigma$ of type $s$ as a door with the letter $s$ written on it. Start in the fundamental alcove $\fa$ labeled $\id$ and walk through a couple of doors remembering the letters written on them along the way. 
You may or may not repeat an alcove or return to the alcove where you started but what you do in any case is reading a word, that is a sequence of letters, in $S$. Each such word represents (not necessarily in a reduced way) an element in your group. 

So not only is every element $x\in W$ represented by an alcove $a_x$ in $\Sigma$ it is also represented by any pathway through doors (we call such a path gallery) that takes you from the fundamental alcove to $a_x$. That is a whole lot of galleries many of which (unnecessarily) go back and forth between adjacent alcoves or take huge detours. 

More formally, what one does is the following. 

\begin{definition}[Galleries]\label{def:gallery}
	A \emph{(combinatorial) gallery} in a Coxeter complex is a sequence
	\[\gamma=(c_0, p_1, c_1, \ldots, p_n, c_n)\]
	of alcoves $c_i$ and panels $p_i$ such that for all $i=1, \ldots, n$ the panel $p_i$ is contained in $c_{i-1}$ and $c_i$. 
	The \emph{length} of the gallery is $n+1$ and we will be calling $c_0$ the \emph{start (alcove)} and $c_n$ the \emph{end (alcove)} of $\gamma$.   
\end{definition}

All galleries in this paper will contain at least one alcove.

Depending on the application it is necessary to modify the definition of a combinatorial gallery and allow for smaller simplices in some places. One such variant we will be using in this survey is the extension a combinatorial gallery on one or both ends by a vertex, i.e. we will consider the following more general form of combinatorial galleries: 

	\[\gamma=(\lambda_0, c_0, p_1, c_1, \ldots, p_n, c_n, \lambda_n). \]

Here $\lambda_0$ is a vertex of $c_0$ and $\lambda_n$ one of $c_n$. In this case we say that $\gamma$ starts in $\lambda_0$ and ends in $\lambda_n$. 
This is for example needed in \cite{GaussentLittelmann, CSchwer} or \cite{Convexity} which will be addressed in Section~\ref{sec:final}. See also \cite{MST} Section 3.2 and Remark 3.13 therein. 

In the above definition nothing prevented us from choosing subsequent alcoves $c_{i-1}$ and $c_{i}$ to be equal. In fact this is critical and makes all the powerful applications possible.

\begin{definition}[Folds]\label{def:folds}
	 A gallery $\gamma=(c_0, p_1, c_1, \ldots, p_n, c_n)$ is \emph{folded at position $i$ (or panel $p_i$)} if $c_i = c_{i-1}$. We say $\gamma$ is \emph{folded} if there exists at least one $i$ such that $\gamma$ is folded at $i$. 
\end{definition}

Typically a (folded) gallery is illustrated by a directed, continuous path in the Coxeter complex that walks through the chambers and panels in the gallery. The direction is indicated by an arrow that points towards the end of the gallery. A bend touching a panel represents a fold at the respective panel and also emphasizes the repeated alcove.
Let's look at an example stolen from \cite[Example 4.4]{Shadows}. 
	
\begin{example}[Folded galleries]\label{ex:folded galleries}
	Figure~\ref{fig:folded galleries} shows two galleries in a type $\tilde{A}_2$ Coxeter complex. The gray gallery starts in the alcove labeled $a$ and ends in alcove $c$. It is not folded but also not minimal. The black gallery also starts in $a$ but ends in $b$. The first bit of the gallery (up to panel $p_4$) agrees with the gray one and both are of the same type. Try to verify this by coloring in the panels. The black gallery has two folds at panels $p_4$ and $p_7$ each of which is illustrated by the peaksy bend touching the respective panel. 
\end{example}

\begin{figure}[htb]
	\begin{overpic}[width=0.4\textwidth]{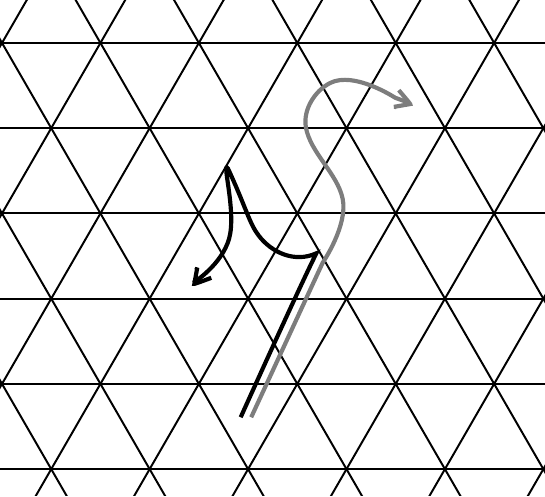}
		\put(45,10){$a$}
		\put(32,38){$b$}
		\put(68,70){$c$}
		\put(40,20){$p_1$}
		\put(45,30){$p_2$}
		\put(58,34){$p_3$}
		\put(62,40){$p_4$}
		\put(40,62){$p_7$}
	\end{overpic}
	
	\caption{This figure shows galleries in a type $\tilde A_2$ Coxeter complex with two folds (black) and no folds (gray). }	
	\label{fig:folded galleries}
\end{figure}

\subsection{The connection with words and elements in $S$ and $W$}\label{subsec:foldsandwords}

The (folded) galleries introduced above have a natural correspondence with (decorated) words in the generators $S$ of $W$, which will now be  explained.  

Recall that a gallery was a sequence $\gamma=(\lambda_0, c_0, p_1, c_1, \ldots, p_n, c_n, \lambda_n). $ 
Each panel $p_i$ in $\gamma$ has a type $s_i\in S$. Listing the types in order and taking the product in order yields a word in $S$, respectively an element in $W$. A word is \emph{reduced} if it is shortest possible among all words representing the same element.  
	
By a \emph{decorated word} we mean a word in $S$ where we put hats on some of its letters. Algebraically these decorations correspond to deletions of the generator in the word. However, for reasons we will discuss later, one wants to remember which letters have been deleted.

\begin{definition}[Type of a gallery]\label{def:gallery type}
	The \emph{type}, denoted by $\type(\gamma)$, of a gallery $\gamma$ of the form 
	\[\gamma=(c_0,  p_1,  c_1, \dots, p_n, c_n), 
	\] 
	is the word in $S$ obtained as follows:
	\[
	\type(\gamma)\define s_{j_1}s_{j_2}\dots s_{j_n},
	\]
	where $s_{j_i}=\type(p_i)$ is the type of the panel $p_i$ for $1 \leq i \leq n$.  
	
	The \emph{decorated type} is the decorated word $\dtype(\gamma)\define s_{j_1}\dots \hat{s}_{j_2} \dots s_{j_n}$ obtained as follows: the $s_{j_i} \in S$ are chosen as above and a hat is put on $s_{j_i}$ in case $c_{i-1}=c_i$ in $\gamma$.  
	By slight abuse of notation we call a letter with a hat a \emph{fold}. 
\end{definition}

Types and decorated types of galleries already carry a lot of information about the galleries themselves. So for example a gallery is a minimal gallery connecting two alcoves if and only if its type is reduced. In this case one also has that type and decorated type agree. Having $\tau=\hat\tau$ yields that the gallery is unfolded but not necessarily minimal. 
	
More properties are listed in the next proposition a proof of which may be found in \cite[Section 4]{Shadows}. 

\begin{prop}
	Let $(W,S)$ be a Coxeter system and fix an alcove $c_0$ in $\Sigma=\Sigma(W,S)$. Then the following hold: 
	\begin{enumerate}
		\item (Reduced) words in $S$ are in bijection with (minimal) unfolded galleries starting in $c_0$. 
		\item Decorated words are in bijection with all galleries starting in $c_0$.  
		\item The end-alcove of a gallery $\gamma$ of decorated type $\dtype(\gamma)$ is the alcove $w.c_0$ where $w$ is the element in $W$ obtained by $\dtype(\gamma)$ after deleting the letters with a hat. 
	\end{enumerate}
\end{prop}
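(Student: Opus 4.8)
The plan is to establish the three claimed bijections by exploiting the tight dictionary between galleries starting at a fixed alcove $c_0$ and the combinatorics of $W$ developed in the preceding subsections. The unifying idea is that reading off panel-types along a gallery, together with the fold-data encoded by the hats, records exactly the sequence of generators one ``walks through'' in the Cayley-graph picture mentioned after Definition~\ref{def:CoxeterComplex}. I would set up the correspondence as an explicit map in each direction and check they are mutually inverse.

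\textbf{Proof of (1).} First I would treat unfolded galleries. Given a word $w=s_{j_1}\cdots s_{j_n}$ in $S$, I build a gallery inductively starting at $c_0$: having reached an alcove $c_{i-1}$, let $p_i$ be the unique panel of $c_{i-1}$ of type $s_{j_i}$ and let $c_i$ be the alcove on the other side of $p_i$, so $c_i = s_{j_1}\cdots s_{j_i}.c_0$ after rewriting in terms of the $W$-action. Since I take the \emph{other} alcove at each step, this gallery is unfolded by Definition~\ref{def:folds}, and its type is $w$ by construction. Conversely an unfolded gallery starting at $c_0$ reads off a unique word via $\type(\gamma)$, and unfoldedness guarantees every step genuinely crosses a panel, so the two maps invert one another; this gives the bijection between words and unfolded galleries. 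For the parenthetical refinement, I would invoke the fact already recorded in the text after Definition~\ref{def:gallery type}, namely that a gallery is minimal (connecting $c_0$ to its endpoint) if and only if its type is reduced. Hence restricting the bijection to reduced words yields exactly the minimal galleries.

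\textbf{Proof of (2) and (3).} For (2) the same construction extends to decorated words: at step $i$, if the letter $s_{j_i}$ carries a hat I set $c_i = c_{i-1}$ (a fold at $p_i$), otherwise I cross the panel as before. Reading this backwards, an arbitrary gallery determines its decorated type $\dtype(\gamma)$ by Definition~\ref{def:gallery type}, with a hat placed precisely when $c_{i-1}=c_i$, and again the two assignments are mutually inverse, now with no unfoldedness constraint. Part (3) then follows by tracking the end-alcove through this construction: each unhatted letter $s_{j_i}$ advances the current alcove by the reflection in $p_i$, whereas each hatted letter leaves it unchanged. Therefore $c_n$ is obtained from $c_0$ by applying the product of the unhatted generators in order, i.e.\ $c_n = w.c_0$ where $w$ is the element of $W$ read off from $\dtype(\gamma)$ after deleting the hatted letters.

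\textbf{The main obstacle.} The routine inductive verifications are straightforward; the delicate point is justifying, in (1), that crossing the panel of type $s_{j_i}$ in alcove $c_{i-1}$ realizes the group action $c_i = s_{j_1}\cdots s_{j_i}.c_0$ \emph{consistently}, i.e.\ that the local ``cross to the other side'' operation agrees globally with left-multiplication in $W$. This rests on the identification of the colored dual graph of $\Sigma$ with the Cayley graph of $(W,S)$ noted in the text, together with the simple transitivity of the $W$-action on alcoves; once this identification is invoked, both directions of every bijection reduce to bookkeeping. I would therefore spend the bulk of the argument making this panel-crossing/multiplication correspondence precise and treat the remaining inductions as immediate.
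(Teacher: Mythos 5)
Your proof is correct and follows the standard argument: the paper itself does not spell out a proof but defers to \cite[Section 4]{Shadows}, and what you have written is exactly the expected construction there --- the inductive panel-crossing map, its inverse via $\type$ and $\dtype$, and the identification of ``cross the type-$s$ panel of $v.c_0$'' with right multiplication by $s$ (i.e.\ the Cayley-graph picture), which you rightly single out as the one point needing care. No gaps.
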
	

There are several ways to manipulate a folded gallery. In \cite{MST} we have made crucial use of the Littelmann root operators in \cite{GaussentLittelmann}. In Sections 6, 8.1, 8.3 and 9 of \cite{MST} we moreover introduced several methods to explicitly construct and manipulate galleries via extensions, conjugation or  concatenation. Ram \cite{Ram} as well as Parkinson, Ram and C. Schwer \cite{PRS} also discussed concatenations of folded galleries dressed as alcove walks. Kapovich and Millson studied the closely related Hecke paths and ways to construct them in \cite{KM}. 

Here are two first examples for how to gain new galleries from old ones: 

\begin{itemize}
	\item \textbf{W-action:} The simplicial left-action of $W$ on $\Sigma$ induces an action on the set of all galleries by simultaneously multiplying all alcoves and panels in a gallery on the left. 
	\item \textbf{Explicit (un-) foldings:} Every panel $p_i$ is contained in a unique reflection hyperplane $H_i$ of the complex $\Sigma$. Denote by $r_i$ the reflection along $H_i$ in $W$. One may fold (or unfold) a given gallery $\gamma$ at the panel $p_i$ by reflecting the subgallery $(c_i, p_{i+1},\ldots, p_n, c_n)$ along $H_i$, that is taking its image under $r_i$ according to the just defined $W$ action.     
\end{itemize}

The black gallery in Example~\ref{ex:folded galleries} is obtained from the gray gallery by folding along panel $p_4$ and $p_7$. 

Surprisingly, showing existence or counting the number of folded galleries satisfying some extra conditions (being positively folded or of maximal dimension, notions that will be introduced later) has many interesting algebraic and geometric consequences. I first came across them when proving the buildings-analog of classical Kostant convexity, see \cite{Convexity, Convexity2}. But I am jumping ahead. Let's first talk about those extra conditions in the next section.


\section{Orientations and Shadows}\label{sec:orientation}

Polaris is used in the Northern Hemisphere to orient yourself and to determine which way to turn in order to walk in the direction of north. 
Orientations on Coxeter complexes serve a similar purpose. They are like north stars showing us which fold or crossing faces "north" or "south". 

\subsection{Finding orientation(s)}

Formally an orientation is defined as follows. 

\begin{definition}[Orientations]\label{def:orientation}
	An \emph{orientation} $\phi$ of $\Sigma$ is a map which assigns to a pair of a panel $p$ and an alcove $c$ containing $p$ a value in $\{+1, -1\}$.  We say that $c$ is on the \emph{$\phi$-positive side} (respectively the \emph{$\phi$-negative side}) of $p$ if $\phi(p,c)=+1$ (respectively -1).
\end{definition}

One way to produce an orientation is to take the map $\phi$ to be a constant map which is either $\equiv +1$ or $\equiv -1$. We will refer to these orientations as the \emph{trivial positive/negative orientation}. 


We now define two natural and more interesting classes of orientations. 

\begin{definition}[Alcove orientations] \label{def:alcove orientation}
	Let $c$ be a fixed alcove in $\Sigma$. For any alcove $d$ and any panel $p$ in $d$, let $\phi_{c}(p,d)$ be $+1$ if and only if $d$ and $c$ lie on the same side of the wall spanned by $p$. The resulting orientation $\phi_{c}$ is called the \emph{alcove  orientation towards $c$} or short the \emph{$c$--orientation}. 
\end{definition}

\begin{example}[Alcove orientations]
	Figure~\ref{fig:simplex orientations} shows the alcove orientation towards the alcove labeled $c$ on a type $A_2$ Coxeter complex. In the picture $\phi_c(p,d)=+1$ is indicated with a little "+" on the alcove $d$ close to panel $p$. In this small example hyperplanes are pairs of diametrical vertices. A pair of a panel and alcove containing it is assigned $+1$ (indicated by $+$ in the figure) if the alcove lies on the same side of the hyperplane as $c$ and a $-$ otherwise.  
\end{example}

\begin{figure}[h]
	\begin{minipage}[c]{0.4\textwidth}
		\begin{tikzpicture}
			\def \n {6}
			\def \radius {1.7cm}
			\def \margin {8} 
			
			\draw circle (\radius);
			\node[draw=none, fill=none, inner sep=0pt, minimum width=6pt] at ({360/\n * 2.5}:{\radius+\margin}) {$c$};

			\foreach \s in {1,...,\n}
			{ 
				\node[circle, draw, fill=black!100, inner sep=0pt, minimum width=4pt] at ({360/\n * (\s - 1)}:\radius) {};
			}
			
			\foreach \s in {1,...,3}
			{ 
				\node[draw=none, fill=none] at ({360/\n * (\s - 1)+\margin}:{\radius+\margin}) {+};
				\node[draw=none, fill=none] at ({360/\n * (3+\s - 1)-\margin}:{\radius+\margin}) {+};
				
				\node[draw=none, fill=none] at ({360/\n * (\s - 1)-\margin}:{\radius+\margin}) {--};
				\node[draw=none, fill=none] at ({360/\n * (3+\s - 1)+\margin}:{\radius+\margin}) {--};
			}
		\end{tikzpicture}	
	\end{minipage}
	\caption{The alcove orientation with respect to $c$ on the type $A_2$ Coxeter complex. }
	\label{fig:simplex orientations}
\end{figure}
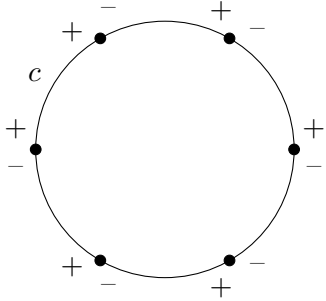

Alcove orientations are wall consistent, that is for all the panels in a same wall and chambers containing the panels on a same side of the wall, the value of the orientation agrees. So for wall consistent orientations each side of a wall is either positive or negative. 

In the next subsection we are leaving the general framework and introduce an orientation available on affine Coxeter complexes only. This orientation is determined by a choice of a chamber at infinity.

\subsection{Orientation far away} 

In this section we restrict to the affine case.  That is, the Coxeter groups in question are affine reflection groups, that is subgroups of the isometry group of a real affine space generated by affine reflections. An affine reflection of $\R^n$ is a reflection at some affine hyperplane preserving the euclidian metric. Throughout this paper an affine Coxeter group is an affine reflection group $W \leq \mathrm{Isom}(\R^n)$ that acts cocompactly with a simplicial fundamental domain. Such a $W$ has a Coxeter generating set $S$ which corresponds to reflections along the hyperplanes spaned by the codimension one faces of its fundamental domain. 
    
To each affine Coxeter group $\aW$  is attached a spherical Coxeter group $\sW$.  Hereby $\sW$ may either be seen as the induced group action on the boundary sphere, as illustrated in Figure~\ref{fig:W-W0} at infinity of $\Sigma$, or as the stabilizer of any special vertex in $\Sigma$. In fact, a vertex is special if and only if its stabilizer is canocially isomorphic to the group acting on the boundary sphere. Some special vertices are shown in bold black in that figure. 

Moreover $\aW$ splits as a semi-direct product of $\sW$ and a translation group $T$ isomorphic to $\Z^n$, for the same $n$ as the dimension of the geometric realization of $\Sigma$. This translation subgroup $T$ has a very natural definition in terms of root systems. See \cite{Humphreys} for further details. 

The identification of $\sW$ with a subgroup of $\aW$ is not
unique. Conjugation by elements of $T$ gives an infinite family of such subgroups.

\begin{figure}[htb]
	\begin{tikzpicture}[>=stealth]
		\draw (0,0) node{
			\begin{tikzpicture}[baseline=2.5cm,scale=.8]
				\draw[draw=none,fill=white!80!blue] (3,4)--(4,4)--(4,3)--(3,4);
				\foreach \x in {1,...,5}{
					\draw (\x,1.8)--(\x,6.2);
					\draw (.8,\x+1)--(5.2,\x+1);
				}
				\foreach \x in {1,...,3}{
					\draw (.8,2*\x+.2)--(2*\x-.8,1.8);
					\draw (.8,2*\x-.2)--(7.2-2*\x,6.2);
				}
				\foreach \x in {1,...,2}{
					\draw (2*\x+.8,6.2)--(5.2,2*\x+1.8);
					\draw (5.2,6.2-2*\x)--(2*\x+.8,1.8);
				}
				\draw[very thick] (.5,6.5)--(5.5,1.5);
				\draw[very thick] (0,4)--(6,4);
				\draw (5,4) node[circle,inner sep=2, fill=black] {};
				\draw (5,6) node[circle,inner sep=2, fill=black] {};
				\draw (5,2) node[circle,inner sep=2, fill=black] {};
				\draw (3,6) node[circle,inner sep=2, fill=black] {};
				\draw (3,4) node[circle,inner sep=2, fill=black] {};
				\draw (3,2) node[circle,inner sep=2, fill=black] {};
				\draw (1,2) node[circle,inner sep=2, fill=black] {};
				\draw (1,4) node[circle,inner sep=2, fill=black] {};
				\draw (1,6) node[circle,inner sep=2, fill=black] {};
			\end{tikzpicture}
		};
		\draw (0,0) circle (4cm);
		\foreach \x in {1,...,8}{
			\draw (45*\x:4cm) node[circle,inner sep=2, fill=black] {}; 
			\draw[dashed] (45*\x:3.25cm)--(45*\x:4.5cm);
		}
		\draw (0.3,0.24) node[inner sep=1,left] {$\lambda$};
		\draw[dashed,->] (.8,-.24) node[inner sep=1,left] {$x$} --
		 (-20:4.5cm) node[below right,inner sep=1] {$p(x)$}; 
	\end{tikzpicture}
	\caption{Elements of $\aW$ can be put in bijection with alcoves in
		tessellated plane and the orbit of $x$ under translations in $\aW$
		is illustrated with the fat vertices of these alcoves.  Elements
		of $\sW$ are in bijection with maximal simplices on the boundary sphere.
		The projection map $p$ maps the element $x$, represented by an alcove (in this picture also labeled $x$) to a chamber labeled $p(x)$ at
		infinity that points ``in the same direction''. This figure is taken from \cite{reflectionlength} where more details are provided.}
	\label{fig:W-W0}
	\label{fig:projection}
\end{figure}
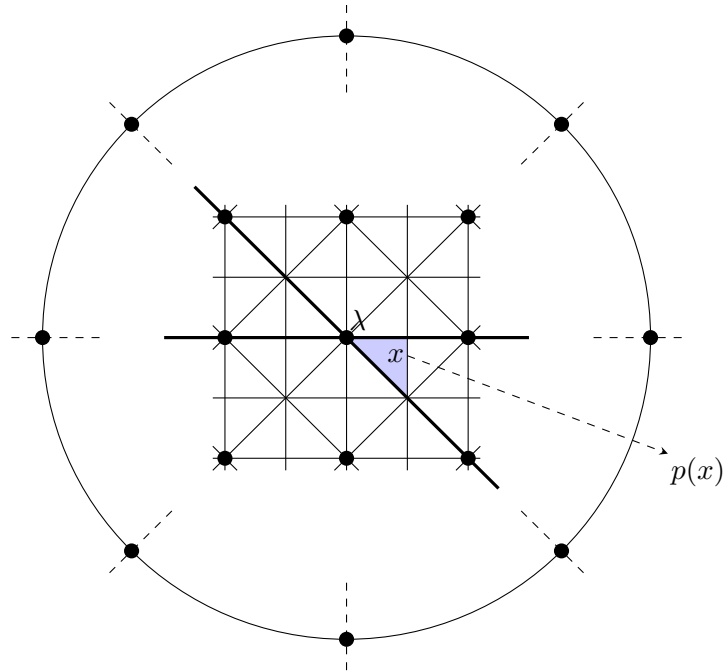

	As we have already explained in Section~\ref{sec:CoxeterGroups} the choice of a vertex $\lambda_0$ as origin and a fundamental alcove $\fa$ having $\lambda_0$ as a vertex determines a bijection between elements of $W$ and alcoves. Here the fundamental alcove $\fa$ corresponds to the identity and every element $x\in W$ corresponds to the image of
	$\fa$ under $x$.
	
\begin{remark}
The visual boundary of $\Sigma$ is a sphere whose simplicial structure is induced by the
tessellation of the plane. A parallelism class of hyperplanes in $\Sigma$
corresponds to a reflection hyperplane, i.e., a a great circle, in the
sphere $\partial\Sigma$. These hyperplanes at infinity induce a simplicial tiling of the sphere. The connected components in the complement of the hyperplanes in $\partial\Sigma$ are called \emph{chambers}.  
Each chamber corresponds to a parallel  class of simplicial cones in $\Sigma$ called  \emph{Weyl chambers}. The 
chambers in $\partial\Sigma$ as well as the alcoves (and Weyl chambers) in $\Sigma$ based at a special vertex are in bijection with the elements of the spherical Weyl group $\sW$. These bijections can be chosen in a compatible way so that the chamber labeled $\id$ in $\partial \Sigma$ corresponds to the \emph{fundamental Weyl chamber $\Cf$} based at the origin containing the fundamental alcove $\fa$ labeled $\id$ in $\Sigma$. 

This arises naturally from the following interpretation of the elements $w\in \aW$. Such a $w$ is an isometric automorphism of $\Sigma$ that
obviously maps parallel rays to parallel rays. Hence it induces an
isometry of (the tessellation of) $\partial\Sigma$ where translations on $\Sigma$
induce the identity on $\partial\Sigma$.

This is indicated by the dotted arrow in
Figure~\ref{fig:projection}. One can think of this as ``walking to
infinity in the direction of $x$ at $\lambda$''. 
\end{remark}

One can use the chambers at infinity to define orientations on an affine Coxeter complex
The following definition is equivalent to the one provided via induced orientations in \cite{Shadows}. This is the viewpoint we had taken in \cite{MST}.

\begin{definition}[Weyl chamber orientations]\label{def:Weyl chamber orientation}
	Suppose $\Sigma$ is an affine Coxeter complex with boundary $\partial\Sigma$ and fix a chamber $\sigma \in \partial\Sigma$. 
	For any alcove $c$ and any panel $p\subset c$ in $\Sigma$, let $H$ be the affine hyperplane containing $p$. Define $\phi_\sigma(p,c)$ to be $+1$ if $\sigma$ has a representative cone $C_\sigma\subset\Sigma$ which lies on the same side of $H$ as $c$. The resulting orientation  $\phi_\sigma$ is called \emph{Weyl chamber orientation with respect to $\sigma$}.  
\end{definition}

Weyl chamber orientations have many nice properties. For one they are wall consistent. On the other hand they are braid-invariant, which means that a positively folded gallery remains positively folded in case a substring is replaced by a substring equivalent to it under braid moves. We'll soon see that these orientations also naturally arise in the setting of affine (Bruhat-Tits) buildings. 
  
Now that we have well behaved orientations in place one may talk about galleries being well behaved with respect to a given such orientation. This is the focus of the next subsection.

\subsection{Shadows in Coxeter complexes} 

Folded galleries have the property that some of the alcoves they visit along the way are repeated along some panel. Fixing an orientation one can decide whether with respect to that orientation a repeated alcove $c_i=c_{i-1}$ is on the positive or negative side of the shared panel $p_i$. According to the value of the orientation on that pair such a fold is called \emph{positive}, respectively \emph{negative}. 

\begin{definition}[Positively folded]\label{def:positive fold}
	Let $\phi$ be an orientation. A combinatorial gallery $\gamma=(\lambda_0, c_0, p_1, c_1, \ldots, p_n, c_n, \lambda_n)$ is \emph{$\phi$-positively (negatively) folded} if for all $i$ either $c_{i-1}\neq c_i$ or $\phi(p_i,c_i)=+1$ (respectively, $-1$). 
\end{definition}

While a combinatorial gallery of length $n$ may have $n$ folds the number of positive folds is uniformly bounded above. For a gallery of arbitrary length in an affine Coxeter complex the uniform bound is equal to the length of the longest element in the associated spherical Weyl group. See \cite{GaussentLittelmann, Shadows} or \cite{MST} for a proof. 

In order to gain a better understanding of the behavior of positively folded galleries we consider their shadows with respect to a fixed orientation. This notion was first introduced in \cite{Shadows} and studied further in \cite{MNST}. 
	
Let $\phi$ be a braid-invariant orientation of $(W,S)$, fix $x\in W$ and let $w$ be any reduced word in $S$ such that $[w]=x$. 
One says that \emph{$x$ $\phi$-positively folds onto $y\in W$} if there exists a gallery $\gamma$ of type $w$ starting in $\fa$ and ending in $y$ that is positively folded with respect to $\phi$. This is written as $x\pfold{\phi} y$.

\begin{definition}[Shadows]\label{def:braid:invariant}
	With $\phi,x,w$ as chosen above the \emph{Shadow of $x$ with respect to $\phi$} is given by
	\[\Shadow_\phi(x)=\{y\in W\mvert x\pfold{\phi} y\}.\]
\end{definition}
 
Shadows as defined here are well-defined for any braid-invariant orientation. In \cite{Shadows} we introduce shadows of words which may also be considered for orientations that are not braid-invariant. 

We will now prove that shadows are a natural generalization of intervals in Bruhat order.  

Bruhat order can be defined by the following subword property. See for example \cite[Thm 2.2.2]{BjoernerBrenti}.  Let $w=s_1s_2\ldots s_n$ be a reduced expression for $x=[w]$ and let $y\in W$. Then an element $y$ is less than or equal to some $x$ in Bruhat order if and only if the word $w$ representing  $x$ has a subword that is a reduced word for $y$. 
We include the proposition that makes this connection precise and also provide its proof, which is directly taken from \cite[Prop.6.8]{Shadows}. 

\begin{prop}[Bruhat order and shadows]\label{prop:bruhat_by_folding}
	Let $\phi_+$ be the trivial positive orientation and let $\phi_\id$ be the alcove orientation towards $\id$.  For any pair of elements $x,y \in W$ one has 
	\[
	x\geq y \; \Longleftrightarrow \; x \pfold{\phi_+} y \; \Longleftrightarrow\;  x \pfold{\phi_\id} y . 
	\]
	In particular 
	\[\Shadow_{\phi_+}(x)=\Shadow_{\phi_\id}(x)=[\id,x].\] 
\end{prop}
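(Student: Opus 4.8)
The plan is to reduce all three conditions to a single computation: for a fixed reduced word $w=s_1\cdots s_n$ with $[w]=x$, I want to identify the shadow $\Shadow_\phi(x)$ with the set of end-elements of all $\phi$-positively folded galleries of type $w$ starting in $\fa$, and then show this set equals the Bruhat interval $[\id,x]$ for each of the two orientations. By the proposition on types and decorated types, a gallery of type $w$ from $\fa$ is determined by its set of fold positions $F\subseteq\{1,\dots,n\}$, and its end-alcove is the element obtained from $w$ by deleting the letters indexed by $F$. Hence the end-elements of galleries of type $w$ range exactly over the elements $[u]$ represented by subwords $u$ of $w$.

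For $\phi_+$ this is immediate: since $\phi_+\equiv +1$ every fold is positive, so \emph{every} gallery of type $w$ is $\phi_+$-positively folded, and $\Shadow_{\phi_+}(x)$ is the set of all $[u]$ for $u$ a subword of $w$. I would then invoke the subword characterization of Bruhat order quoted above, together with the fact that any (possibly non-reduced) subword can be shortened, by further deletions, to a reduced word for the same element that is still a subword of $w$. This yields $[u]\le x$ for every subword $u$, and conversely every $y\le x$ has a reduced expression which is a subword of $w$; therefore $\Shadow_{\phi_+}(x)=[\id,x]$.

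For $\phi_\id$ I would first translate the positivity condition into length data. At step $i$ the current alcove is the element $v_{i-1}$ (the product of the not-yet-folded letters), the crossing alcove is $v_{i-1}s_i$, and these lie on opposite sides of the wall $H_i$ through $p_i$; the fold, which keeps us at $v_{i-1}$, is $\phi_\id$-positive exactly when $v_{i-1}$ lies on the same side of $H_i$ as $\fa$, which by the standard length criterion means $\ell(v_{i-1}s_i)>\ell(v_{i-1})$. So a fold is admissible precisely when the omitted crossing would have increased length, whereas crossings are always admissible. I would then induct on $n=\ell(x)$. Setting $x'=[s_1\cdots s_{n-1}]$, the subword splitting gives $[\id,x]=[\id,x']\cup\{zs_n : z\in[\id,x']\}$, and by induction $[\id,x']=\Shadow_{\phi_\id}(x')$. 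Appending one step to a positively folded gallery ending at $z\le x'$ reaches either $zs_n$ (by crossing, always allowed) or $z$ (by folding, allowed iff $zs_n>z$), which at once gives $\Shadow_{\phi_\id}(x)\subseteq[\id,x]$; for the reverse inclusion I would run through the cases $y=zs_n$ with $z\le x'$, then $y\le x'$ with $ys_n>y$, then $y\le x'$ with $ys_n<y$.

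The main obstacle is exactly the last of these cases. Given $y\le x$, a naively chosen reduced subword of $w$ realizing $y$ may demand negative folds, so one cannot simply fold at every deleted position. The resolution is the crossing/folding dichotomy: if $ys_n<y$ put $z=ys_n$; then $z<y\le x'$ so $z\in[\id,x']$, while $zs_n=y>z$, and hence $y$ is reached from a positively folded gallery to $z$ by \emph{crossing} at the final step instead of folding. Replacing a would-be negative fold by a legitimate crossing from the length-lower neighbour is the one genuinely delicate point, and it is what makes the induction close and completes $\Shadow_{\phi_\id}(x)=[\id,x]$.
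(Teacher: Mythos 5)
Your argument is correct, but for the substantive half of the statement it takes a genuinely different route from the paper. The $\phi_+$ part (every gallery of type $w$ is trivially positively folded, so the shadow is the set of elements of subwords of $w$, which is $[\id,x]$ by the subword property plus the deletion condition) coincides with the paper's treatment of the first equivalence. For the $\phi_\id$ part the paper instead proves the implication $x \pfold{\phi_+} y \Rightarrow x \pfold{\phi_\id} y$ directly by a global extremal argument: among all fold-sets $I\subseteq\{1,\dots,n\}$ whose associated gallery $\gamma_w^I$ ends at $c_y$, it picks one minimizing $\sum I$ and shows it must be $\phi_\id$-positive, since a negative fold at position $i$ could be traded for a fold at the first earlier position $j$ whose panel lies in the same hyperplane (using that a gallery issuing from $\id$ always crosses a hyperplane from the $\phi_\id$-positive to the negative side first), contradicting minimality. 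You instead translate $\phi_\id$-positivity of a fold into the length condition $\ell(v_{i-1}s_i)>\ell(v_{i-1})$ and run an induction on $\ell(x)$ that peels off the last letter, matching the one-step recursion for $\Shadow_{\phi_\id}$ against the lifting property $[\id,x]=[\id,xs_n]\cup[\id,xs_n]s_n$; your ``replace a forbidden negative fold by a crossing from the lower neighbour $z=ys_n$'' step is exactly what closes the recursion, and it is sound. What each approach buys: the paper's exchange argument transforms a given $\phi_+$-positive fold-set into a $\phi_\id$-positive one with the same endpoint, so it proves the gallery-level implication directly and generalizes to the swap arguments used elsewhere for shadows; your induction is more elementary, makes the length/descent combinatorics explicit, and is in effect a special case of the recursive description of shadows in Theorem~\ref{thm:regular_shadow} (as the paper itself hints after that theorem), though it establishes the two implications only indirectly, by showing both shadows equal $[\id,x]$.
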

\begin{proof}
	The first equivalence is a direct consequence of the defining subword property: reduced expressions are in bijection with minimal galleries and a subword corresponds to a decorated word where there are hats on all of the letters missing in the subword. Such a decorated word corresponds to a folded gallery all of whose folds are automatically positive with respect to the trivial positive orientation $\phi_+$. 
	
	It is also easy to see that $x \pfold{\phi_\id} y$ implies $\; x \pfold{\phi_+} y$, since any $\phi_\id$-positive folding of some gallery is also $\phi_+$-positive.
		
	To show that $x \pfold{\phi_+} y$ implies $x \pfold{\phi_\id} y$ let $w$ be a reduced expression for $x$, $\gamma$ a minimal galelry of type $w$, and let $n = \ell(x)$. For $I \subset \{1, \ldots, n\}$ write $\gamma_w^I$ for the gallery obtained by folding $\gamma$ at all panels with index $i\in I$.  Among all subsets $I$ such that $\gamma_w^I$ ends at $c_y$, choose $I$ such that the sum of its elements is minimal. This ensures that $\gamma_w^I$ is $\phi_\id$-positively folded, because if $\gamma_w^I$ were not positively folded at $i \in I$, we could replace $i$ with some smaller value $j$. Specifically let $j$ be the first index such that the $i$-th and $j$-th panels of $\gamma_w^I$ lie in the same hyperplane. Then for $J$ the symmetric difference of $I$ and $\{i,j\}$
	we have $\gamma_w^J$ ending at $c_y$. Moreover $\sum(J) < \sum(I)$ because $j < i$, since every gallery from $\id$ crosses all hyperplanes from the $\phi_\id$--positive to the $\phi_\id$--negative side first. 
\end{proof}

Compare this proposition also with \cite[Lemma 2.2.1]{BjoernerBrenti}.
I'd like to emphasize that the proposition does not imply that every $\phi_+$-positively folded gallery is also $\phi_\id$-positively folded. Counterexamples can easily be constructed. 

Let us have a look at some examples of shadows. 

\begin{example}\label{ex:BruhatOrder}
	The shaded alcoves in the picture on the left of Figure~\ref{fig:BruhatOrder} are the elements of the shadow of $x$ with respect to the trivial positive orientation on a type $\tilde A_2$ Coxeter complex. By the previous proposition this is the same as the Bruhat interval $[\id, x]$ and also the same as $\Shadow_\id(x)=\Shadow_{\phi_+}(x)$.  

	The picture on the right in the same figure shows the shadow of the outlined element by an orientation at infinity. Here the chamber at infinity defining the orientation $\phi$ is indicated by the arrow pointing to a chamber at infinity of the type $\tilde A_2$ Coxeter complex. 
	For more details (also explaining the light and dark gray coloring) see \cite{Shadows}.  
\end{example}

\begin{figure}[h]
	\begin{minipage}[l]{0.45\textwidth}
		\begin{overpic}[width=0.9\textwidth]{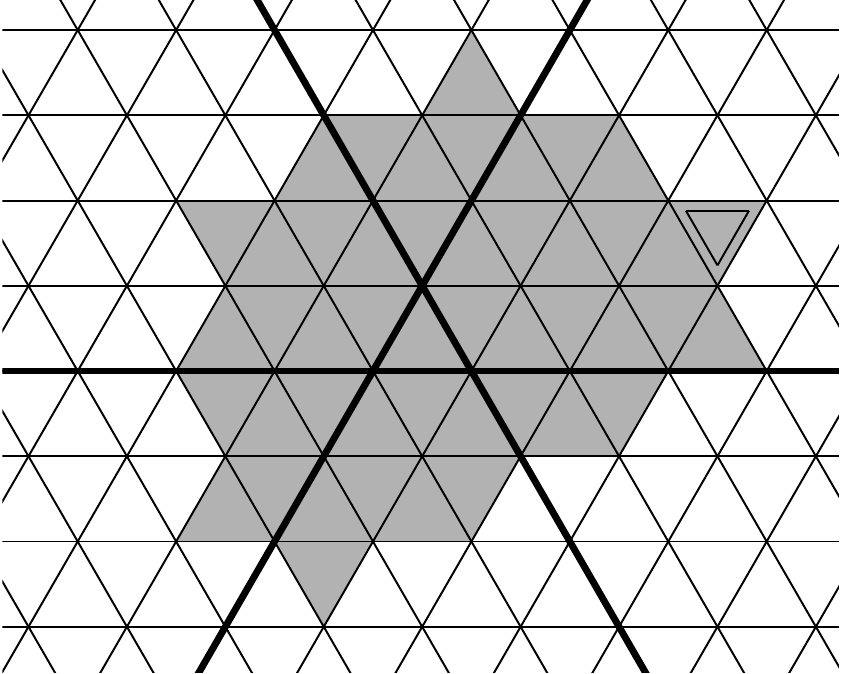}
			\put(50,37){\makebox(0,0)[cb]{$\id$}}%
			\put(85,50){\makebox(0,0)[cb]{$x$}}%
		\end{overpic}
	\end{minipage}
	\begin{minipage}[r]{0.45\textwidth}
		\includegraphics[width=0.9\textwidth, angle=180]{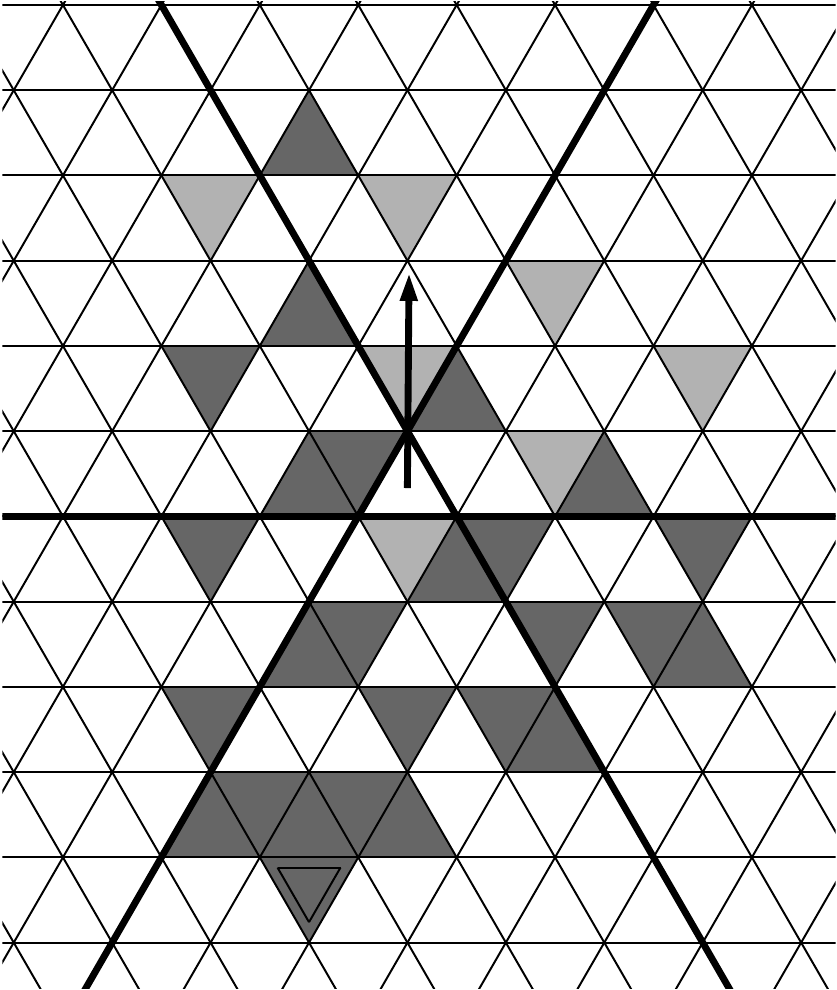}
	\end{minipage}
	\caption{The picture shows the shadow $\Shadow_{\phi_+}(x)$ (on the left) with respect to the trivial positive orientation $\phi_+$ and the shadows $\Shadow_{\phi}(x)$ of an orientation at infinity (on the right) in type $\tilde A_2$. For more details see Example~\ref{ex:BruhatOrder}. }
	\label{fig:BruhatOrder}
\end{figure}

I have mentioned earlier that it is sometimes useful to vary the notion of a combinatorial gallery a bit. The same holds true for shadows. 

\begin{remark}[Vertex-to-vertex galleries and vertex shadows]
	One natural modification of the notion of a shadows is obtained as follows. Instead of combinatorial galleries from $\fa$ to some alcove $a_x$ consider combinatorial galleries of the following shape
		\[\gamma=(\lambda_0, c_0, p_1, c_1, \ldots, p_n, c_n, \lambda_n)\]
	connecting the base vertex $\lambda_0$ with some other vertex $\lambda_n$ in the Coxeter complex. 
	Then one may look at the vertex shadows of such a gallery, respectively of the vertex $\lambda_n$. 
	The shadow of a vertex-to-vertex gallery with respect to a fixed orientation $\phi$ is the set
	$\Shadow_{\phi}^\vee$ of all vertices $\lambda \in\Sigma$ such that $\lambda$  is the end-vertex of a $\phi$-positively folded gallery of the same type as $\gamma$ that also starts in $\lambda_0$. 
	Typically $\lambda_0$ will be taken equal to the origin and $\lambda_n$ is (depending on the application) either a vertex in the co-root lattice or in the weight-lattice in $\Sigma$. 

	These vertex shadows are closely linked to the Littelmann path model and the convexity theorem we'll be talking about in Sections~\ref{sec:Littelmann} and \ref{sec:Kostant} below. 
\end{remark}

Maybe the most natural question at hand is to decide which $y$ are in $\Shadow_{\phi}(x)$ for a given $\phi$ and $x\in W$. Unfortunately there are no closed formulas available for these sets, yet. 
The best we can do is provide recursive descriptions of shadows similar to the recursive descriptions of Bruhat order. Here is a first example for such a recursive result on shadows. Compare \cite{Shadows} or \cite{MNST} for more results and proofs.

Note that the condition $\v_\varphi(s) < 0$ (resp. $>0$) in the next theorem simply means that the alcove corresponding to $s$ in $\Sigma$ is on the negative (resp. positive)  side of the hyperplane separating $s$ from $\id$. When one replaces the Weyl chamber orientation by the trivial positive orientation this result reduces to a recursive description of Bruhat order intervals which can be proven similarly. 

\begin{thm}[Recursive computation of shadows, {\cite[Thm.7.1]{Shadows}}]
	\label{thm:regular_shadow}
	Let $\varphi$ be a Weyl chamber orientation on $\Sigma=(\aW, S)$. Then for all $x \in \aW$ and $s \in S$ the following holds.
	\begin{enumerate}[label=(\roman*)]
		\item\label{item:DR} If $s$ is such that $l(xs)<l(x)$, then 
		\[
		\Shadow_\varphi(x) = \Shadow_\varphi(xs) \cdot s \cup \{z \in \Shadow_\varphi(xs): \v_\varphi(zs) < \v_\varphi(z)\}.
		\]
		\item\label{item:DL} If $s$ is such that $l(sx)<l(x)$, then
		\[
		\Shadow_\varphi(x) = \left\{\begin{array}{ll} 
			s \cdot \Shadow_{s\varphi}(sx) \cup \Shadow_\varphi(sx) & \text{ if } \v_\varphi(s) < 0  \\	
			s \cdot \Shadow_{s\varphi}(sx) & \text{ if } \v_\varphi(s) > 0.  
		\end{array} \right.
		\]
	\end{enumerate}
\end{thm}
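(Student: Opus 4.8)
The plan is to establish both recursions by peeling off a single generator from a reduced word for $x$ and classifying combinatorial galleries according to what happens at the corresponding end panel. I fix a reduced word $w$ for $x$ and use two standing facts. First, since $\varphi$ is a Weyl chamber orientation it is braid-invariant, so by the remarks following \Cref{def:braid:invariant} the shadow $\Shadow_\varphi(x)$ may be computed using \emph{any} reduced word for $x$, and the shadows of $xs$ and $sx$ on the right-hand sides are likewise well-defined. Second, the alcove adjacent to one labelled $z$ across its panel of type $s$ is labelled $zs$, and a gallery is positively folded at a panel $p\subset c$ exactly when $\varphi(p,c)=+1$.

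For part \ref{item:DR} I write $w=w's$ with $w'$ a reduced word for $xs$, which is possible because $\ell(xs)<\ell(x)$. Any gallery $\gamma$ of type $w$ starting in $\fa$ decomposes uniquely as an initial gallery $\gamma'$ of type $w'$ followed by one step at the final panel $p_n$ of type $s$, and $\gamma$ is $\varphi$-positively folded if and only if $\gamma'$ is and the final step is admissible. Writing $z$ for the end-alcove of $\gamma'$, the element $z$ ranges exactly over $\Shadow_\varphi(xs)$. The final step either crosses $p_n$, which ends the gallery at $zs$ and is always permitted, contributing $\Shadow_\varphi(xs)\cdot s$; or it folds at $p_n$, which ends at $z$ and is permitted precisely when the fold is positive, that is when $z$ lies on the $\varphi$-positive side of the wall through $p_n$. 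By the note preceding the theorem this last condition is $\v_\varphi(zs)<\v_\varphi(z)$, and the union of the two cases is exactly the asserted formula.

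For part \ref{item:DL} I instead write $w=sw'$ with $w'$ a reduced word for $sx$ and examine the first panel $p_1$, which has type $s$ and separates $\fa=\id$ from the alcove labelled $s$. A gallery of type $w$ based at $\fa$ either folds at $p_1$, keeping $c_1=\id$, or crosses it, with $c_1=s$. In the folding case the fold is positive exactly when $\id$ lies on the $\varphi$-positive side of the wall through $p_1$, which by the note is the condition $\v_\varphi(s)<0$; the tail is then a $\varphi$-positively folded gallery of type $w'$ based at $\id$, so its end-alcove ranges over $\Shadow_\varphi(sx)$. Thus the folding case contributes $\Shadow_\varphi(sx)$ when $\v_\varphi(s)<0$ and nothing when $\v_\varphi(s)>0$, which accounts for the difference between the two branches. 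In the crossing case the tail is a $\varphi$-positively folded gallery of type $w'$ based at the alcove $s$ rather than at $\fa$; applying the simplicial left-action of $s$ moves its base to $\id$, sends the orientation $\varphi=\phi_\sigma$ to $\phi_{s\sigma}=s\varphi$, and carries $\varphi$-positive folds to $s\varphi$-positive folds. Hence the image is an $s\varphi$-positively folded gallery of type $w'$ based at $\id$ ending at $sy$, so $sy$ ranges over $\Shadow_{s\varphi}(sx)$ and the crossing case contributes $s\cdot\Shadow_{s\varphi}(sx)$. Combining the two cases yields the formula in both branches.

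The bookkeeping that each decomposition is a genuine bijection, and that positivity of the folds at the untouched panels is unaffected by what happens at the distinguished panel, is immediate from the locality of the positive-fold condition in \Cref{def:positive fold}. The step that genuinely requires care is the orientation transport in part \ref{item:DL}: removing a generator from the left moves the base alcove away from $\fa$, so one cannot keep the fixed orientation $\varphi$ but must translate the tail back to $\fa$ by the $W$-action, which conjugates $\phi_\sigma$ to $\phi_{s\sigma}$. Checking that this action indeed takes $\varphi$-positive folds to $s\varphi$-positive folds --- equivalently that $s\cdot\phi_\sigma=\phi_{s\sigma}$ for Weyl chamber orientations --- is the one point where the specific geometry of Weyl chamber orientations, beyond mere braid-invariance and wall-consistency, is used.
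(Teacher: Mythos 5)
Your proof is correct, and it follows the same route as the argument in the cited source \cite[Thm.~7.1]{Shadows} (the survey itself only states the theorem without proof): peel off the last, respectively first, letter of a reduced word, classify the distinguished panel as a crossing or a (necessarily positive) fold, and in part \ref{item:DL} transport the tail back to the base alcove by the left $W$-action, which replaces $\varphi=\phi_\sigma$ by $\phi_{s\sigma}=s\varphi$. You correctly identify the two points that need care --- braid-invariance to make the shadow independent of the chosen reduced word, and the orientation transport $s\cdot\phi_\sigma=\phi_{s\sigma}$ --- so nothing is missing.
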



For more results on the recursive behavior of shadows with respect to Weyl chamber orientations in affine Coxeter groups compare Algorithms R and L in Section 7.2 as well as Theorem 7.2 in \cite{Shadows}. More general orientations with respect to chimneys were introduced by Mili\'cevi\'c, Naqvi, Thomas and myself in \cite{MNST}. See in particular Theorem 1.1 of \cite{MNST} where the recursive behavior of a shadow with respect to a chimney orientation is described. An example of such a chimney is provided in Figure~\ref{fig:AlcoveShadowA2}. 
How shadows of chimneys relate to sets of positively folded galleries to certain orbits in (partial) affine flag varieties is explained in Theorems 1.2 and 1.3 of \cite{MNST}. These theorems are flag variety versions of results in \cite{GaussentLittelmann} and \cite{PRS} in the affine Grassmannian which we will get to know in the next section.

\begin{figure}[ht]
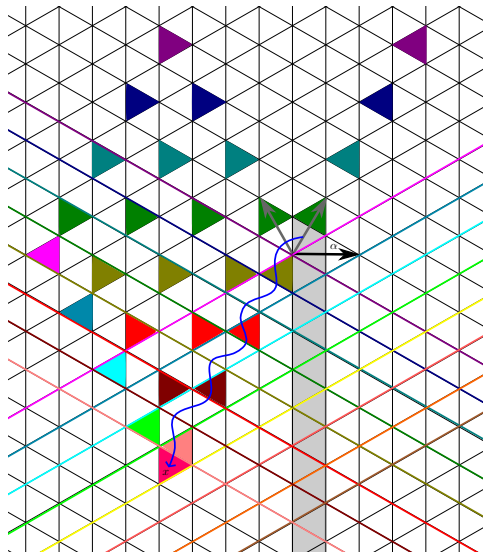

	\begin{center}
		\resizebox{0.5\textwidth}{!}
		{
			\begin{overpic}{AlcoveShadowA2}
				\put(58.5,55.5){$\alpha$}
				\put(28,14){$x$}
			\end{overpic}
		}
		\caption{The colored-in alcoves are the shadow of the pink alcove $x$ with respect to the chimney represented by the gray shaded region. See \cite{MNST} for further details.}
		\label{fig:AlcoveShadowA2}
	\end{center}
\end{figure}


\section{Back to the roots}
\label{sec:gallerymodel}

To the best of my knowledge the first major application of positively folded galleries was by Gaussent and Littelmann in \cite{GaussentLittelmann} where they are used to describe a dense subset of an MV-cycle. The folded galleries are the combinatorial analogs of folded paths, the main tool in Littelmann's path model \cite{Littelmann1, Littelmann2}. 
Folded or stammering, respectively stuttering, galleries (without the condition of positivity with respect to an orientation) appear in many places in the literature. The positively folded galleries appeared under the name of distinguished subexpressions in the work of Deodhar \cite{Deodhar}. The term folded galleries was first used in the PhD thesis of Gaussent. 

In this section I would like to highlight and explain parts of the results of the gallery model introduced in \cite{GaussentLittelmann} which interprets the path model in terms of the geometry of the associated affine Grassmannian. We will also see how this is related to the theory of buildings and to the notion of shadows introduced above. 

The material in this section is also relevant for the applications reported on in \Cref{sec:Kostant,sec:ADLV}. 
 
Littelmann answered some of the most basic questions in the representation theory of symmetrizable Kac-Moody algebras in a uniform and elegant way. Using folded paths and root operators on the collection of all paths he was able to  
\begin{itemize}
	\item compute the weight multiplicities of irreducible highest weight representations, and
	\item determine the irreducible components of the tensor product of two highest weight representations.  
\end{itemize} 

The path model provides a uniform answer to these (and more) questions by explicit combinatorial formulas counting certain piece-wise linear positively folded paths. These questions were also answered by the notion of crystal structures due to Kashiwara \cite{Kashiwara} and Lusztig \cite{LusztigCanonical}. Littelmann's approach generalizes the standard monomial theory of Lakshmibai and Seshadri \cite{LakshmibaiSeshadri} and is as well an instance of a crystal structure.

\subsection{Positively folded galleries of maximal dimension} 

In this section the notion of a dimension of a positively folded gallery is introduced. The dimension counts, loosely said, how many positive folds and crossings there are in a given gallery.
 
We are only considering vertex-to-vertex galleries in affine Coxeter complexes $\Sigma$ in this section and will not explicitly mention this standing assumption in all places. 
Such a vertex-to-vertex gallery is called \emph{minimal} if it has shortest possible length among all vertex-to-vertex galleries connecting the same pair of endpoints.

\begin{remark}
	Our notion of combinatorial galleries is not exactly the same as the definition of that term provided in \cite{GaussentLittelmann}. In \cite{MST} we discuss the difference between these notions and also discuss the relationship between minimal vertex to vertex galleries and minimal alcove to alcove galleries which are (unfortunately) not exactly equivalent. 
\end{remark} 

Let $\phi$ be a Weyl chamber orientation on $\Sigma$. To every gallery we may associate a dimension by counting a specific class of  hyperplanes, namely the load-bearing ones. Following \cite{GaussentLittelmann} we define those for a fixed vertex-to-vertex gallery  $\gamma=(\lambda_0, c_0, p_1, c_1, p_2, \ldots p_n, c_n, \lambda_n)$ as follows. 
	
\begin{definition}[load-bearing hyperplanes]\label{def:load-bearing}
 A hyperplane $H$ in a Coxeter complex is \emph{load-bearing} for a gallery $\gamma$ at $p_i$ (or $\lambda_0$) if $H$ contains $p_i$ (or $\lambda_0$) and  $c_i$ (respectively $c_0$) is on the positive side of $H$. 
\end{definition}

A hyperplane $H$ is load-bearing if $H$ separates $c_i$ from the chamber at infinity defining $\phi$. For the panels $p_i$ a hyperplane is load-bearing if the gallery $\gamma$ either has a positive fold at $p_i$ or if the gallery crosses $H$ from the negative side (where $c_{i-1}$ lies) to the positive side (where $c_i$ lies). At every panel there is hence at most one load-bearing hyperplane but the same hyperplane may appear to be load-bearing at more than one panel. At the vertex $\lambda_0$ there can be more than one load-bearing hyperplane.  

Note that \cite{GaussentLittelmann} and many other references refer to hyperplanes as walls and hence call the objects defined in \ref{def:load-bearing} load-bearing walls instead. 

\begin{definition}(Gallery dimension)\label{def:dimension}
	The \emph{dimension of a gallery} is the number of its pairs $(p_i, H)$ with $H$ a  load-bearing hyperplane at $p_i$.  
\end{definition}

Let's look at an example. Suppose the gallery $\gamma$ starts at the origin $\lambda_0$ and has as its first alcove the fundamental alcove $\fa$ corresponding to the identity. Suppose in addition that the Weyl chamber orientation $\phi$ is induced by the opposite of the fundamental Weyl chamber. Then every hyperplane containing the origin is load-bearing for $\lambda_0$ and hence the number of hyperplanes load-bearing for $\lambda_0$ is the same as the number of reflections in the spherical Weyl group or the number of positive roots in the underlying root system. For all other choices of Weyl chamber orientations the number of load-bearing hyperplanes is bounded above by that amount.

Dimensions behave well with respect to translations of galleries and also other natural mutations many of which are illustrated and explained in detail in Section 4.1 of \cite{MST}. 

It was shown in \cite[Prop. 3]{GaussentLittelmann} that the dimension of galleries of a fixed type, of which there may be many, is bounded above by a certain number that solely depends on the type and the end-vertex. The ones that attain this upper bound play a very specific role and hence deserve their own name.

\begin{definition}[Lakshmibai-Seshadri galleries]\label{def:LSgalleries}
	A gallery $\gamma$ of a fixed type $\tau$ and fixed end-vertex that has the maximal possible dimension is called \emph{LS-gallery} of type $\tau$. 
\end{definition} 

One can obtain all  LS-galleries of a fixed type (ranging over all possible endpoints) by repeated applications  of certain root operators $e_\alpha$ and $f_\alpha$ to one given $LS$-gallery. These operators exists for any choice $\alpha$ of a parallel class of hyperplanes in $\Sigma$, called a root. 
These operators are tools to manipulate positively folded and in particular LS-galleries in a very controlled way that allows to keep track of how their dimension changes along the way. 

The main idea is the following:   
Fix a Weyl chamber orientation in $\Sigma$ and a vertex-to-vertex gallery $\gamma$ starting at the origin $\lambda_0$. These operators pull, respectively push positive folds from one hyperplane in a given parallel class to the next hyperplane. If $\gamma$ has no fold along a hyperplane in the parallel class $\alpha$, then one of the two operators may be applied to introduce a new fold. I should mention that not every operator is defined for every gallery. However, for every gallery there is always some $\alpha$ such that either $e_\alpha$ or $f_\alpha$ is defined.

\begin{figure}[htb]
	\begin{center}
		\begin{overpic}[width=0.5\textwidth]{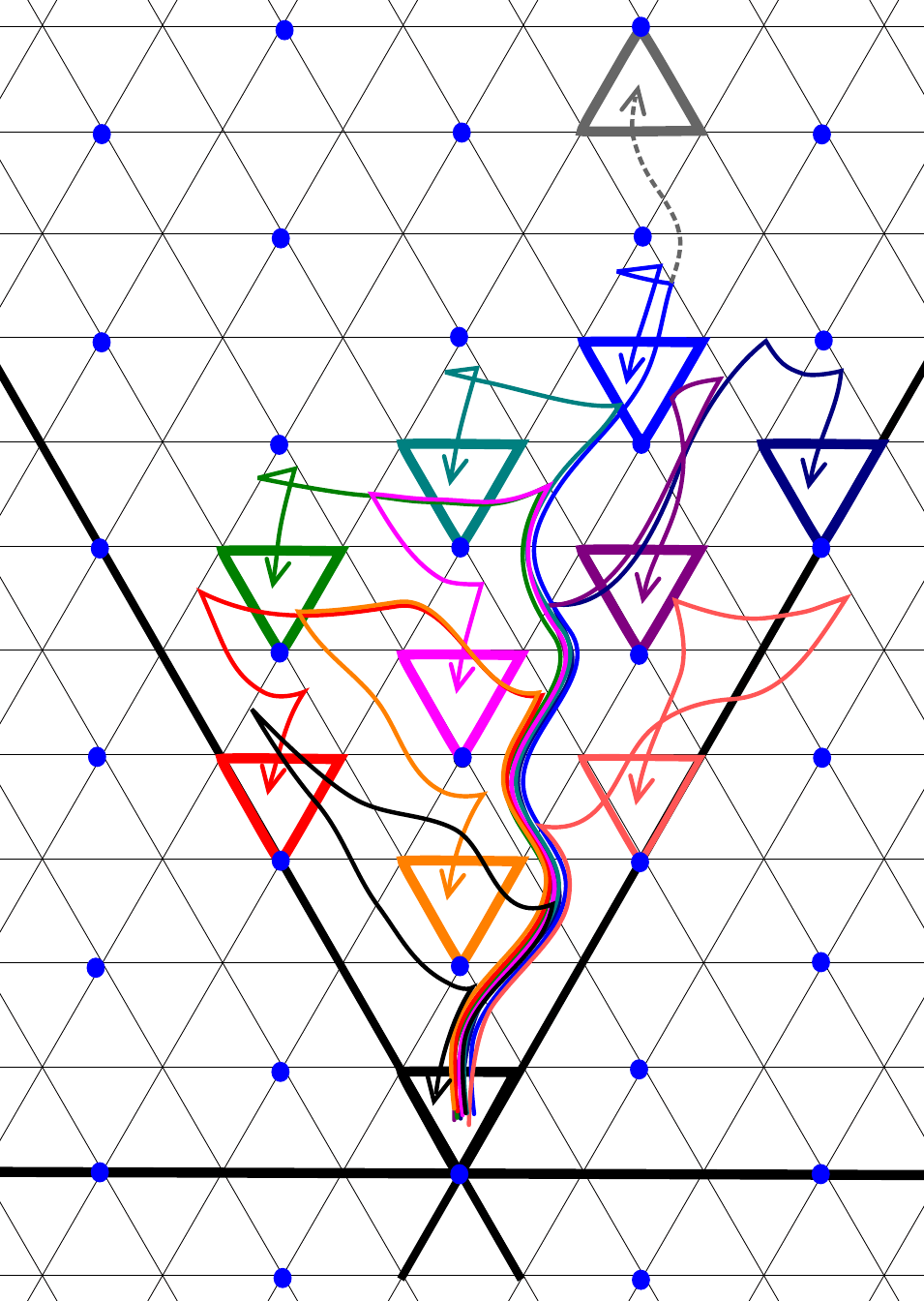}
			\put(0,60){{$H_{1}$}}
			\put(72,70){{$H_{2}$}}
		\end{overpic}	
	\end{center}
	\caption{Applying available root operators to explicitly constructed galleries. This implies that ADLVs $X_x(b) \neq \emptyset$ for most $b=t^\mu$ between 1 and $x$. See Section~\ref{sec:ADLV} for details. The figure is taken from \cite{MST}.}
	\label{fig:root-operators}
\end{figure}

Some applications of root operators are illustrated in Figure~\ref{fig:root-operators}. For example  the orange gallery is obtained from the red by an application of the root operator associated with the class of hyperplanes parallel to the diagonal labeled $H_2$. The green galleries are obtained from the dark blue gallery by one, respectively two applications of a root operator corresponding to the class of Hyperplanes parallel to the diagonal labeled $H_1$.  

The main properties of root operators are as follows. Compare Cor. 2 and Lemmas 5,6,7 and Prop. 5 in \cite{GaussentLittelmann} for a proof.  

\begin{prop}[Properties of root operators]
Suppose $\gamma$ is an LS-gallery in some Coxeter complex $\Sigma$. Then the following hold. 	
\begin{enumerate}
	\item An application of $e_\alpha$ increases the dimension of $\gamma$ by one. 
	\item An application of $f_\alpha$ decreases the dimension of $\gamma$ by one.
	\item If $e_\alpha$ is defined for $\gamma$, then $f_\alpha$ is defined for $e_\alpha(\gamma)$ and $f_\alpha(e_\alpha(\gamma))=\gamma$ and similarly with roles of $e_\alpha$ and $f_\alpha$ switched. 
	\item The set of LS-galleries is stable under applications of $e$ and $f$ operators. 
	\item Any LS gallery of type $\tau(\gamma)$ starting at the same point as $\gamma$ can be obtained from $\gamma$ by a finite number of applications of $e$ or $f$ operators. 
\end{enumerate}
\end{prop}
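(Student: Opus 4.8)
The plan is to reduce all five items to a careful bookkeeping of load-bearing hyperplanes under the reflection that defines the operators. So the first step is to pin down the operators precisely: for a fixed root $\alpha$ one records the behaviour of $\gamma$ along the parallel class $\alpha$, locates the extremal hyperplane $H$ reached by $\gamma$ in the $\alpha$-direction together with the minimal subgallery $(c_{k}, p_{k+1}, \ldots, p_\ell, c_\ell)$ running between $H$ and its neighbour $H'$ in the same class, and replaces this block by its image under the reflection $r_H \in \aW$ along $H$. This is exactly the ``pull/push a fold from one hyperplane to the next'' described informally above. I would then record once and for all that the operator leaves $\gamma$ untouched outside the reflected block, preserves the type $\tau$, and shifts only the end-vertex (by $\alpha$), since everything to follow hinges on these three facts.

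For items (1) and (2) I would compare the multiset of load-bearing pairs $(p_i,H)$ before and after the move. Panels outside the reflected block keep their load-bearing status verbatim. Inside the block every panel and alcove is moved by the single element $r_H$, which permutes the whole hyperplane arrangement; the substance of the computation — carried out in \cite[Lemmas~5--7]{GaussentLittelmann} — is that, after re-indexing, the changes in load-bearing status cancel in pairs except at one hyperplane of the class $\alpha$, where a fold is created or destroyed, giving a net change of exactly $+1$ for $e_\alpha$ and $-1$ for $f_\alpha$. The delicate point, and part of the genuine content, is verifying that reflecting the block does not secretly flip the load-bearing status of a crossing in some other parallel class $\beta$; this is precisely where the \emph{wall-consistency} and \emph{braid-invariance} of the Weyl chamber orientation, noted after Definition~\ref{def:Weyl chamber orientation}, are used to force the cancellation.

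Item (3) is then almost formal: $f_\alpha$ applied to $e_\alpha(\gamma)$ locates the same hyperplane $H$ and the same block (now sitting one step higher) and reflects back by $r_H$, which is an involution, so $f_\alpha(e_\alpha(\gamma))=\gamma$; the symmetric statement is identical with the roles of the $\alpha$-minimum and $\alpha$-maximum interchanged. For item (4) I would combine (1) and (2) with the upper bound of \cite[Prop.~3]{GaussentLittelmann}: the operators preserve $\tau$ and shift the end-vertex by $\alpha$, and one checks that the stated bound on the dimension of a gallery of type $\tau$ with a prescribed end-vertex changes by exactly the same $\pm 1$ as the dimension itself. Hence a gallery meeting the bound (an LS-gallery) is carried to a gallery again meeting the bound for its new end-vertex, i.e. to an LS-gallery, proving stability of the set of LS-galleries.

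The main obstacle is item (5), which is a global connectedness statement rather than a local computation. I would argue that the LS-galleries of type $\tau$ starting at $\lambda_0$ form a single connected crystal under the operators. Since each application of $f_\alpha$ strictly decreases the non-negative integer dimension by item (2), any sequence of available lowering operators terminates after finitely many steps at an LS-gallery on which no $f_\alpha$ is defined. The crux is the lemma that such a terminal gallery is \emph{unique} — equivalently, that every LS-gallery other than this distinguished lowest one admits some defined $f_\alpha$. Granting this, both $\gamma$ and any competitor $\gamma'$ of the same type and start are joined by chains of lowering operators to the common terminal gallery, and the invertibility from item (3) lets one run the chain from the terminal gallery back up to $\gamma$, producing a sequence of operators carrying $\gamma$ to $\gamma'$. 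An alternative and arguably cleaner route for (5) is to verify that the gallery crystal satisfies the Littelmann/Stembridge crystal axioms and to invoke the known uniqueness and connectedness of the highest-weight crystal $B(\tau)$; I would fall back on this if establishing the ``terminal implies unique'' lemma directly proves too combinatorially involved.
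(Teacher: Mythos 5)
The paper does not actually prove this proposition: it is quoted from Gaussent--Littelmann, and the text only points to Cor.~2, Lemmas~5--7 and Prop.~5 of \cite{GaussentLittelmann} for the proofs. Measured against that source, your outline follows the same strategy (explicit description of the operators, bookkeeping of load-bearing hyperplanes for (1)--(2), involutivity of the reflection for (3), comparison with the dimension bound of \cite[Prop.~3]{GaussentLittelmann} for (4), and a connectedness argument for (5)), so the overall architecture is sound. Two points need attention. First, your description of the operators is not quite right: $e_\alpha$ and $f_\alpha$ do not merely reflect one block and leave the rest of the gallery untouched --- the portion of the gallery \emph{after} the reflected block is translated by the coroot $\alpha^\vee$ (otherwise the end-vertex could not shift, as you yourself require). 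This matters for (1) and (2), because one must separately check that the translated tail keeps all of its load-bearing pairs; this is immediate from the fact that translations fix the chamber at infinity defining the Weyl chamber orientation, but it has to be said, and your claim that the gallery is ``untouched outside the reflected block'' is internally inconsistent with the endpoint shifting.

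Second, for item (5) you correctly identify the crux --- that the terminal gallery under the lowering (or raising) operators is unique --- but you only grant it. In \cite{GaussentLittelmann} this is precisely the content of the cited lemmas: one shows that an LS-gallery on which no raising operator $e_\alpha$ is defined must be entirely contained in the dominant Weyl chamber and hence coincides with the minimal gallery $\gamma_\lambda$; connectedness then follows by running every gallery up to this unique source and back down, exactly as you describe. So your plan is the right one, but the lemma you defer is where essentially all the work of (5) lives, and falling back on the Stembridge/Littelmann crystal axioms would require verifying those axioms for the gallery crystal, which is not easier. As a proposal the sketch is faithful to the reference the paper relies on; as a proof it is incomplete at exactly the points where the reference does the hard computations.
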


These properties already hint at the close connection between LS-galleries, root operators and crystal structures in representation theory. I will now explain how to turn the collection of LS-galleries into a graph and then how this is related to crystal graphs of irreducible representations of $G^\vee$ of some fixed highest weight. 

Suppose $G$ is a connected complex semisimple algebraic group. Let $T$ be a fixed maximal torus in $G$ and let $X=X(T)$ be its character group. Then the group of co-characters $X^\vee=\Mor(\C^\ast, T)$ is the character group of the dual maximal torus $T^\vee$ in $G^\vee$. The dominant co-characters $\lambda\in X^\vee_+$ classify the irreducible, finite-dimensional $G^\vee$ modules $V(\lambda)$ while appearing as their highest weights. 
These elements $\lambda\in X^\vee_+$ may also be viewed as vertices in $\Sigma$ that are contained in the fundamental Weyl chamber.  

Fix a minimal vertex-to-vertex gallery $\gamma$ connecting the origin with a weight $\lambda$ and denote by $\Gamma_{LS}(\gamma)$ the collection of all LS-galleries of the same type as $\gamma$. Define  $B(\gamma)$ to be the directed, edge colored graph whose vertices are equal to the set of all LS-galleries of type $\tau(\gamma)$. There exists a directed edge from vertex $\delta_1$ to $\delta_2$ in $B(\gamma)$ whenever $f_\alpha(\delta_1)=\delta_2$ for some root $\alpha$.

\begin{thm}[Crystals and characters {\cite[Thm A]{GaussentLittelmann}}]\label{thm:character}
	 Let $\gamma$, $\lambda$ be as above. Then the graph $B(\gamma)$ is connected and isomorphic to the crystal graph of the irreducible representation $V(\lambda)$ of $G^\vee$ of highest weight $\lambda$. Then
	 \[\mathrm{Char} V(\gamma)=\sum_{\delta\in\Gamma_LS(\gamma)} exp(end(\delta))\] 
	 where $end(\delta)$ denotes the end-vertex of $\delta$. 
\end{thm}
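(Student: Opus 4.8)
The plan is to recognize $B(\gamma)$ as an abstract crystal in the sense of Kashiwara, to exhibit the minimal gallery $\gamma$ itself as its unique highest weight element, and then to invoke the uniqueness of connected normal highest weight crystals to identify $B(\gamma)$ with the crystal $B(\lambda)$ of $V(\lambda)$; the character identity then follows by collecting weights. The engine driving every step is the Proposition on \emph{Properties of root operators} stated just above, so my job is mainly to translate its five items into crystal-theoretic language.

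First I would equip $\Gamma_{LS}(\gamma)$ with crystal data. I declare the weight of an LS-gallery $\delta$ to be its end-vertex $\mathrm{end}(\delta)\in X^\vee$, and for each simple root $\alpha$ I set $\varphi_\alpha(\delta)$ (resp.\ $\varepsilon_\alpha(\delta)$) to be the largest $k$ for which $f_\alpha^k(\delta)$ (resp.\ $e_\alpha^k(\delta)$) is defined. The axioms of an abstract crystal then follow directly: invertibility (item 3) makes these string lengths well defined and gives $e_\alpha(\delta)=\delta'\Leftrightarrow f_\alpha(\delta')=\delta$, the dimension bookkeeping (items 1, 2) records that $e_\alpha$ and $f_\alpha$ shift $\mathrm{end}(\delta)$ by $+\alpha$ and $-\alpha$ and yields $\varphi_\alpha(\delta)-\varepsilon_\alpha(\delta)=\langle\mathrm{end}(\delta),\alpha^\vee\rangle$, and stability (item 4) keeps us inside $\Gamma_{LS}(\gamma)$. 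Note that \emph{normality} of this crystal is automatic, since $\varepsilon_\alpha,\varphi_\alpha$ are defined as maximal string lengths by fiat.

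Next I would pin down the highest weight element and connectivity. Because $\gamma$ is a \emph{minimal} vertex-to-vertex gallery from the origin to the dominant weight $\lambda$, it lies in the dominant cone and carries no positive folds, so no raising operator $e_\alpha$ is defined on it; hence $\gamma$ is a highest weight element of weight $\lambda$. Item (5) of the Proposition says every LS-gallery of type $\tau(\gamma)$ is reached from $\gamma$ by a finite sequence of $e$ and $f$ operators, which is precisely the assertion that $B(\gamma)$ is connected with $\gamma$ as its source; in particular the highest weight element is unique, since a second one would seed a separate component. I now invoke the characterization theorem for crystals: a connected normal crystal possessing a highest weight element of dominant weight $\lambda$ is isomorphic to $B(\lambda)$. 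This gives $B(\gamma)\cong B(\lambda)$, and the character identity is then formal, as each $\delta$ matches a crystal basis vector of the same weight $\mathrm{end}(\delta)$, so $\dim V(\lambda)_\mu=\#\{\delta:\mathrm{end}(\delta)=\mu\}$ and $\mathrm{Char}\,V(\lambda)=\sum_{\delta\in\Gamma_{LS}(\gamma)}\exp(\mathrm{end}(\delta))$.

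The main obstacle is this last identification step, since the uniqueness of connected normal highest weight crystals is a nontrivial input resting on Kashiwara's theory (or, equivalently, on Stembridge's local axioms). To keep the argument self-contained I would instead follow Littelmann's original route and prove the character identity directly: the formal sum $\sum_\delta\exp(\mathrm{end}(\delta))$ is $W$-invariant because each $\alpha$-string $\{e_\alpha^k\delta\}$ has end-vertices $\mu,\mu-\alpha,\dots,s_\alpha(\mu)$ forming an $s_\alpha$-symmetric segment (again by the weight bookkeeping of items 1 and 2), so it is fixed by every simple reflection; together with the facts that $e^\lambda$ occurs with multiplicity one and that the support lies in the convex hull of $W\lambda$, a standard argument via the Weyl character formula forces this $W$-invariant sum to equal $\mathrm{Char}\,V(\lambda)$. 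The crystal isomorphism then follows by matching weight-graded pieces and comparing the $f_\alpha$-edges on both sides.
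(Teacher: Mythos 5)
First, a point of comparison: the survey does not prove this theorem at all --- it is quoted verbatim as Theorem~A of Gaussent--Littelmann, whose actual argument runs by exhibiting a bijection between LS-galleries and Littelmann's LS-paths that intertwines the two families of root operators, and then importing the path-model character formula (itself proved by Littelmann via an induction establishing the Demazure character formula $\mathrm{Char}\, B_\pi(w)=D_w(e^\lambda)$ and specializing to $w=w_0$). So your proposal is necessarily a different route, and it contains two genuine gaps at exactly the point where you try to close the argument. In your first route, ``connected $+$ seminormal $+$ unique highest weight element of weight $\lambda$'' does \emph{not} characterize the crystal $B(\lambda)$: there are connected seminormal edge-colored graphs with a unique source that are not crystals of representations. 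The needed rigidity comes from Stembridge's local axioms (simply-laced case) or from Joseph/Kashiwara-style ``closed family'' arguments, and verifying those for $\Gamma_{LS}(\gamma)$ is essentially the whole content of the theorem; declaring normality ``by fiat'' via string lengths only gives seminormality, and the relation $\varphi_\alpha-\varepsilon_\alpha=\langle \mathrm{end}(\delta),\alpha^\vee\rangle$ is not a consequence of items (1)--(2) of the quoted proposition, which track the \emph{dimension} of a gallery, not its end-vertex. The weight-shift by $\pm\alpha$ and the $s_\alpha$-symmetry of strings are separate, nontrivial properties of the Gaussent--Littelmann operators (their Lemmas 5--7), and they are precisely where the LS/maximal-dimension hypothesis is used.

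Your fallback route has the same problem in a different guise. A $W$-invariant element of the group algebra, supported in $\mathrm{conv}(W\lambda)$, with $e^{\lambda}$-coefficient $1$, need not equal $\mathrm{Char}\,V(\lambda)$: the sum $\mathrm{Char}\,V(\lambda)+\mathrm{Char}\,V(\mu)$ for any dominant $\mu<\lambda$ satisfies all three conditions. Writing your sum as $\sum_\nu c_\nu\,\mathrm{Char}\,V(\nu)$ with integer $c_\nu$, you must still show $c_\nu=0$ for $\nu\neq\lambda$; the natural way is to argue that $\sum_\nu c_\nu$ equals the number of elements annihilated by all $e_\alpha$ (which you have shown is one), but that identification again presupposes that the crystal decomposes into genuine highest-weight components --- the structural fact you were trying to avoid. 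To repair the argument you would either have to verify Stembridge's axioms for the gallery crystal, or follow Littelmann's inductive Demazure-operator proof, or (as Gaussent--Littelmann actually do) transport the problem to the path model where these facts are already established. As written, both endgames are circular or incomplete, even though the crystal-theoretic scaffolding in your first two paragraphs is sound and consistent with how the result is understood in the literature.
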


This theorem establishes a connection between purely combinatorial objects in the Coxeter complex, namely the LS-galleries, and highest weight representations and their characters. 

One consequence of this result is the following.

\begin{prop}
	Let $\lambda$ be a dominant vertex in $\Sigma$. Write $\phi$ for the Weyl chamber orientation  induced by the anti-dominant Weyl chamber. 
	Then \[\Shadow_{\phi}^\vee(\lambda)=\{\nu\leq \lambda\mvert \nu \text{ a vertex in } \Sigma \text{ with }\tau(\nu)=\tau(\lambda)\}.\]  
\end{prop}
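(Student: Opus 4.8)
The plan is to derive the proposition from the crystal/character statement in Theorem~\ref{thm:character}, together with the classical description of the weights of an irreducible highest-weight module. Throughout I fix a minimal vertex-to-vertex gallery $\gamma=\gamma_\lambda$ from the origin $\lambda_0$ to $\lambda$; since the Weyl chamber orientation $\phi$ is braid-invariant, the shadow $\Shadow_\phi^\vee(\lambda)$ depends only on the type $\type(\gamma)$ and not on the particular choice of $\gamma$. The first step is to identify the shadow with the set of end-vertices of LS-galleries, i.e.\ to prove
\[
\Shadow_\phi^\vee(\lambda)=\{\mathrm{end}(\delta)\mid \delta\in\Gamma_{LS}(\gamma)\}.
\]
For the inclusion ``$\supseteq$'' I would use that, for the anti-dominant orientation, the dimension of a gallery counts exactly its positive folds and positive crossings (together with the hyperplanes through $\lambda_0$, all of which are load-bearing for this orientation); a dimension-maximal gallery can therefore carry no negative fold, so every LS-gallery is $\phi$-positively folded and its end-vertex lies in the shadow. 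For ``$\subseteq$'', if $\nu\in\Shadow_\phi^\vee(\lambda)$ then there is at least one $\phi$-positively folded gallery of type $\type(\gamma)$ ending at $\nu$; since by \cite[Prop.~3]{GaussentLittelmann} the dimension of galleries of a fixed type with a fixed end-vertex is bounded above, a maximal such gallery exists and is by definition an LS-gallery ending at $\nu$.

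With this identification in hand, Theorem~\ref{thm:character} finishes most of the work: the end-vertices of the galleries in $\Gamma_{LS}(\gamma)$, counted with multiplicity, reproduce $\mathrm{Char}\,V(\lambda)$, so the set of end-vertices that actually occur is precisely the set of weights of the irreducible $G^\vee$-module $V(\lambda)$ of highest weight $\lambda$. It then remains to recognise the right-hand side of the proposition as this weight set. Regarding $\lambda$ as a dominant weight of $G^\vee$, the weights of $V(\lambda)$ are exactly the vertices lying in the coset $\lambda+Q^\vee$ (the root lattice of $G^\vee$, equivalently the coroot lattice of $G$) that also lie in $\conv(\sW\cdot\lambda)$. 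The coset condition $\nu\in\lambda+Q^\vee$ is precisely $\type(\nu)=\type(\lambda)$, since types of special vertices are indexed by $P^\vee/Q^\vee$ and are preserved under folding, while the convex-hull condition is precisely $\nu\leq\lambda$ in the (saturated) dominance order appearing on the right-hand side. Assembling these identifications gives the claimed equality of sets.

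The substantive step is the first one, and within it the assertion that LS-galleries are automatically $\phi$-positively folded and that positively folded and LS-galleries realise the same end-vertices. This is exactly where the choice of the anti-dominant orientation is indispensable: only for this orientation does maximising the gallery dimension coincide with maximising the number of positive folds and crossings, which is what forbids negative folds in a dimension-maximal gallery. A second, more bookkeeping, obstacle lies in the last step: one must take care that ``$\leq$'' denotes the saturated (convex-hull) dominance order rather than the naive downward closure generated by the simple coroots, since the two already differ in rank one, where the former recovers the weights of $V(\lambda)$ and the latter does not.
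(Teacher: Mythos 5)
Your overall route---reducing the statement to Theorem~\ref{thm:character} together with the classical description of the weights of $V(\lambda)$---is exactly the derivation the paper intends: it states the proposition as ``one consequence'' of that theorem, and the same two ingredients reappear in the sketch of the proof of Theorem~\ref{thm:KostantConvexity}. Your inclusion ``$\supseteq$'' is sound: LS-galleries are positively folded, their end-vertices are the weights of $V(\lambda)$ by the character formula, and the weight set is $(\lambda+Q^\vee)\cap\conv(\sW.\lambda)$, which you correctly match with the right-hand side (including the point about the saturated order and the identification of the type condition with the lattice coset).

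The gap is in the inclusion ``$\subseteq$''. You take a $\phi$-positively folded gallery of type $\type(\gamma)$ ending at $\nu$ and pass to one of maximal dimension among those ending at $\nu$, then declare it an LS-gallery ``by definition''. But an LS-gallery is a gallery whose dimension attains the a priori upper bound of \cite[Prop.~3]{GaussentLittelmann}---a quantity of the form $\langle\rho,\lambda+\nu\rangle$ depending only on the type and end-vertex---not merely one of maximal dimension within the (possibly small) finite set of positively folded galleries that happen to end at $\nu$. Maximality in a bounded set does not give attainment of the bound, so you have not produced an LS-gallery ending at $\nu$, and hence not shown that $\nu$ is a weight of $V(\lambda)$. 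The argument the paper relies on for this direction is different: one shows directly that the end-vertex of every $\phi$-positively folded gallery of type $\type(\gamma)$ lies in $\conv(\sW.\lambda)$ (see the discussion surrounding Theorem~\ref{thm:geometricConvexity} and Proposition~3.3 of \cite{Convexity}), for instance by unfolding the folds one at a time and tracking how the endpoint moves relative to the dominance order; combined with the type/lattice condition this yields $\nu\le\lambda$ with $\tau(\nu)=\tau(\lambda)$ without ever exhibiting an LS-gallery ending at $\nu$. A smaller soft spot: your claim that a dimension-maximal gallery carries no negative fold also needs justification, since unfolding a negative fold reflects the tail and changes the end-vertex, so the dimensions of the two galleries are not directly comparable; this too is a statement from \cite{GaussentLittelmann} rather than an immediate consequence of the definitions.
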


This relation between LS-galleries and shadows is one of the key ingredients in the proof of the Convexity Theorem~\ref{thm:KostantConvexity}. 
Before we talk about  this result we need to learn more about buildings.


\subsection{Galleries, buildings and flag varieties}\label{sec:Littelmann}

Folded galleries and even root operators have very natural interpretations in the context of (Bruhat-Tits) buildings. We will now explain what a building is and how they give rise to folded galleries. 
All of this is also closely related to semisimple algebraic or, more generally, reductive groups and their affine Grassmannians.

\begin{definition}[Buildings]\label{def:buildings}
	A \emph{building} is a simplicial complex $X$ which is the union of a collection $\App$ of subcomplexes called \emph{apartments} such that the following holds: 
	\begin{enumerate}
		\item Each $A\in\App$ is isomorphic to a Coxeter complex $\Sigma=\Sigma(W,S)$.
		\item Any two simplices in $X$ lie in common apartment. 
		\item If for two simplices $c,c'$ in $X$ there are two apartments $A,A'$ containing both $c$ and $c'$ then there exists an isomorphism from $A$ to $A'$ which pointwise fixes $A\cap A'$.  
	\end{enumerate}
	The set $\App$ is called \emph{atlas} of $X$. 
\end{definition}

From the definition of a building one can directly deduce the following lemma. 

\begin{lemma}
All apartments in an atlas $\App$ of a building $X$ are isomorphic to the same Coxeter complex $\Sigma$.
\end{lemma}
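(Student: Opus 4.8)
The plan is to show that any two apartments $A, A' \in \App$ are isomorphic as simplicial complexes. Since each apartment is, by the first building axiom, isomorphic to \emph{some} Coxeter complex, and since simplicial isomorphisms can be composed and inverted, establishing pairwise isomorphism of apartments lets me fix one apartment $A_0 \cong \Sigma = \Sigma(W,S)$ and conclude that every apartment is isomorphic to this single $\Sigma$. So the whole content reduces to producing, for arbitrary $A, A' \in \App$, a simplicial isomorphism $A \to A'$.

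First I would bridge $A$ and $A'$ through a third apartment that shares a maximal simplex with each. Concretely, I choose a maximal simplex (chamber) $c$ in $A$ and a maximal simplex $c'$ in $A'$. By the second building axiom the two simplices $c$ and $c'$ lie in a common apartment $B \in \App$, and this $B$ will play the role of a bridge.

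Next I would invoke the third building axiom twice. The apartments $A$ and $B$ both contain $c$ and hence, being subcomplexes, both contain every face of $c$; in particular they share at least two simplices, say $c$ together with one of its panels. The third axiom therefore yields a simplicial isomorphism $A \to B$ (which in fact fixes $A \cap B$ pointwise, although here only its existence matters). Applying the same reasoning to $A'$ and $B$, which share $c'$ and its faces, gives an isomorphism $A' \to B$. Composing one of these with the inverse of the other produces the desired isomorphism $A \cong A'$.

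The step to watch is the hypothesis of the third axiom, which is stated in terms of two simplices lying in both apartments rather than a single common chamber; I would be careful to supply these by taking a chamber together with at least one of its proper faces, noting that any subcomplex containing the chamber automatically contains those faces. This implicitly uses that the buildings under consideration have chambers that are genuine simplices with proper faces, which holds in all cases treated here. With pairwise isomorphism in hand, transitivity finishes the argument: fixing $A_0 \cong \Sigma = \Sigma(W,S)$, every apartment satisfies $A \cong A_0 \cong \Sigma$, so all apartments in $\App$ are isomorphic to the same Coxeter complex $\Sigma$.
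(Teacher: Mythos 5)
Your argument is correct and follows essentially the same route as the paper: pick simplices $c\in A$ and $c'\in A'$, use the second axiom to obtain a bridging apartment containing both, apply the third axiom twice to get isomorphisms onto the bridge, and compose. Your extra care in supplying the two shared simplices required by the third axiom (a chamber together with one of its faces) is a small refinement of a point the paper passes over silently, but it does not change the argument.
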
 
\begin{proof}
	 Suppose there are apartments $A$ and $A'$ isomorphic to $\Sigma$ and $\Sigma'$ respectively. Then for any choice of simplex $c\in A$ and $c'\in A'$ there exists an apartment $A''$ containing both $c$ and $c'$. The apartment $A''$ is, by item (1), also isomorphic to some Coxeter complex $\Sigma''$. Now apply item (3) to pairs of simplices lying in $A\cap A''$, and also in $A'\cap A''$ to see that $\Sigma$ and $\Sigma'$ are indeed both isomorphic to $\Sigma''$. Hence the claim. 	
\end{proof}

We may thus refer to the type of the Coxeter system related to the apartments in a building $X$ as the \emph{type of the building $X$}. 
In the following we will restrict to \emph{affine} buildings, that is those whose apartments are affine Coxeter complexes. 

By definition an affine building is the union of a collection of apartments all of which are isomorphic to some tiled $\R^n$.  
Again by slight abuse of notation and convention we will not distinguish between an affine building and its geometric realization. 

For more details on (affine) buildings see one of the many excellent books on this topic. For example Brown's classic introduction \cite{Brown}, the monograph by Abramenko and Brown \cite{AbramenkoBrown} or Garret's and Ronan's books \cite{Garrett, Ronan}. How buildings fit into the theme of geometric group theory and CAT(0) spaces is explained in \cite{BH} and \cite{Davis}.

\begin{example}[Affine buildings of dimension one]
	A first example of an affine building is easily given in dimension one. In this case there is only one possible complex $\Sigma$ playing the role of the apartments: the tiling of the number line by unit intervals which is the Coxeter complex $\Sigma$ of the infinite dihedral group. 
	Now any simplicial tree without leaves is an example of an affine building of type $D_\infty$. It is not hard to check that such a tree is the union of infinitely many copies of $\Sigma$. 
	One such building is shown in Figure~\ref{fig:tree_p2}. 
\end{example}

\begin{figure}[htb]
	\begin{center}
		\begin{overpic}[width=0.9\textwidth]{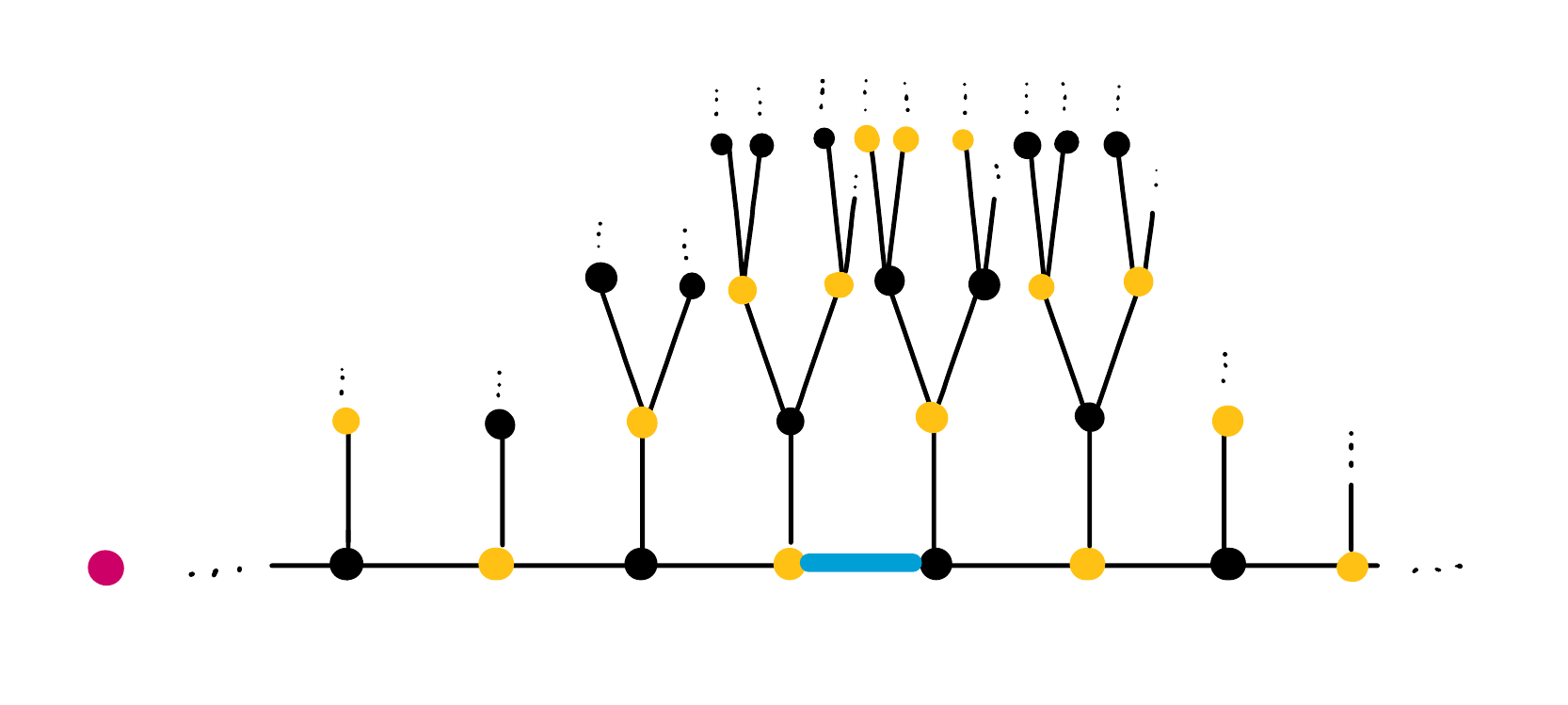}
			\put(4, 3){{$\infty$}}
			\put(54,11){{$c_0$}}
			\put(58, 3){{$\lambda_0$}}
		\end{overpic}	
		\caption{The affine Bruhat-Tits building for $SL_2(\mathbb{Q}_2)$ with respect to the $2$-adic valuation.}
		\label{fig:tree_p2}
	\end{center}
\end{figure} 

Dimension one is essentially the only dimension where interesting examples of affine buildings can be given in such an ad-hoc and explicit way. In higher dimensions examples are constructed by means of an algebraic groups. We call such buildings \emph{of algebraic origin}. From dimension three on all buildings are of algebraic origin. This is a consequence of their classification outlined by Tits in \cite{TitsComo}. A detailed proof of the classification of affine buildings can be found in \cite{WeissAffine}. Dimension two also allows for free constructions of buildings not of algebraic origin. Such constructions appeared for the first time in \cite{Ronan-free1} and \cite{RonanTits-free2} followed by more examples and constructions since then. We won't go into the details here but refer the reader to the literature instead. See for example \cite{BerensteinKapovich, FunkStrambach, RonanConstruction, DavisConstruction} and the references therein.

Recall that at infinity of an affine Coxeter complex there is a spherical simplicial complex on which the associated spherical Weyl group acts. One can prove that at infinity of an affine building exists a spherical building. This building at infinity is the union (or gluing) of all the spheres at infinity of its apartments. So in particular to any Weyl chamber in an apartment of an affine  building $X$ corresponds a chamber, i.e.  maximal simplex, in the  simplicial building at infinity. 

Suppose we have a reductive group with a BN-pair. For example $SL_n$ over $F=\C((t))$ or some other field with a discrete valuation $\nu$. To such a group one may associate an affine Bruhat Tits building $X=X(G, F, \nu)$ and a spherical building $\partial X=\partial X(G,F)$ at infinity of $X$. The apartments of $X$ and $\partial X$ are in one-to-one correspondence and chambers in $\partial X$ correspond to parallel classes of Weyl chambers in $X$. 
The group $G$ acts transitively on the apartments and alcoves of its Bruhat-Tits building and also on the pairs of alcoves and apartments containing them. A maximal torus $T$ acts as the translation group of the affine Weyl group on a fixed base apartment $A_0$ in $X$. 

In the tree case the situation is as follows. 

\begin{example}
	By Serre's construction \cite{SerreTrees} there is a tree $T$  without leaves associated with $SL_2$ over a non-archimedian local field $F$ with discrete valuation. The number of edges at any vertex is determined by the order of the residue field of the valuation. In case $F=\Q_p$ every vertex is contained in $p+1$ edges. In case $F=\C((t))$ vertices are contained in infinitely many edges. In particular trees associated with $SL_2$ are regular, that is the branching is the same at every vertex. 
	
	Figure~\ref{fig:tree_p2} shows this tree for $p=2$. The base apartment $A_0$ is the simplicial line that is drawn horizontally. The origin is labeled $0$, the fundamental alcove highlighted in blue and the anti-dominant Weyl chamber in $A_0$ determines the boundary point labeled $\infty$. 
\end{example}

Let's go back to the groups for a bit. 
Associated with $G$ is also the affine Grassmannian which, as a set, is defined as the quotient $\mathcal{G}=G(F)/G(\mathcal{O})$ with $\mathcal{O}$ being the valuation ring. So if for example $F=\C((t))$ then $\mathcal{O}=\C[[t]]$ the ring of Laurent polynomials in $t$. 

The interested reader may have a look at for example Zhu's lecture notes \cite{Zhu} for an introduction to affine Grassmannians. 

Since $G(\mathcal{O})$ may be identified with the origin in $A_0$, the points in the affine Grassmannian may be interpreted as vertices in the Bruhat-Tits building associated with $G$. 
As explained above the group $T$ acts as translations on a fixed base apartment $A_0$ in $X$.

Associated to every affine building $X$ are two kinds of retractions. The first is defined by a choice of a maximal simplex in an apartment.  

\begin{definition}[Retraction based at an alcove]\label{def:alcoveretractions}
	Fix an apartment $A$ in a building $X$ and an alcove $c\in A$. The \emph{retraction $r_{A,c}:X\to A$} from $X$ to $A$ \emph{based at $c$} is the map defined as follows: 
	
	For any alcove $d$ in $X$ choose an apartment $A'$ containing $c$ and $d$. (Such an $A'$ exists by the second buildings axiom.) Then define $r_{A,c}(d)$ to be the image of $d$ under the isomorphism from $A'$ to $A$ (provided by the third buildings axiom).   
\end{definition} 

Retractions based at alcoves are defined in buildings of any kind. The second kind of retraction is defined by a choice of direction at infinity in an affine building. 

\begin{definition}[Retraction based at infinity]\label{def:infinityretraction}
	Fix an apartment $A$ in an affine building $X$ and an chamber $C\in \partial A$, the boundary of the apartment $A$ in $\partial X$. The \emph{retraction $\rho_{A,C}:X\to A$} from $X$ to $A$ \emph{based at $C$} is the map defined as follows: \newline
	For any alcove $d$ in $X$ choose an apartment $A'$ containing the alcove $d$ and a Weyl chamber in $A$ representing $C$. Then define $\rho_{A,C}(d)$ to be the image of $d$ under the isomorphism from $A'$ to $A$ (provided by the third buildings axiom).   
\end{definition} 

See \cite{Brown} for a proof that the choice of $A'$ in the definition of the retraction at infinity is indeed always possible and for a proof that both retractions are well-defined. 

The following example illustrates these retractions in an affine building of dimension one.

\begin{figure}[htb]
	\begin{center}
		\includegraphics[width=0.33\textwidth]{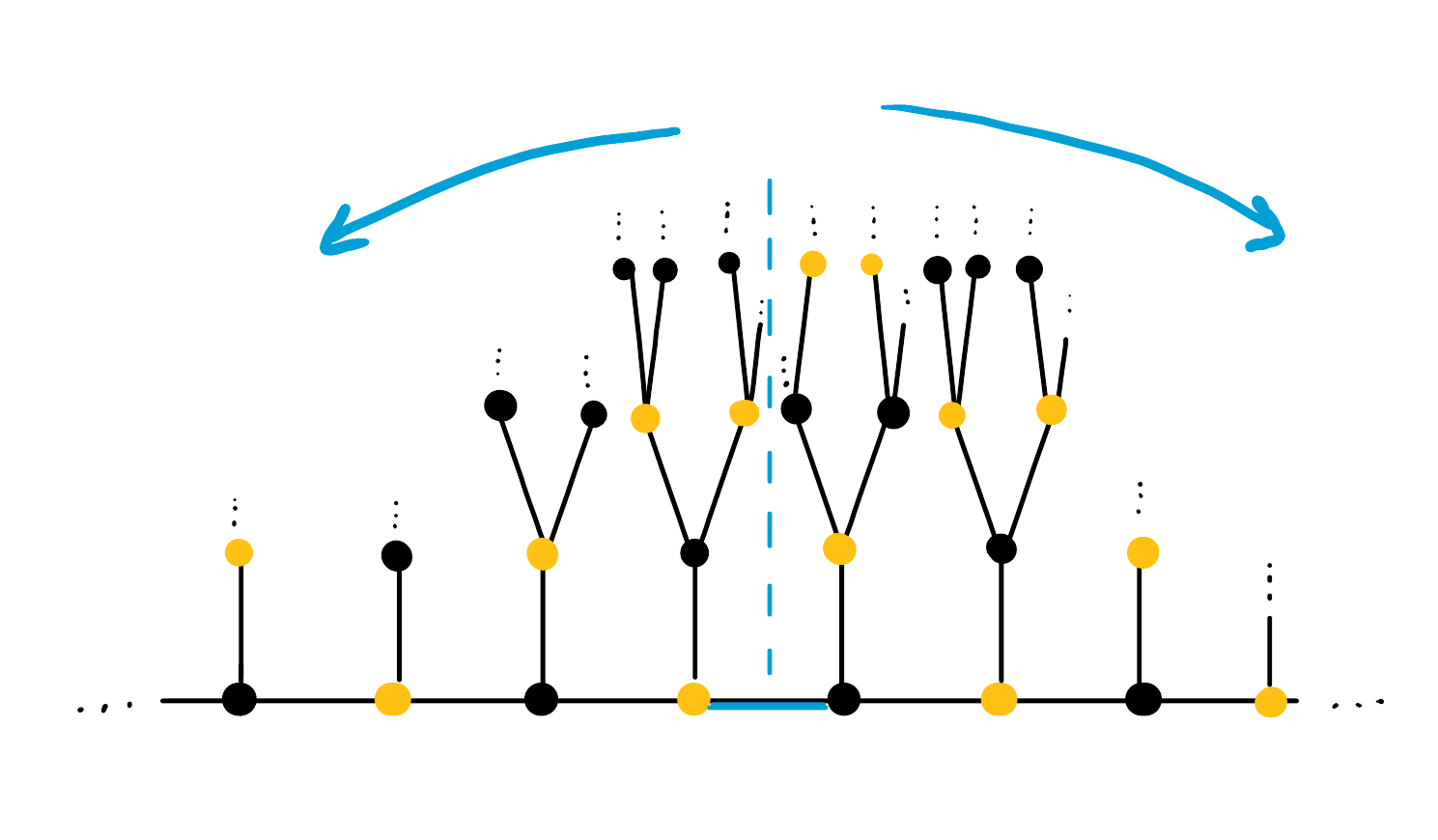}
		\includegraphics[width=0.36\textwidth]{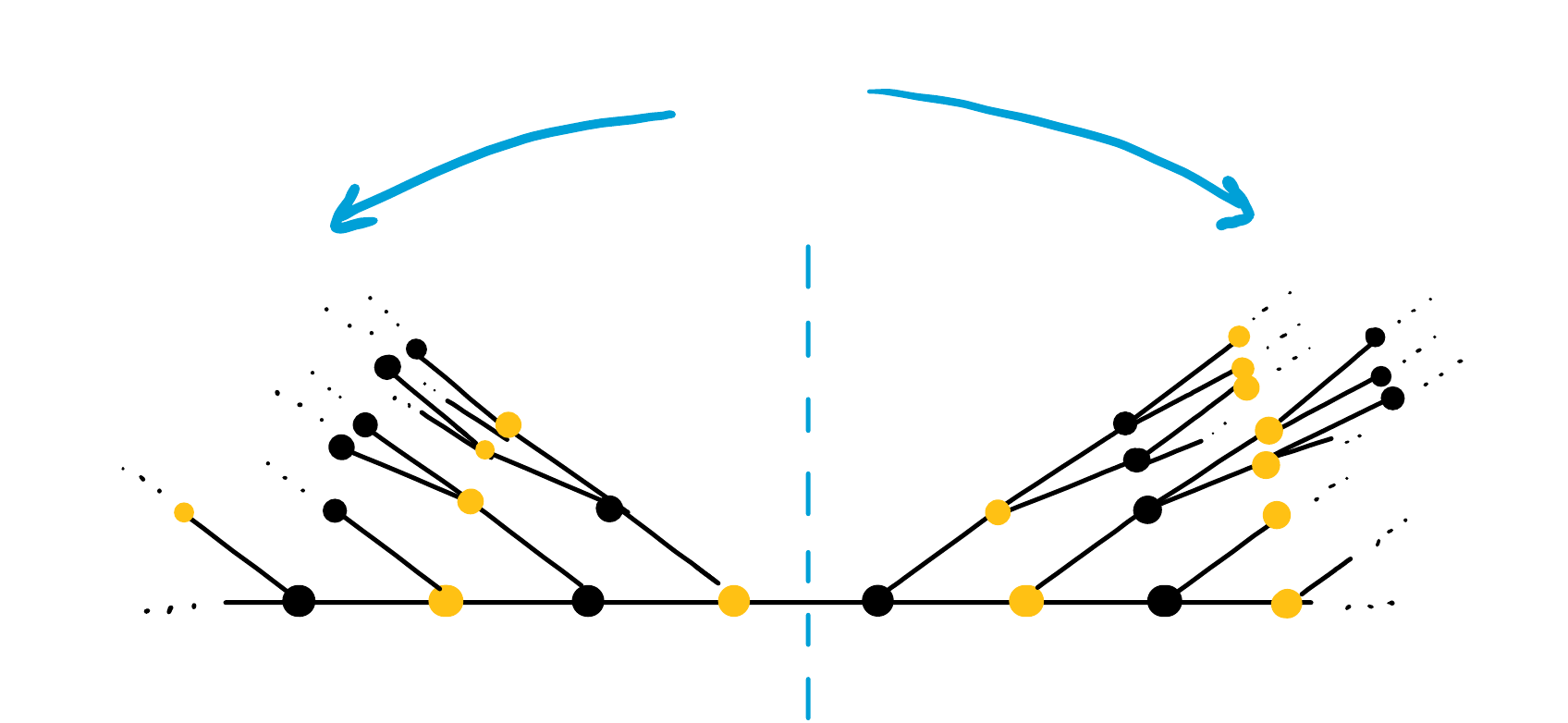}
		\includegraphics[width=0.3\textwidth]{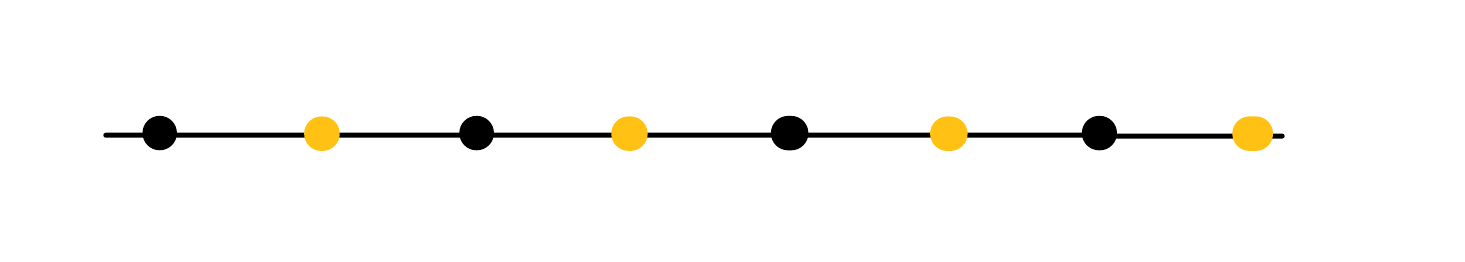}
	\end{center}
	\caption{The retraction based at the fundamental alcove flattens the tree outwards. }
	\label{fig:retraction1}
\end{figure}

\begin{example}[Trees again. And retractions.]
We return to the tree $T$ associated with $SL_2(\Q_p)$. The goal here is to explain what the retractions based at an alcove, respectively at infinity, look like in this tree case. 

First look at the retraction $r:X\to A_0$ based at the fundamental alcove shown in blue in Figure~\ref{fig:retraction1}. The blue alcove is the same as the one labeled $c_0$ in Figure~\ref{fig:tree_p2}. This retraction flattens the whole tree onto $A_0$ outwards and away from the blue alcove. Every bi-infinite line in the tree that contains the blue edge has one side that sticks out to the left (and up) and one side sticking out to the right (and up) of that blue alcove. Any such line is laid flat on top of the horizontal line $A_0$. 
For all $t\in T$ one has $r^{-1}({W_0}.t) = K.t$ and $r^{-1}(t)=I.t$, where $K\define\Stab(\lambda_0)$ is the stabilizer of the origin and  $I\define \Stab(\mathbb{1})$ stabilizes the fundamental alcove in $A_0$.
	
A Weyl chamber in the tree is just a ray and hence the retraction based at a class at infinity is defined by a boundary point of $T$. Take as point at infinity the direction represented by the pink dot which was labeled $\infty$ in Figure~\ref{fig:tree_p2}. The corresponding retraction flattens the entire tree onto $A_0$ away from the chosen point at infinity. For the boundary point labeled $\infty$ representing the anti-dominant direction in $A_0$ the effect of the retraction is illustrated in Figure~\ref{fig:retraction_infty}.  
For all $t\in T$ one has $\rho_{\infty}^{-1}(t)=U.t$, where $U=U(F)=Stab_G(\infty)$ is the subgroup of $G$ which stabilizes the direction labeled $\infty$. 
\end{example}

\begin{figure}[htb]
	\begin{center}
		\scalebox{-1}[1]{\includegraphics[width=0.33\textwidth]{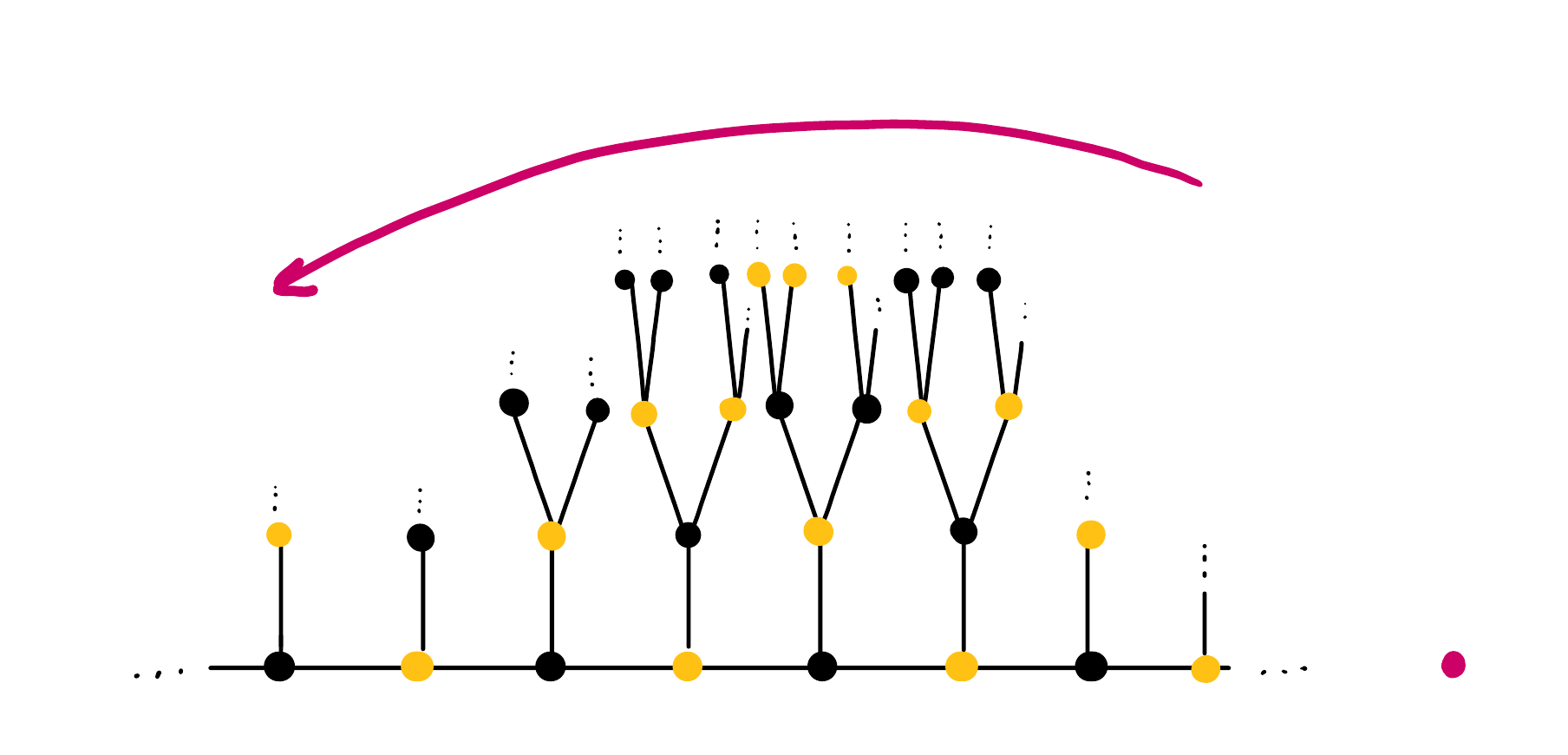}}
		\scalebox{-1}[1]{\includegraphics[width=0.36\textwidth]{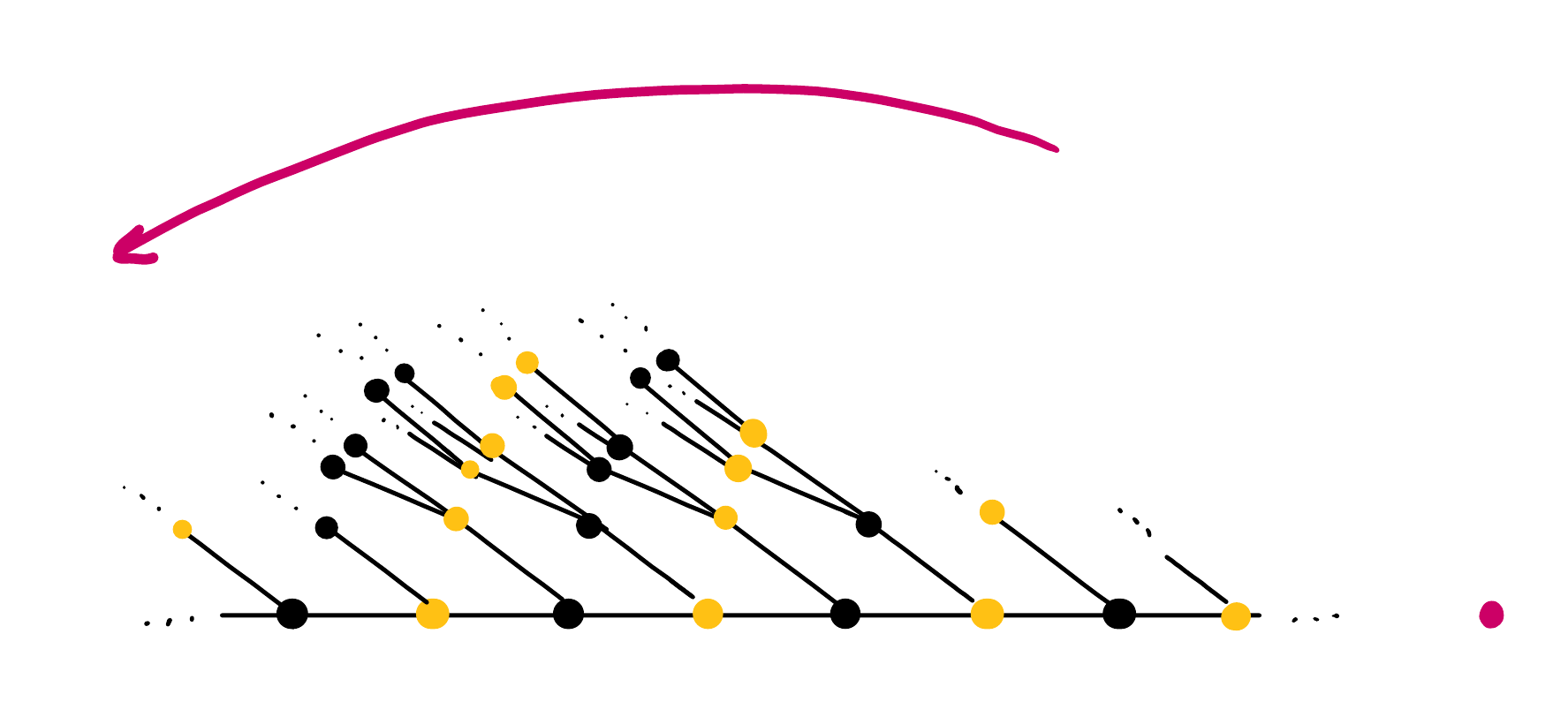}}
		\includegraphics[width=0.3\textwidth]{A1}
	\end{center}
	\caption{The retraction of a tree based at a Weyl chamber at infinity, indicated by the pink dot to the left of the horizontal line, flattens the tree away from that direction at infinity. }
	\label{fig:retraction_infty}
\end{figure}

These retractions are closely related to orbits of subgroups of $G$ as well as to (folded) galleries. Call $U$ the unipotent radical of the Borel subgroup of $G$. Then one has the following. This group is the stabilizer of the opposite fundamental Weyl chamber and its orbit has a natural geometric interpretation provided by the next proposition. 

\begin{prop}[{\cite[Prop. 1 and 6]{GaussentLittelmann}}]\label{prop:retraction1}
Let $\rho_{\infty}$ be the retraction with respect to the class $\partial C$ of the antidominant Weyl  chamber. Then  fibers of $\rho_{\infty}:X\to A_0$ are the $U(F)$ orbits on $X$.
Moreover, $\rho_{\infty}$ induces a map $\hat\rho_{\gamma_\lambda}$ from the set of  all galleries of a fixed type $\tau(\gamma_\lambda)$ in $X$ to the galleries of the same type in $A$. Its fibers $C(\delta)= \hat\rho_{\gamma_\lambda}^{-1}(\delta)$, for a fixed gallery $\delta$ of type $\tau(\gamma_\lambda)$ starting at the origin, are locally closed subvarieties isomorphic to an affine space. Their dimension can be computed in terms of dimensions of positively folded galleries.
\end{prop}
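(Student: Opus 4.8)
The plan is to treat the three assertions — the identification of the fibers of $\rho_\infty$ with $U(F)$-orbits, the affine-space structure of the gallery fibers $C(\delta)$, and the dimension formula — in turn, since each rests on a different piece of structure theory.

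First I would establish that the fibers of $\rho_\infty\colon X\to A_0$ are the $U(F)$-orbits. The key input is that $U=U(F)$ is exactly the stabilizer $\Stab_G(\partial C)$ of the antidominant chamber at infinity. The $U$-invariance of $\rho_\infty$ is then formal: given an alcove $d$ and $u\in U$, choose an apartment $A'$ containing $d$ together with a Weyl chamber representing $\partial C$; then $uA'$ contains $ud$ and the Weyl chamber $u\cdot(\ldots)$, which still represents $\partial C$ because $u$ fixes $\partial C$, and the defining isomorphisms $A'\to A_0$ and $uA'\to A_0$ agree on the shared germ at infinity, so $\rho_\infty(ud)=\rho_\infty(d)$. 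For the converse — that a fiber is a single orbit — I would invoke the Iwasawa decomposition $G(F)=\bigsqcup_\lambda U(F)\,t^\lambda\,G(\mathcal O)$: reading this off on the vertices $g\,G(\mathcal O)$ of $X$, the coweight $\lambda$ with $g\in U(F)\,t^\lambda\,G(\mathcal O)$ is precisely $\rho_\infty(g\,G(\mathcal O))$, so the fiber over $t^\lambda$ is the orbit $U(F)\,t^\lambda\,G(\mathcal O)/G(\mathcal O)$. In short, this assertion is the building-theoretic reformulation of Iwasawa, and the transitivity of $U$ on a fiber is its ``uniqueness'' half.

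Next I would define $\hat\rho_{\gamma_\lambda}$ and check that it lands among galleries of the same type. Applying $\rho_\infty$ alcove-by-alcove and panel-by-panel to a gallery in $X$ produces a sequence in $A_0$; since retractions are type-preserving simplicial maps sending adjacent alcoves to adjacent-or-equal alcoves, the image is again a combinatorial gallery of type $\type(\gamma_\lambda)$, now possibly folded because a crossing in $X$ may collapse to a fold in $A_0$. Here I would recall that the folds so produced are positive with respect to the Weyl chamber orientation $\phi_{\partial C}$ attached to $\sigma=\partial C$, so that $\hat\rho_{\gamma_\lambda}$ takes values in the positively folded galleries in $A_0$. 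The heart is then the fiber analysis. Fix a positively folded gallery $\delta=(\lambda_0,c_0,p_1,c_1,\dots,p_n,c_n,\lambda_n)$ in $A_0$ and build a preimage gallery in $X$ one step at a time, tracking at each panel $p_i$ the admissible continuations lying over $c_i$. The count is governed by the three local types of each step, exactly as in Definitions~\ref{def:load-bearing} and \ref{def:dimension}: at a \emph{load-bearing crossing} (across a hyperplane from its $\phi_{\partial C}$-negative to its positive side) the lifts are parametrized by a single affine root subgroup, contributing one affine line; at a \emph{positive fold} the possible unfoldings in $X$ again contribute one affine line; and at a non-load-bearing crossing the lift is unique, contributing nothing. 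Assembling these choices exhibits $C(\delta)$ as an iterated affine-space bundle, hence isomorphic to an affine space, with $\dim C(\delta)$ equal to the number of load-bearing pairs $(p_i,H)$, that is, to the gallery dimension of $\delta$ in the sense of Definition~\ref{def:dimension}. Local closedness follows by realizing the relevant alcoves inside the affine Grassmannian $\mathcal G$ and intersecting Schubert-type locally closed pieces.

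I expect the main obstacle to be the local dimension bookkeeping in this last step: proving that a load-bearing crossing and a positive fold each contribute exactly a one-dimensional family, and that these families glue into a genuine affine space rather than merely an affine-space bundle of the predicted rank. This requires the explicit action of the (affine) root subgroups on the building and a careful use of the Iwahori–Bruhat combinatorics to rule out extra identifications; it is also the point at which the positivity of the folds — guaranteed by the orientation $\phi_{\partial C}$ — is essential, since a negative fold would have an empty or higher-dimensional fiber and would destroy the clean affine-space picture.
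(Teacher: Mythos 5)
The survey states this proposition as a citation to \cite[Prop.~1 and 6]{GaussentLittelmann} and gives no proof of its own, so there is no in-paper argument to compare against; what you have written is, in outline, the argument of the cited source. Your first paragraph (the fibers of $\rho_\infty$ as $U(F)$-orbits via the stabilizer of $\partial C$ and the Iwasawa decomposition, with transitivity of $U(F)$ on apartments containing a subsector of the antidominant chamber supplying the ``uniqueness'' half) is the standard and correct proof. Your second paragraph is likewise the right mechanism: the panel-by-panel lifting, with the local count governed exactly by the trichotomy of Definition~\ref{def:load-bearing}, is how Gaussent--Littelmann obtain the cell structure and the identity $\dim C(\delta)=\dim(\delta)$.

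Two points deserve sharpening. First, the clean local statement underlying everything is: for a panel $p'$ of $X$ lying over a wall $H$ of $A_0$, exactly \emph{one} alcove containing $p'$ retracts to the alcove of $A_0$ on the side of $H$ facing $\partial C$, while \emph{all the others} (an affine line, a torsor under an affine root subgroup) retract to the alcove on the far side. It is worth stating this explicitly, since all three cases of your bookkeeping (unique lift at a non-load-bearing crossing, one affine line at a load-bearing crossing, one affine line at a positive fold) follow from it, and since it is also where one sees that a \emph{negative} fold admits only the stuttering lift --- so your aside that a negative fold ``would have an empty or higher-dimensional fiber'' should read ``empty among non-stuttering lifts, and a single stuttering lift otherwise''; it is never higher-dimensional. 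Second, whether $C(\delta)$ is literally an affine space or rather a product $\mathbb{C}^{a}\times(\mathbb{C}^*)^{b}$ depends on whether the galleries of type $\tau(\gamma_\lambda)$ in $X$ are allowed to stutter: with stuttering allowed (as in the Bott--Samelson variety and as permitted by Definition~\ref{def:gallery}), a positive fold contributes a full $\mathbb{A}^1$ and one gets an affine space; if one insists on non-stuttering lifts, a positive fold contributes only $\mathbb{A}^1\smallsetminus\{0\}$. You correctly identify this local dimension bookkeeping as the crux, but the resolution is the dichotomy just described rather than a delicate gluing issue.
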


Fix a type $\tau$ and denote by $\hat{\mathcal{G}}_\tau$ the minimal galleries in $X$ of type $\tau$ starting at the origin in $\lambda_0$ and write $\mathcal{G}_\tau$ for the set of end-vertices of elements in $\hat{\mathcal{G}}_\tau$.  Then one has the following.

\begin{prop}\label{prop:retractions2}
	Let $\lambda$ be a vertex in the base apartment $A_0$ in $X$ and suppose that the minimal vertex-to-vertex gallery from the origin to $\lambda$ is of type $\tau$. Then 
	\[r_{A_0,c_0}^{-1}(\sW.\lambda)=\mathcal{G}_\tau. \]
	In case $X$ is of algebraic origin the group $K=G(\mathcal{O})$ is the $G$-stabilizer of the origin in $X$ and
	\[r_{A_0,c_0}^{-1}(\sW.\lambda)=K.\lambda\]
\end{prop}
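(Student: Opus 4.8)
The plan is to establish the first equality $r_{A_0,c_0}^{-1}(\sW.\lambda)=\mathcal{G}_\tau$ for arbitrary affine buildings and then to upgrade it to the second equality in the algebraic case. Throughout I write $r\define r_{A_0,c_0}$ and use that $r$ fixes $A_0$ pointwise, is type-preserving, and that on any apartment $A'$ containing $c_0$ it restricts to the isomorphism $A'\to A_0$ fixing $A_0\cap A'$ supplied by the third building axiom. The combinatorial backbone is the following apartment statement, which I would prove first: \emph{inside $A_0$ the end-vertices of the minimal vertex-to-vertex galleries of type $\tau$ starting at $\lambda_0$ are exactly the orbit $\sW.\lambda$.} One inclusion is immediate, since $\sW=\Stab(\lambda_0)$ acts by type-preserving automorphisms fixing $\lambda_0$ and hence carries the reference gallery from $\lambda_0$ to $\lambda$ to minimal galleries of the same type ending at $w.\lambda$; the converse uses transitivity of $\sW$ on the alcoves containing $\lambda_0$ together with the uniqueness of a minimal gallery of a given (reduced) type from a fixed start alcove recorded in the proposition of Section~\ref{subsec:foldsandwords}.

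For $r^{-1}(\sW.\lambda)\subseteq\mathcal{G}_\tau$ I would argue by lifting. Given a vertex $\mu$ with $r(\mu)=\nu\in\sW.\lambda$, choose (second building axiom) an apartment $A'$ containing both $c_0$ and $\mu$; then $\lambda_0\in A'$ and $r|_{A'}\colon A'\to A_0$ is an isomorphism fixing $\lambda_0$ and sending $\mu$ to $\nu$. Pulling the minimal type-$\tau$ gallery from $\lambda_0$ to $\nu$ (which exists by the apartment statement) back through $(r|_{A'})^{-1}$ yields a gallery in $A'\subseteq X$ of type $\tau$ from $\lambda_0$ to $\mu$; it is minimal in $A'$, hence minimal in $X$ because gallery distance in a building agrees with that computed in any apartment containing the endpoints. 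Thus $\mu\in\mathcal{G}_\tau$. For the reverse inclusion I would take a minimal gallery $\hat\gamma$ of type $\tau$ from $\lambda_0$ to $\mu$ and apply $r$: since $r$ is type-preserving and, on minimal galleries issuing from the origin, distance preserving, $r(\hat\gamma)$ is again a minimal type-$\tau$ gallery from $\lambda_0$, now contained in $A_0$, so its end-vertex $r(\mu)$ lies in $\sW.\lambda$ by the apartment statement.

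In the algebraic case I would replace $\mathcal{G}_\tau$ by $K.\lambda$ with $K=\Stab(\lambda_0)=G(\mathcal O)$. The inclusion $K.\lambda\subseteq\mathcal{G}_\tau$ is clean: $K$ fixes $\lambda_0$ and acts by type-preserving automorphisms, so it permutes the set $\hat{\mathcal{G}}_\tau$, hence its set of end-vertices $\mathcal{G}_\tau$, which contains $\lambda$. For $\mathcal{G}_\tau\subseteq K.\lambda$ I would invoke strong transitivity: given $\mu\in\mathcal{G}_\tau$ with minimal gallery $\hat\gamma$, enclose $\hat\gamma$ in an apartment $A'$ with start alcove $c_0'\ni\lambda_0$, and use that $G$ is transitive on pairs of an apartment and an alcove in it. Any $g\in G$ realizing $(A_0,c_0)\mapsto(A',c_0')$ is type-preserving, so it fixes the vertex of $c_0'$ of type $\tau(\lambda_0)$, namely $\lambda_0$, and therefore lies in $K$; by the uniqueness of a minimal type-$\tau$ gallery from a fixed start alcove it carries the reference gallery onto $\hat\gamma$, whence $\mu=g.\lambda\in K.\lambda$. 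Combining the two equalities gives the statement.

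The main obstacle I anticipate is exactly the step I glossed over in the second paragraph: the interplay between \emph{minimal vertex-to-vertex} and \emph{minimal alcove-to-alcove} galleries. Retractions based at an alcove preserve gallery distance from $c_0$, but a minimal vertex-to-vertex gallery from $\lambda_0$ need not have a minimal underlying alcove gallery, and its start alcove need not be $c_0$; consequently the assertions that $r$ preserves minimality of galleries from the origin (used for $\mathcal{G}_\tau\subseteq r^{-1}(\sW.\lambda)$) and that a minimal type-$\tau$ gallery is determined by its start alcove both require the careful bookkeeping carried out in \cite{MST} rather than a one-line appeal to standard retraction properties. Once that technical point is settled, all remaining steps are formal consequences of the building axioms and strong transitivity.
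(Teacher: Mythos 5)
The paper states Proposition~\ref{prop:retractions2} without proof --- it is presented as a known consequence of \cite{GaussentLittelmann} (compare Propositions 1 and 6 there), with the surrounding Remark explicitly deferring the delicate comparison of minimal vertex-to-vertex versus minimal alcove-to-alcove galleries to \cite{MST}. So there is no in-paper argument to measure you against; judged on its own, your proposal is sound and follows the standard route one would expect: reduce to the apartment statement that the end-vertices of minimal type-$\tau$ galleries from $\lambda_0$ inside $A_0$ are exactly $\sW.\lambda$, lift along $(r|_{A'})^{-1}$ for one inclusion, push forward along $r$ for the other, and use strong transitivity plus the uniqueness of an unfolded gallery of a given type with a given start alcove for the identification with $K.\lambda$. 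You have also correctly isolated the only genuinely nontrivial step, namely that $r_{A_0,c_0}$ carries a minimal vertex-to-vertex gallery issuing from $\lambda_0$ (whose first alcove need not be $c_0$, and whose underlying alcove gallery need not be minimal from $c_0$) to a minimal gallery of the same type; this is exactly the point the survey itself outsources (it invokes Proposition 3.3 of \cite{Convexity} for the analogous unfolding statement in the proof of Proposition~\ref{prop:shadowsretractions}), so flagging it and citing \cite{MST}/\cite{GaussentLittelmann} is the right call rather than a gap. One small bookkeeping slip in your last paragraph: the reference minimal gallery from $\lambda_0$ to $\lambda$ need not start at $c_0$, so the element $g$ supplied by strong transitivity should carry the pair (apartment, \emph{start alcove of the reference gallery}) to (an apartment containing $\hat\gamma$, start alcove of $\hat\gamma$); with that adjustment the conclusion $g\in K$ and $\mu=g.\lambda$ goes through as you describe.
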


This connection between orbits, galleries and retractions was crucial for the main results in \cite{GaussentLittelmann} but also for the proof of the main theorem in \cite{Convexity} explained in \Cref{sec:Kostant}. 

Similar connections for alcove-to-alcove galleries with Iwahori-subgroup orbits exist (see \cite{PRS} and \cite{MNST}) and we will highlight them in \Cref{sec:ADLV}. 

The connection with shadows is the following. See also Figure~\ref{fig:vertex_shadow} for an example.  

\begin{prop}[Shadows and retractions]\label{prop:shadowsretractions}
	With notation as in \ref{prop:retractions2} suppose that $\lambda$ is an antidominant vertex. Then  
	\[\Shadow^\vee_{\infty}(\lambda)=\rho_{\infty}(r_{A_0,c_0}^{-1}(\lambda)).\]
	In terms of groups one gets
	\[\mu\in\Shadow^\vee_{\infty}(\lambda) \Longleftrightarrow \exists k\in K\text{ and } u\in U(F) \text{ with } uk\mu=\lambda.\] 
\end{prop}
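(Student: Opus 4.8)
The plan is to establish the set-theoretic equality $\Shadow^\vee_\infty(\lambda)=\rho_\infty(r_{A_0,c_0}^{-1}(\lambda))$ first, and then to read off the group-theoretic statement by feeding in the fiber descriptions of the two retractions. Both sides are controlled by galleries, and the argument rests on two facts that are already available: the alcove retraction $r_{A_0,c_0}$ is type-preserving and sends minimal galleries starting at $c_0$ to minimal galleries starting at $c_0$, whereas the retraction at infinity $\rho_\infty$ sends galleries to $\phi_\infty$-positively folded galleries and, by \Cref{prop:retraction1}, is surjective onto them because its fibers $C(\delta)$ are nonempty (being affine spaces). Throughout I write $\tau$ for the type of the minimal vertex-to-vertex gallery $\gamma$ from the origin $\lambda_0$ to $\lambda$, so that by definition $\Shadow^\vee_\infty(\lambda)$ is the set of end-vertices of $\phi_\infty$-positively folded galleries of type $\tau$ starting at $\lambda_0$.

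For the inclusion $\rho_\infty(r_{A_0,c_0}^{-1}(\lambda))\subseteq\Shadow^\vee_\infty(\lambda)$ I would start from a vertex $v$ with $r_{A_0,c_0}(v)=\lambda$. Since $\lambda\in\sW.\lambda$, \Cref{prop:retractions2} gives $v\in r_{A_0,c_0}^{-1}(\sW.\lambda)=\mathcal{G}_\tau$, so $v$ is the end-vertex of some minimal gallery $\hat\gamma$ of type $\tau$ in $X$ starting at $\lambda_0$. Applying $\rho_\infty$ to $\hat\gamma$ produces a gallery of type $\tau$ in $A_0$ that starts at $\lambda_0$ (which $\rho_\infty$ fixes), ends at $\rho_\infty(v)$, and is $\phi_\infty$-positively folded by the defining compatibility between the retraction at infinity and the Weyl chamber orientation $\phi_\infty$. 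Hence $\rho_\infty(v)\in\Shadow^\vee_\infty(\lambda)$.

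For the reverse inclusion I would take $\mu\in\Shadow^\vee_\infty(\lambda)$, realized by a $\phi_\infty$-positively folded gallery $\delta$ of type $\tau$ from $\lambda_0$ to $\mu$. By the surjectivity in \Cref{prop:retraction1} (the fiber $C(\delta)$ is a nonempty affine space) there is a minimal gallery $\hat\gamma$ of type $\tau$ in $X$ with $\rho_\infty(\hat\gamma)=\delta$, whose end-vertex $v$ satisfies $\rho_\infty(v)=\mu$. To conclude $v\in r_{A_0,c_0}^{-1}(\lambda)$ I would retract $\hat\gamma$ the other way: since $r_{A_0,c_0}$ preserves type and minimality, $r_{A_0,c_0}(\hat\gamma)$ is a minimal gallery of type $\tau$ in $A_0$ starting at $\lambda_0$, and in the Coxeter complex $A_0$ a minimal gallery is determined by its starting simplex and its type, so $r_{A_0,c_0}(\hat\gamma)=\gamma$ and therefore $r_{A_0,c_0}(v)=\lambda$. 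This is the step where antidominance of $\lambda$ enters: it guarantees that $\gamma$ runs into the chamber defining $\phi_\infty$, so that the minimal galleries of type $\tau$ are exactly the objects of \Cref{prop:retraction1} whose $\rho_\infty$-images exhaust the $\phi_\infty$-positively folded galleries of type $\tau$ (rather than only part of them).

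Finally, to obtain the group-theoretic form I would rewrite each membership as a meeting of fibers: $\mu\in\rho_\infty(r_{A_0,c_0}^{-1}(\lambda))$ precisely when $\rho_\infty^{-1}(\mu)\cap r_{A_0,c_0}^{-1}(\lambda)\neq\emptyset$. By \Cref{prop:retraction1} the first fiber is the orbit $U(F).\mu$, so the condition becomes the existence of $u\in U(F)$ with $u.\mu\in r_{A_0,c_0}^{-1}(\lambda)$. Unwinding \Cref{def:alcoveretractions}, a relation $r_{A_0,c_0}(w)=\lambda$ produces an apartment $A'$ containing $c_0$ and $w$ together with the isomorphism $A'\to A_0$ that fixes $c_0$ and sends $w\mapsto\lambda$; since $\Stab(c_0)\subseteq K=G(\mathcal{O})$ acts transitively on the apartments containing $c_0$ and a type-preserving automorphism of a Coxeter complex fixing one alcove is the identity, this isomorphism is realized by some $k\in K$ with $k.w=\lambda$. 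Combining the two fiber conditions then yields $k\in K$ and $u\in U(F)$ with $k u.\mu=\lambda$, i.e.\ the statement that the $U(F)$-orbit of $\mu$ meets the $K$-orbit of $\lambda$, which is exactly the displayed group-theoretic characterization; the reverse implication follows by running the same identifications backwards. I expect the main obstacle to be the geometric heart cited above, namely that $\rho_\infty$ turns minimal galleries into $\phi_\infty$-positively folded ones and hits all of them, together with the bookkeeping that isolates the single antidominant vertex $\lambda$ (and not its whole orbit $\sW.\lambda$) on the alcove-retraction side.
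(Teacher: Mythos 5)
Your overall strategy is the paper's: the forward inclusion feeds $r_{A_0,c_0}^{-1}(\sW.\lambda)=\mathcal{G}_\tau$ from Proposition~\ref{prop:retractions2} into $\rho_{\infty}$, which sends minimal galleries to positively folded ones, and the reverse inclusion rests on unfolding a positively folded gallery of type $\tau$ to a minimal gallery in $X$. For the unfolding the paper cites Proposition~3.3 of \cite{Convexity}, whereas you extract it from the nonemptiness of the fibers $C(\delta)$ in Proposition~\ref{prop:retraction1}; that is a reasonable substitute, but note that nonemptiness of $C(\delta)$ only hands you \emph{some} gallery of type $\tau$ in $X$ retracting to $\delta$, and you need the preimage to be \emph{minimal} before Proposition~\ref{prop:retractions2} applies to its end-vertex.

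The genuine gap is the step pinning the end-vertex $v$ of the unfolded gallery down to $r_{A_0,c_0}(v)=\lambda$ rather than merely $r_{A_0,c_0}(v)\in\sW.\lambda$. You assert that $r_{A_0,c_0}$ preserves minimality and that a minimal gallery in $A_0$ is determined by its starting simplex and its type. For vertex-to-vertex galleries the latter is false: a gallery starting at the vertex $\lambda_0$ is determined by its type together with its \emph{first alcove}, and that first alcove can be any alcove containing $\lambda_0$. Indeed, the unfolded galleries of type $\tau$ from $\lambda_0$ inside $A_0$ end precisely at the points of the orbit $\sW.\lambda$ (this is Proposition~\ref{prop:retractions2} read inside $A_0$); already in type $\tilde A_1$ the two minimal galleries from $0$ to $\lambda$ and to $-\lambda$ have the same type. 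So your argument only yields $v\in r_{A_0,c_0}^{-1}(\sW.\lambda)=K.\lambda$. That is in fact all the paper's own proof establishes --- it works with the full orbit $\sW.\lambda$ throughout --- so either the displayed equality should be read with $\sW.\lambda$ in place of $\lambda$, or an additional argument is needed to choose the unfolding so that it retracts onto $\lambda$ itself; your proposal does not supply one. Finally, your fiber computation correctly produces the condition $U(F).\mu\cap K.\lambda\neq\emptyset$, i.e.\ $ku\mu=\lambda$; be aware that this is the order $ku$, not the order $uk$ printed in the statement, and since $U$ and $K$ do not commute the two conditions are not interchangeable without comment.
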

\begin{proof}
	Since $\lambda$ is anti-dominant its $K$-orbit contains the $\sW$-orbit of $\lambda$ inside the base apartment $A_0$. Thus $K.\lambda=r_{A,c_0}^{-1}(\sW.\lambda)$ in this situation. This collection of vertices is, by Proposition~\ref{prop:retractions2}, the set of end-vertices of all minimal galleries inside the building $X$ starting at the origin $\lambda_0$ that are of the same type $\tau$ as a minimal gallery from $\lambda_0$ to $\lambda$. Applying $\rho$ each such gallery gets mapped to a positively folded gallery. Its end-vertex is hence in $\Shadow^\vee_{\infty}(\lambda)$. To see the converse one needs to observe that every positively folded gallery of type $\tau$ unfolds to a minimal gallery in the building by taking the pre-image under ${\rho}_{\infty}$. This follows from Proposition 3.3 in  \cite{Convexity}. 
\end{proof}

\begin{figure}[htb]
	\begin{center}
		\includegraphics[width=0.60\textwidth]{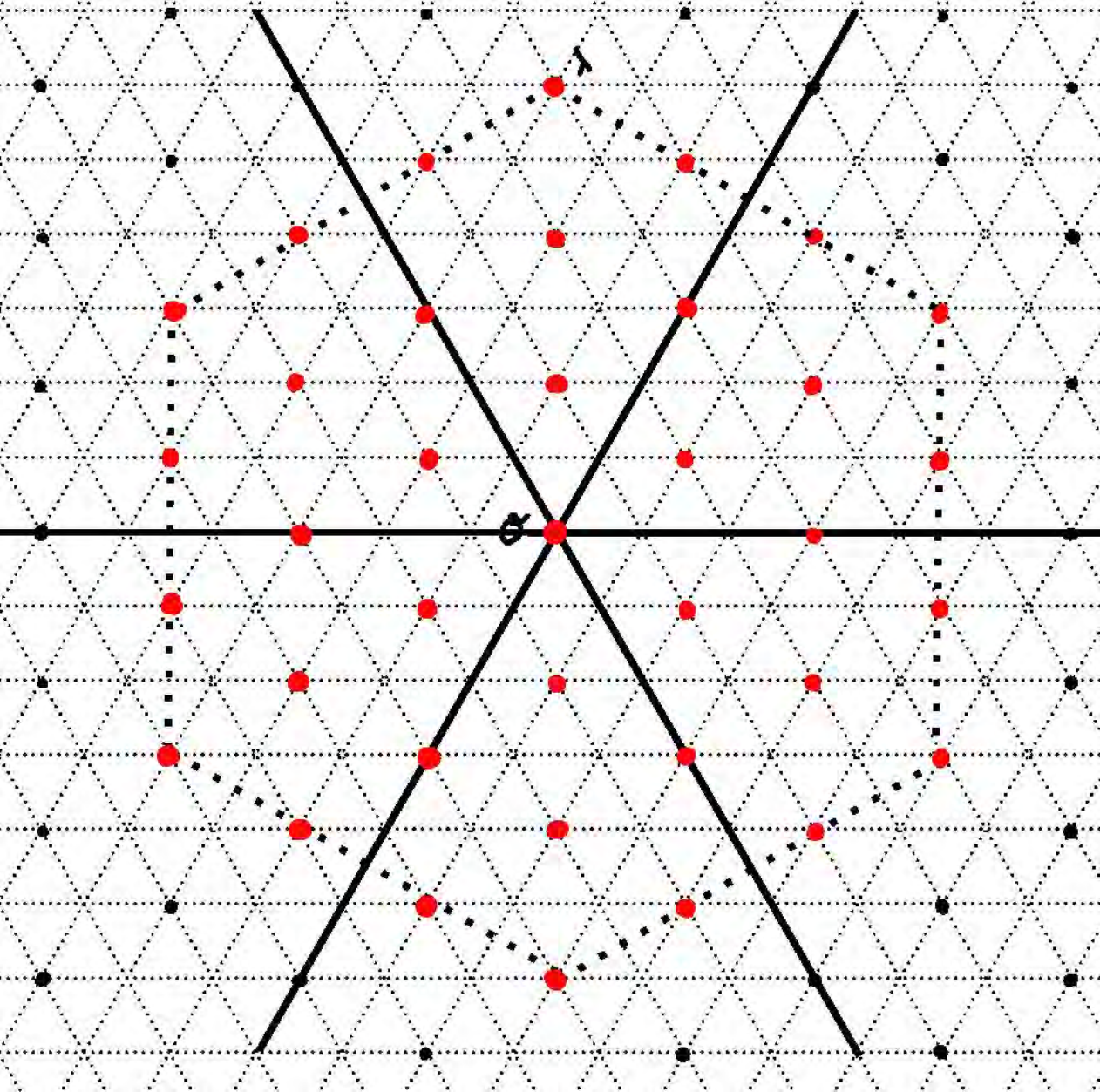}\hspace{1ex}
	\end{center}
\caption{The red vertices are the elements of the vertex shadow of $\lambda$ with respect to the dominant direction (the class of the Weyl chamber containing $\lambda$.}
\label{fig:vertex_shadow}
\end{figure}

Proposition~\ref{prop:shadowsretractions} illustrates that vertex shadows with respect to Weyl chamber orientations arise very naturally as images of balls around the origin under retractions based at infinity. 
It would be interesting to see whether similar characterizations could be shown for shadows with respect to alcove orientations and the retractions based at alcoves.

\section{A first adventure: Kostant convexity }\label{sec:Kostant}

We ended the last section with a statement explaining the red dots in Figure~\ref{fig:vertex_shadow}. There is a second kind of interpretation which is the topic of this section.
Let me continue by telling the algebraic side of the story first. 

Any reductive group $G$ with associated Bruhat-Tits building $X$ decomposes as   
\[G=UTK, \]
where geometrically $U$ is the stabilizer of the parallel class of the antidominant Weyl chamber in the base apartment $A_0$, the group $T$ acts as translations on $A_0$ and $K=G(\mathcal{O})$ is the stabilizer of the origin in $A_0$. The natural projection $pr_T: G\to T$ send a group element $g$ to to its $T$-part, that is the unique element $t$ for which $g=utk$. 

But obviously the subgroup $K$ also acts on the left on $G$. One may hence ask how the $T$-part of a decomposition changes under this left-$K$-action. More precisely one seeks to answer the following question:  

\[\text{For a fixed } t\in T \text{ what is }pr_T(K.t) ?\]  

Fix $t \in T$ and multiply it on the left by some $k\in K$. The product $kt$ itself then can be decomposed as $kt=u't'k'$ for $u'\in U, t'\in T$ and $k'\in K$. Which $t'$ may appear in such a decomposition? 

This question is the natural analog in the context of affine buildings of the classic Kostant convexity theorem for real symmetric spaces. In case of $G=\mathrm{SL}_n(\R)$ similar questions were already studied by Horn and Thompson. The answer for general real Lie groups was given by Kostant in \cite{Kostant}. Historically this result goes back to Schur who showed that the eigenvalues appearing on a diagonal of a matrix are in a convex set. 

Here's what one can prove in algebraic terms. 

\begin{thm}[Convexity theorem, \cite{Convexity}]
\label{thm:KostantConvexity}
With notation as above and $t,t'\in T$ one has 
\[Ut'K\cap KtK\neq \emptyset \Longleftrightarrow t'K\in\conv(\sW.tK). \]
\end{thm}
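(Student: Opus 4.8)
The plan is to translate the algebraic identity into a statement about vertices of the Bruhat--Tits building $X$ and then feed it through the retraction--and--gallery machinery assembled in \Cref{sec:gallerymodel}. Write $\lambda=t.\lambda_0$ and $\mu=t'.\lambda_0$ for the vertices of the base apartment $A_0$ attached to $t$ and $t'$; both are fixed by $\rho_{\infty}$ since they lie in $A_0$. First I would reformulate the left hand side geometrically. Both $KtK$ and $Ut'K$ are unions of right $K$--cosets, so the intersection is nonempty if and only if the corresponding subsets of $G/K$ meet, i.e. if and only if the $K$--orbit of the vertex $\lambda$ meets the $U$--orbit of the vertex $\mu$. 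By Proposition~\ref{prop:retraction1} the fibres of $\rho_{\infty}$ are precisely the $U(F)$--orbits, and since $\mu\in A_0$ we have $\rho_{\infty}^{-1}(\mu)=U.\mu$. Hence $Ut'K\cap KtK\neq\emptyset$ is equivalent to $\mu\in\rho_{\infty}(K.\lambda)$.

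The core of the argument is then to compute $\rho_{\infty}(K.\lambda)$. Because $KtK$ depends only on the orbit $\sW.tK$, I may pick an antidominant representative and assume $\lambda$ antidominant; in that case $K.\lambda=r_{A_0,c_0}^{-1}(\sW.\lambda)$ by Proposition~\ref{prop:retractions2}. Retracting this set via $\rho_{\infty}$ and using that $\rho_{\infty}$ carries minimal galleries to positively folded ones identifies $\rho_{\infty}(K.\lambda)$ with the vertex shadow $\Shadow^{\vee}_{\infty}(\lambda)$, exactly as in Proposition~\ref{prop:shadowsretractions}. It remains to recognise this shadow as a convexity condition. By the proposition identifying the vertex shadow of a dominant vertex with respect to the opposite Weyl chamber orientation as the set of all same-type vertices $\nu$ with $\nu\leq\lambda$ in the dominance order (applied in its antidominant form), and by the classical weight-polytope fact that such a dominance interval, intersected with the relevant coset of the coroot lattice, coincides with $\conv(\sW.\lambda)$ intersected with that coset (see e.g. \cite{Bourbaki}), I obtain $\Shadow^{\vee}_{\infty}(\lambda)=\conv(\sW.\lambda)\cap L$ for the appropriate lattice $L$. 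Chaining the equivalences, $\mu\in\rho_{\infty}(K.\lambda)$ becomes $\mu\in\conv(\sW.\lambda)$, which is $t'K\in\conv(\sW.tK)$.

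The two implications differ sharply in difficulty. The forward direction is soft: a common vertex gives $\mu\in\rho_{\infty}(K.\lambda)$, its preimage gallery retracts to a positively folded gallery, and positive foldedness with respect to the antidominant Weyl chamber orientation forces the endpoint into $\conv(\sW.\lambda)$; only the easy inclusion $\rho_{\infty}(K.\lambda)\subseteq\conv(\sW.\lambda)$ is used. The backward direction carries the real content: given $\mu\in\conv(\sW.\lambda)$ one must produce an honest point of $K.\lambda$ retracting to $\mu$, equivalently a decomposition $kt=u't'k'$. This needs two nontrivial inputs, namely that every lattice point of the polytope is the endpoint of some positively folded gallery of the correct type --- obtained by applying the root operators $e_\alpha,f_\alpha$ to an LS-gallery and using their transitivity on LS-galleries --- and that every such positively folded gallery unfolds to a genuine minimal gallery inside $X$. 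I expect this last lifting step (Proposition~3.3 of \cite{Convexity}), which supplies the surjectivity of $\rho_{\infty}$ onto the shadow, to be the main obstacle, together with the bookkeeping required to keep the dominant/antidominant conventions and the coroot-lattice cosets consistent throughout.
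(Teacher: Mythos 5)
Your proposal is correct and follows essentially the same route as the paper: reduce the double-coset statement to the geometric identity $\rho_{\infty}(K.\lambda)=\conv(\sW.\lambda)$ of Theorem~\ref{thm:geometricConvexity}, prove the easy inclusion by noting that endpoints of positively folded galleries of the given type lie in the convex hull, and prove the hard inclusion by realising every lattice point of the hull as the endpoint of a positively folded gallery (via the Gaussent--Littelmann character formula and root operators) and then unfolding it to a minimal preimage under $\rho_{\infty}$. You also correctly identify the lifting step (Proposition~3.3 of \cite{Convexity}) and the dominant/antidominant bookkeeping as the delicate points, matching the paper's own outline.
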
 

The convex hull appearing in this theorem denotes the standard euclidean convex hull which can also described as the minimal intersection of dual half-apartments containing $\sW.tK$. For details see \cite{Convexity}. 

As $K$ stabilizes the origin one can interpret $T$-left cosets of $K$ as vertices in the base apartment. Then the set of vertices of the same type as $tK$ in $\conv(\sW.tK)$ is precisely the vertex shadow of the element $t^+K$ in the orbit $\sW.tK$ that is contained in the fundamental Weyl chamber. The algebraic statement in Theorem~\ref{thm:KostantConvexity} is  a direct consequence of its geometric analog in terms of retractions in an affine building. 

Recall that in Section~\ref{sec:Littelmann} we learned about two kinds of retractions onto a fixed apartment $A_0$ in a building $X$ and how they relate to $U$ and $K$ orbits, folded galleries and shadows. The next theorem explains how the two retractions interact. Theorem~\ref{thm:KostantConvexity} is a direct consequence of~\ref{thm:geometricConvexity}. 

\begin{thm}[Geometric Convexity theorem, \cite{Convexity}]
\label{thm:geometricConvexity}	
	Let $X$ be an affine building in which every panel is contained in at least three alcoves. Fix an apartment $A_0$ in $X$ with origin $\lambda_0$, base alcove $c_0$ and fundamental Weyl chamber $\Cf$ in $A_0$. Let $\sW$ denote the spherical Weyl group associated with $X$ and fix some special vertex $\lambda$ in $A_0$. Then 
	\[\rho_{\Cf, A}(r^{-1}_{c_0, A}(\sW.\lambda))=\conv(\sW.\lambda)\] 
\end{thm}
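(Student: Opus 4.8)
The plan is to reduce the statement to a computation of a vertex shadow and then prove the resulting set equality by two inclusions. Since both sides depend only on the orbit $\sW.\lambda$, I may assume $\lambda$ is the dominant representative of its orbit. By Proposition~\ref{prop:retractions2} the preimage $r^{-1}_{c_0,A}(\sW.\lambda)$ is exactly the set $\mathcal{G}_\tau$ of end-vertices of all minimal vertex-to-vertex galleries in $X$ of type $\tau$ starting at the origin $\lambda_0$, where $\tau$ is the type of a minimal gallery from $\lambda_0$ to $\lambda$. Retracting with $\rho_{\Cf,A}$ then amounts to retracting each such minimal gallery onto the base apartment $A$; exactly as in the proof of Proposition~\ref{prop:shadowsretractions}, a minimal gallery in $X$ is sent by a retraction at infinity to a gallery of the same type in $A$ that is positively folded with respect to the Weyl chamber orientation $\phi=\phi_{\Cf}$ attached to $\Cf$. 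Consequently the left-hand side equals the vertex shadow $\Shadow^\vee_{\phi}(\lambda)$, and the theorem becomes the identity $\Shadow^\vee_{\phi}(\lambda)=\conv(\sW.\lambda)$, understood as an equality between the image and the set of type-$\tau(\lambda)$ vertices lying in the Euclidean convex hull.

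For the inclusion ``$\subseteq$'' I would argue that the end-vertex of any $\phi$-positively folded gallery of type $\tau$ starting at $\lambda_0$ lies in $\conv(\sW.\lambda)$. This is the comparatively soft direction: unfolding such a gallery by repeated use of the dimension-changing root operators of Gaussent--Littelmann (the properties collected after Theorem~\ref{thm:character}) turns it into a minimal gallery of the same type ending at a point of the orbit $\sW.\lambda$, and each individual fold only reflects a tail of the gallery across a wall, so the end-vertex is obtained from a vertex of $\sW.\lambda$ by operations that cannot leave the convex hull. Equivalently, one may invoke the identification of vertex shadows with weight sets furnished by Theorem~\ref{thm:character}, together with the classical fact that the weights of the irreducible representation $V(\lambda)$ are precisely the type-$\tau(\lambda)$ vertices in $\conv(\sW.\lambda)$.

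The inclusion ``$\supseteq$'' is where the thickness hypothesis --- every panel contained in at least three alcoves --- and the building structure enter decisively, and I expect this to be the main obstacle. Given a type-$\tau(\lambda)$ vertex $\mu\in\conv(\sW.\lambda)$, the first task is purely combinatorial: to produce inside the apartment $A$ a $\phi$-positively folded gallery of type $\tau$ from $\lambda_0$ to $\mu$, which is guaranteed by the recursive, LS-gallery description of shadows. The crucial and more delicate step is then to \emph{lift} this folded gallery to a genuine minimal gallery in $X$ whose $\rho_{\Cf,A}$-image is the chosen folded gallery: one opens the folds one at a time, and at each positive fold the hypothesis that every panel lies in at least three alcoves provides an alcove in a third apartment into which the gallery can be straightened without disturbing the already-processed initial segment. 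This unfolding procedure is exactly Proposition~3.3 of \cite{Convexity} that was invoked in the proof of Proposition~\ref{prop:shadowsretractions}; making it work uniformly over all folds, and checking that the resulting lifted gallery is minimal and retracts back to the original, is the technical heart of the argument.
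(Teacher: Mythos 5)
Your proposal follows essentially the same route as the paper's own outline: reduce to a statement about positively folded galleries via the two retractions, show every vertex of $\conv(\sW.\lambda)$ is the end-vertex of a positively folded gallery of the right type using the Gaussent--Littelmann character formula, show conversely that such end-vertices cannot leave the convex hull, and finally lift each positively folded gallery to a minimal preimage in the thick building (the paper's appeal to Proposition~3.3 of \cite{Convexity}). The decomposition into these three steps and the role you assign to the thickness hypothesis match the argument sketched after Theorem~\ref{thm:KostantConvexity}.
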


It is worth mentioning that this geometric result is in fact more general than its algebraic counterpart, since not every affine building is of algebraic origin. 
This geometric version also generalizes to non-discrete euclidean  buildings and more generally $\Lambda$-buildings, where $\Lambda$ is an ordered abelian group. See \cite{Convexity2}.

\begin{figure}[htb]
	\begin{center}
		\includegraphics[width=0.6\textwidth]{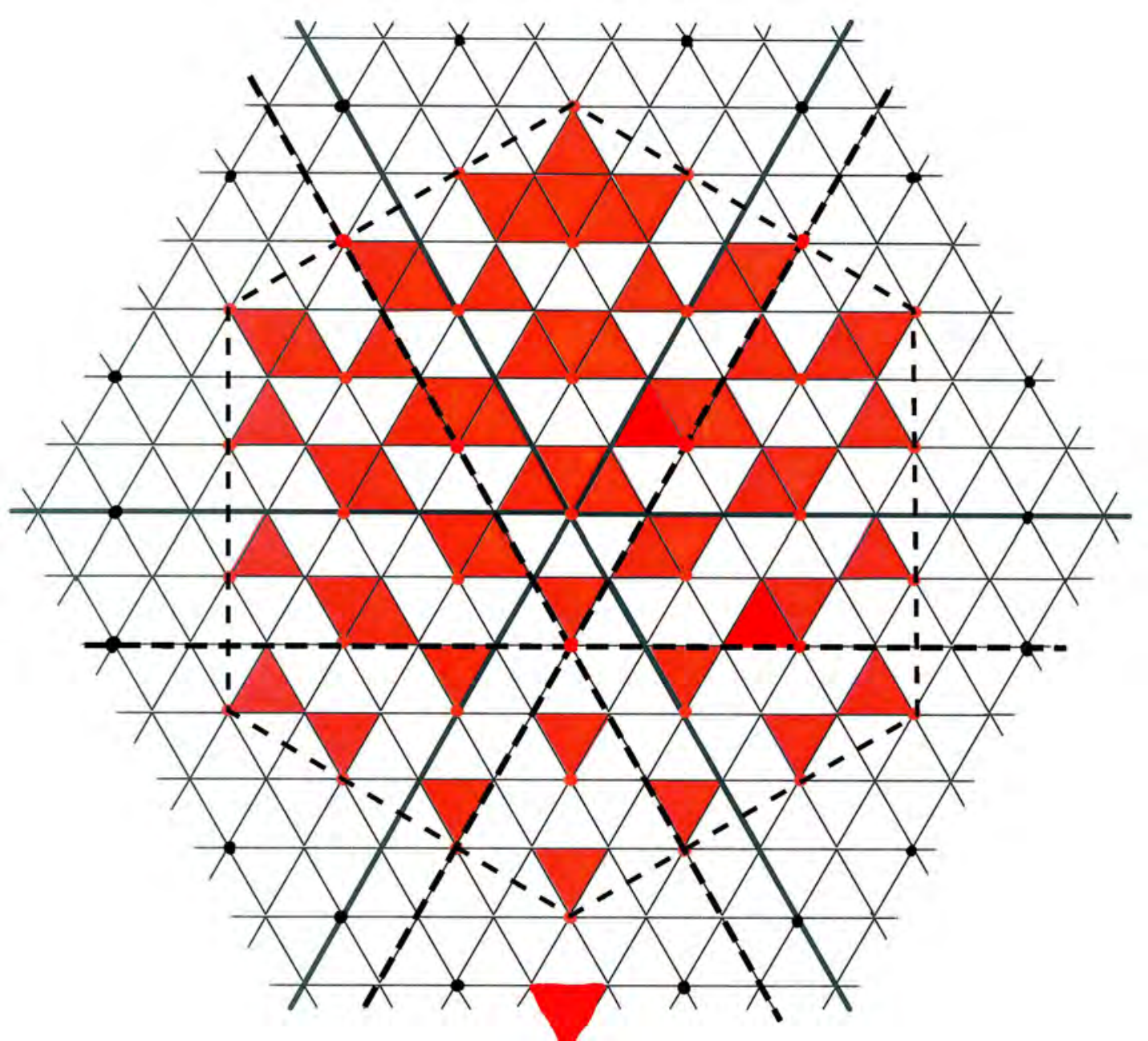}
	\end{center}
	\caption{The shadow of an alcove. }
	\label{fig:YINshadow}
\end{figure}

The connection to galleries becomes clear when one looks at the proof of Theorem~\ref{thm:KostantConvexity}. 	
The two main steps in that proof are the following. 

For simplicity suppose that $tK$ is a dominant vertex in $A_0$ and let $\gamma$ be a minimal gallery from $\lambda_0$ to $tK$. 
On the one hand one needs to show that every vertex in $\conv(\sW.tK)$ is the end-vertex of a positively folded gallery of the same type as $\gamma$. The proof of this statement uses the  character formula for highest weight representations of \cite{GaussentLittelmann} we have highlighted in Section~\ref{sec:Littelmann}. 
Moreover, the endpoint of any positively folded gallery of the same type as $\gamma$ is contained in $\conv(\sW.tK)$. 
Finally one shows that every such positively folded gallery has a minimal preimage under $\rho$. 

It is only natural to ask what happens if one replaces vertices by alcoves. Algebraically this corresponds to considering the stabilizer $I$ of a fundamental alcove rather than the stabilizer $K$ of the origin. One replaces $K$-orbits by $I$-orbits and considers the same question in the affine flag variety $G/I$ rather than the affine Grassmannian $G/K$. Geometrically this corresponds to replacing vertices by alcoves. An example of the patterns that arise from alcove-to-alcove shadows is shown in Figure~\ref{fig:YINshadow}. One has yet to find a closed formula describing the red alcoves in this picture.


\section{A second adventure: Affine Deligne-Lusztig Varieties}\label{sec:ADLV}

If one matches the little ship made of triangles at the top of the red pattern in the alcove shadow shown in Figure~\ref{fig:YINshadow} with one of the dark grey ships appearing close to the black alcove in the grey pattern of Figure~\ref{fig:GHKR} the red alcoves are a subset of the gray alcoves. 

These two pictures hint at a deep connection between folded galleries and the nonemptiness of affine Deligne-Lusztig varieties, for short ADLVs. And indeed, one can determine nonemptiness and compute dimensions of ADLVs in terms of positively folded galleries.

\begin{figure}[htb]
	\centering
		\includegraphics[trim=20 20 20 70 , clip, width=0.5\textwidth]{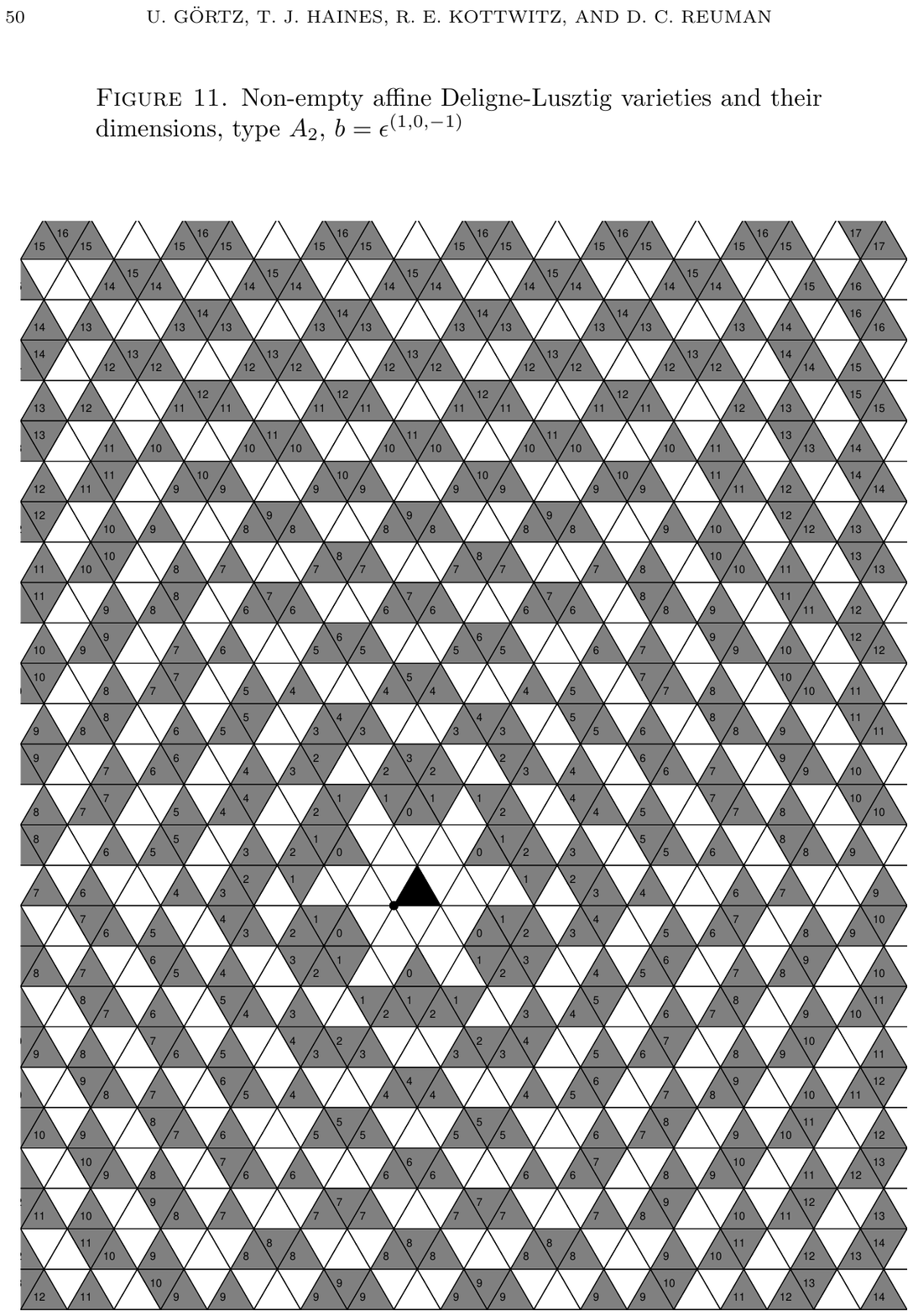}
\caption{Nonempty affine Deligne-Lusztig varieties $X_y(t^\rho)$ represented by the gray-shaded alcoves $y$. Source: \cite{GHKRarxiv}. }
\label{fig:GHKR}
\end{figure}

The formal setup is as follows. 

Let in the following $F= k((t))$ where $k = \overline \F_q$ is the algebraic closure of the finite field of order $q=p^m$ for some prime $p$. Denote by $\sigma$ the Frobenius map of $\F_q$. Then $F$ is a non-archimedian local field with the usual valuation.  The ring of integers will again be denoted by $\cO = k[[t]]$. Remember that there is a natural projection mapping  $\cO \to k$ by setting $t = 0$. This projection detects the constant term $a_0$ in a power series. 

The \emph{affine flag variety} associated with $G$ over $F$ is the quotient $G(F)/I$, where $G$ is a connected reductive group over $\F_q$. Denote again by $B\subset G$ a Borel subgroup containing a maximal torus $T$ and let $I$ denotes the \emph{Iwahori  subgroup} of $G(F)$. Algebraically $I$ is the inverse image of $B(k)$ under the projection $ G(\cO) \to G(k)$.
Geometrically the Iwahori subgroup is the stabilizer of the fundamental alcove in the base apartment $A_0$ in the affine Bruhat-Tits building associated with $G(F)$. 
Similar to the affine Grassmannian case the points of the affine flag variety can be interpreted in the Bruhat-Tits building $X$. As left-cosets of $I$ they correspond bijectively to the maximal simplices, i.e. alcoves in $X$. 
The group $G$ admits the following decomposition: 
\[G(F) = \bigsqcup_{x \in \aW} IxI \hspace{3ex}\text{  Iwahori-Bruhat decomposition.} \]

Here $W$ is the affine Weyl group associated with $G$. 
We are now ready to define affine Deligne-Lusztig varieties inside the affine flag variety. 

\begin{definition}[ADLVs]\label{def:ADLV}
	The \emph{affine Deligne--Lusztig variety}  $X_x(b) \subseteq G(F)/I$ is given by 
	\[ 
	X_x(b) = \{ g \in G(F) \mid g^{-1}b\sigma(g) \in IxI\}/I,  
	\]
	where $x \in \aW, b \in G(F)$. 
\end{definition}

Affine Deligne–Lusztig varieties can be thought of as generalizations of (classical) Deligne–Lusztig varieties to the affine setting. 
Rapoport introduced ADLVs in \cite{RapSatake} in the context of proving Mazur’s theorem. 
The classical Deligne--Lusztig varieties were constructed by Deligne and Lusztig \cite{DL,LuszChev} in order to study the representation theory of finite Chevalley groups. One of the parameters which indexes an
affine Deligne–Lusztig variety is an element of the affine analog of the Weyl group
of a reductive group over a finite field, hence the terminology. Note that ADLVs are in general not affine as varieties. 

One can simplify the situation a bit and also only consider $b\in\aW$.
The reason is that each $b$ in $G(F)$ is $\sigma$-conjugate to an element $b'$ in $\aW$. For $\sigma$-conjugate $b$ and $b'$ one has $X_x(b)\cong X_x(b')$. 
Hence ADLVs are indexed by a pair $x,b$ of elements in the affine Weyl group.   

For a long time the following two very fundamental questions had remained open: 

\begin{itemize}
	\item \textbf{Nonemptiness:} For which $(x,b)\in\aW\times\aW$ is $X_x(b)\neq\emptyset$ ?
	\item \textbf{Dimension:} What is the dimension of $X_x(b)$ ? 
\end{itemize}

The questions of nonemptiness and dimensions of ADLVs may be answered in terms of positively folded galleries and containment conditions of shadows. 
In case $x$ is \emph{basic}, i.e. $x$ admits a fixed point on $\Sigma$, solutions to these questions were known. Later Görtz, He and Nie \cite{GHN} explained the nonemptiness pattern for all $x$ and all basic $b$. Finally He \cite{HeAnnals} and He and Yu \cite{HeYu} gave dimension formulas for all $x$ and (eventually) all $b$. 
The results in \cite{MST} were the first confirming long open conjectures and proving nonemptiness and computing dimensions outside the basic case. 
I will now illustrate this approach which I have developed together with Elizabeth Mili\'cevi\'c and Anne Thomas in \cite{MST, MST2} and also \cite{MNST}. 

The starting point is a result by Görtz, Haines, Kottwitz and Reumann in \cite{GHKR} which characterizes the ADLVs in terms of double coset intersections similar to the ones we had studied in Section~\ref{sec:Kostant} but in the affine flag variety instead of the affine Grassmannian. For this we need to choose orientations on the base apartment $A_0$ in $X$.  

For an element $w\in \sW$ write $\phi_w$ for the Weyl chamber orientation on $A_0$ induced by the Weyl chamber based at $0$ in direction $w$. The corresponding chamber at infinity is fixed by the unipotent radical $^wU^-\define wU^{-}w^{-1}$ of the Borel subgroup $^wB^-\define wB^{-}w^{-1}$. 

\begin{thm}[{\cite[Thm 6.1]{GHKR}}]
	Let $x\in\aW$ and $\lambda$ be a special vertex in $A_0$. Then 
	\begin{enumerate}
		\item $X_x(t^\lambda)\neq\emptyset$ if and only if there exists $w\in\sW$ such that $^wU^-yI\cap IxI\neq\emptyset$.  
		\item if $X_x(t^\lambda)$ is nonempty, then 
		\[\dim(X_x(t^\lambda)=\max\{dim(^wU^-t^{w\lambda}I \cap IxI) -c(\lambda)\}),\]
		where $c$ is a constant depending on $\lambda$. 
	\end{enumerate}
\end{thm}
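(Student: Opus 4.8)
The plan is to convert the $\sigma$-twisted condition defining $X_x(t^\lambda)$ into an untwisted statement about double coset intersections, using an Iwasawa decomposition adapted to the chambers at infinity together with Lang's theorem. The geometric backbone is the family of retractions $\rho_{\infty}$ based at the chambers at infinity: for a direction $w \in \sW$ the fibers of the corresponding retraction are exactly the ${}^wU^-(F)$-orbits on the building, by the alcove analogue of Proposition~\ref{prop:retraction1}. These orbits are the strata through which I would read off the intersections.

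First I would record the structural facts that make the reduction possible: the Frobenius $\sigma$ stabilizes the base apartment $A_0$, hence the Iwahori $I$, the torus $T$ and each unipotent radical ${}^wU^-$, while $T$ normalizes every ${}^wU^-$ and $\sigma(t^\lambda)=t^\lambda$. Fixing $w$ and writing $g = u\,\dot w\, t^\nu\, i$ with $u \in {}^wU^-(F)$ and $i \in I$ via the Iwasawa decomposition, I would compute $g^{-1}t^\lambda\sigma(g)$. Commuting the translation past the unipotent factor (which merely rescales it) and absorbing the Frobenius collapses the expression to $i^{-1}\,\bigl(\text{an element of } {}^wU^-(F)\,t^{w\lambda}\bigr)\,\sigma(i)$; the conjugation by $\dot w$ is precisely what turns $t^\lambda$ into $t^{w\lambda}$. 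Since $i^{-1},\sigma(i)\in I$, the defining condition $g^{-1}t^\lambda\sigma(g)\in IxI$ descends modulo $I$ to ${}^wU^-(F)\,t^{w\lambda}\cap IxI \neq \emptyset$. Once one checks that the strata indexed by $w$ cover $G(F)/I$, this gives the ``easy'' direction of part (1).

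The converse — that a nonempty intersection genuinely produces a point of $X_x(t^\lambda)$ — is where Lang's theorem enters: over $k=\overline{\F}_q$ the twisted map $u\mapsto u^{-1}\sigma(u)$ is surjective on the relevant pro-unipotent group, so any witness of the intersection can be untwisted into an actual $g$ with $g^{-1}t^\lambda\sigma(g)\in IxI$. Assembling over all $w\in\sW$ yields the equivalence in part (1). For part (2) I would stratify $X_x(t^\lambda)$ by $w$ and study the reduction map on each stratum. The retraction fibers are affine spaces (the locally closed pieces $C(\delta)$ of Proposition~\ref{prop:retraction1}) and the Lang-trivialization contributes a fiber whose dimension is independent of the chosen coset, so each stratum fibers over ${}^wU^-(F)\,t^{w\lambda}\cap IxI$ with fibers of constant dimension. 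As $IxI$ is finite-dimensional while the ${}^wU^-$-orbit is semi-infinite, this intersection is finite-dimensional and is the only term that varies; subtracting the normalization constant $c(\lambda)$ and maximizing over $w$ then gives the stated formula.

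The main obstacle I expect is to make an essentially infinite-dimensional object yield a finite invariant, and it is twofold: first, pinning down the precise index so that $t^\lambda$ becomes $t^{w\lambda}$ and verifying that the Iwasawa strata cover $X_x(t^\lambda)$ compatibly with dimension and without overcounting; and second, proving rigorously that the fibers of the reduction map have a dimension that is genuinely constant and equal to $c(\lambda)$ across the whole intersection. The latter requires controlling the ${}^wU^-(F)$-orbit structure on the (infinite-dimensional) affine flag variety and extracting from it the finite-dimensional quantity governing $\dim X_x(t^\lambda)$ — exactly the step where the building-theoretic description of the retraction fibers as affine spaces carries the argument.
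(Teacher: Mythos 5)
First, a point of calibration: the survey does not prove this theorem at all --- it is quoted from G\"ortz--Haines--Kottwitz--Reuman \cite{GHKR}, so the only thing to compare your proposal against is their argument. Your outline is in fact the GHKR argument: decompose $G(F)/I$ into ${}^wU^-(F)$-orbits via the Iwasawa decomposition $G(F)=\bigsqcup_{v\in\aW}U^-(F)vI$ (equivalently, via the retraction fibers of Proposition~\ref{prop:retraction1}), compute $g^{-1}t^\lambda\sigma(g)$ for $g=uvi$, observe that conjugating the translation past $v$ produces $t^{w\lambda}$ where $w$ is the finite part of $v$, untwist with a Lang-type argument, and read off nonemptiness and dimension stratum by stratum. (Incidentally, the $y$ in part (1) of the survey's statement is a typo for $t^{w\lambda}$, as your computation correctly recovers.)

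That said, the two ``obstacles'' you flag at the end are not side issues --- they are the actual content of Section 6 of \cite{GHKR}, and your sketch does not resolve them. Concretely: (a) Lang's theorem does not apply to $U^-(F)$ as stated, because $U^-(F)$ is not a pro-unipotent group but an ind-group (a rising union of finite-dimensional unipotent groups); the map you need to be surjective is moreover the \emph{twisted} Lang map $u\mapsto (t^{-\lambda}u^{-1}t^{\lambda})\,\sigma(u)$, and GHKR's notion of \emph{admissible} subsets is introduced precisely to reduce this to honest finite-dimensional algebraic groups where Lang applies and where ``dimension'' of the intersection ${}^wU^-t^{w\lambda}I\cap IxI$ even makes sense. (b) The constant $c(\lambda)$ is not a normalization you can subtract for free: it is the output of the fiber-dimension computation (in \cite{GHKR} it appears as $\langle\rho,\lambda+w\lambda\rangle$, so a priori it depends on $w$ as well, and collapsing it to a function of $\lambda$ alone requires an argument), and infinitely many Iwasawa strata $U^-vI$ with the same finite part $w$ contribute, so one must also check that only finitely many of them meet $X_x(t^\lambda)$ and that the maximum is attained. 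So: right road map, with the genuinely hard steps correctly located but left open.
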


I have stated here the version of the result for $b$ being a translation. This is the case covered in \cite{MST}. In \cite[Theorem 11.3.1]{GHKRarxiv} a version for arbitrary $X_x(b)$ can be found which we also state as Theorem  2.8 in \cite{MST2}. That result involves intersections of double cosets with $I_P$ orbits on the left with $P$ being a standard (spherical) parabolic subgroup. 
Here $I_P=(I\cap M)N$ for $P=MN$ the Levi decomposition of the standard parabolic. 
These parabolics are closely related to chimney-retractions and shadows with respect to chimneys as studied in \cite{MNST}. 
See also Figure~\ref{fig:AlcoveShadowA2} where the gray strip is a chimney and the colored alcoves form the chimney shadow of the pink alcove labeled $x$. 

The next theorem relates these double cosets to the existence of positively folded galleries and containment of alcoves in shadows with respect to chimney orientations:  

\begin{thm}[\cite{MNST}]\label{thm:MNST}
	Let $x,y,z \in W$ and $P=MN$ a spherical standard parabolic. Let $I_P=(I\cap M)N$ which is a subset of the $P$-chimney stabilizer. Then the following are equivalent
	\begin{enumerate}
		\item $(I_P)^yzI \cap IxI \neq \emptyset $
		\item There exists a $P^y$-positively folded gallery of type $x$ that starts in $\id$ and ends in $z$, that is $x\in \Shadow_{P^y}(z)$. 
	\end{enumerate}
	Moreover the set $\Shadow_{P^y}(z)$ can be computed recursively. 
\end{thm}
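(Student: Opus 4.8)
The plan is to convert each side of the equivalence into a statement about the Bruhat--Tits building $X$ and two retractions onto its base apartment $A_0$, and then to run the dictionary between retracted galleries and positively folded galleries that already underlies Propositions~\ref{prop:retraction1} and~\ref{prop:shadowsretractions}. Identify an alcove of $X$ with a left coset $gI$, so that $I=\Stab(\fa)$ for the fundamental alcove $c_0=\fa$. Two facts organize everything. First, the retraction $r=r_{A_0,c_0}$ based at $c_0$ preserves the $W$--valued distance $\delta(c_0,\,\cdot\,)$ from $c_0$, and $IxI/I$ is exactly the set of alcoves with $\delta(c_0,d)=x$; hence $r(d)=x.c_0$ if and only if $d\in IxI/I$. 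Second, the chimney retraction $\rho=\rho_{P^y}$ based at the chimney $P^y$ has fibres equal to the $(I_P)^y$--orbits, the direct chimney analogue of the statement in Proposition~\ref{prop:retraction1} that the fibres of $\rho_\infty$ are the $U(F)$--orbits; hence $\rho(d)=z.c_0$ if and only if $d\in (I_P)^y z I/I$. Putting these together, condition~(1) is equivalent to the existence of a single alcove $d$ in $X$ with $r(d)=x.c_0$ and $\rho(d)=z.c_0$, and the whole theorem becomes a statement about such alcoves.

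For $(1)\Rightarrow(2)$ I would begin from such a $d$. Since $\delta(c_0,d)=x$ there is a minimal gallery $\gamma$ in $X$ from $c_0$ to $d$ whose type is a reduced word for $x$. Because $\rho$ fixes $A_0$ pointwise and preserves types of galleries, $\rho(\gamma)$ is a gallery in $A_0$ of type $x$ starting at $c_0=\id$ and ending at $\rho(d)=z.c_0$. The remaining point is the one shared by all the retraction arguments above: images of galleries under the chimney retraction are positively folded with respect to the chimney orientation $\phi_{P^y}$. Granting this, $\rho(\gamma)$ is a $P^y$--positively folded gallery of type $x$ from $\id$ to $z$, which is precisely~(2).

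The converse $(2)\Rightarrow(1)$ is the substantial direction, and it rests on an unfolding statement: every $P^y$--positively folded gallery $\eta$ of type $x$ from $\id$ to $z$ in $A_0$ admits a minimal lift, that is an unfolded gallery $\tilde\eta$ in $X$ with $\rho(\tilde\eta)=\eta$. This is the chimney version of the unfolding used in the proof of Proposition~\ref{prop:shadowsretractions} (Proposition~3.3 of \cite{Convexity}), and it is exactly where the hypothesis that $I_P$ sits inside the chimney stabilizer is needed. I would remove the positive folds of $\eta$ one at a time: at each positive fold the thickness of $X$ (at least three alcoves per panel) provides a fresh apartment into which the tail of the gallery can be pushed, and the \emph{positivity} of the fold with respect to $\phi_{P^y}$ is what guarantees that this new apartment still retracts onto $A_0$ along the prescribed folded gallery, so that no previously resolved fold is disturbed. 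The end alcove $d$ of the resulting minimal lift then satisfies $\delta(c_0,d)=x$ and $\rho(d)=z.c_0$, so $d\in IxI/I$ and $d\in (I_P)^y z I/I$, and any representative $g$ with $gI=d$ lies in the intersection of~(1). I expect the bookkeeping of this unfolding to be the main obstacle: one has to check that the mixed geometry of the chimney --- Weyl-chamber-like in the Levi directions of $M$ and alcove-like in the directions of $N$ --- is compatible with resolving folds one at a time, and this compatibility is more delicate than for the purely Weyl-chamber orientations of Proposition~\ref{prop:shadowsretractions}.

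Finally, for the recursive computation of $\Shadow_{P^y}(z)$ I would follow the pattern behind Theorem~\ref{thm:regular_shadow}, inducting on $\ell(z)$. Peeling off a generator $s$ from a reduced word for $z$ relates positively folded galleries of type $z$ to those of type $zs$ (or $sz$) through their last (or first) step, which either crosses the wall in question or folds there; whether such a fold is positive is decided by evaluating $\phi_{P^y}$, and, exactly as in part~(ii) of Theorem~\ref{thm:regular_shadow}, the left recursion may require replacing $P^y$ by its $s$--conjugate. This is legitimate because chimney orientations share the structural features --- braid invariance and predictable behaviour under the $W$--action --- that made the recursion in Theorem~\ref{thm:regular_shadow} possible, so the recursion is well defined and, iterated down to the empty word, computes $\Shadow_{P^y}(z)$.
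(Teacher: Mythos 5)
This theorem is quoted in the survey from \cite{MNST} without proof, so there is no in-paper argument to compare against; judged on its own terms, your outline follows the correct general philosophy (the retraction dictionary underlying Propositions~\ref{prop:retraction1}, \ref{prop:retractions2} and \ref{prop:shadowsretractions}) but differs in one substantive way from how this equivalence is actually established in \cite{MNST} and its predecessor \cite{PRS}. There, the equivalence and the recursion are proved \emph{simultaneously} by induction on the length of the type: one writes $IxI=Is_{i_1}I\cdots Is_{i_n}I$ for a reduced word and computes, one letter at a time, how a coset $(I_P)^y vI\cdot IsI$ decomposes into one or two $(I_P)^y$-double cosets according to the value of the chimney orientation on the relevant panel. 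This one-step algebraic recursion is what makes the ``moreover'' clause fall out for free and avoids the global unfolding bookkeeping that you correctly identify as the delicate point of your route. Your proposed $(2)\Rightarrow(1)$ via unfolding an entire positively folded gallery fold-by-fold in a thick building is the strategy of \cite[Prop.~3.3]{Convexity} for Weyl chamber orientations; it can be adapted to chimneys, but you would then be reproving a chimney analogue of that unfolding lemma rather than citing it, and the recursion would still need a separate argument.

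Two ingredients in your sketch are asserted where they are genuine lemmas requiring proof. First, the identification of the fibres of the chimney retraction $\rho_{P^y}$ with the $(I_P)^y$-orbits is not automatic: $I_P=(I\cap M)N$ is only a \emph{subset} of the chimney stabilizer (as the theorem statement itself emphasizes), so one must show that the $I_P$-orbits already exhaust the fibres; this is an Iwasawa-type decomposition and is one of the main results of \cite{MNST}, not a formal consequence of the building axioms. Second, the claim that positivity of a fold with respect to the chimney orientation guarantees a compatible unfolding into a fresh apartment is exactly the content that distinguishes chimney orientations (mixed alcove-like and Weyl-chamber-like behaviour in the $M$ and $N$ directions) from the orientations at infinity, and it is the heart of the matter rather than a routine check. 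Your recursion sketch is fine in spirit, though note the survey's own notation is inconsistent here (a gallery of type $x$ ending in $z$ should give $z\in\Shadow_{P^y}(x)$ under Definition~\ref{def:braid:invariant}); you should fix a convention before setting up the induction on the length of the type.
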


An analogous statement holds for intersections of the form $(I_P)^ybK \cap KxK$ and for the analogous double coset intersections with $K$ replaced by some $K_\sigma$ for $\sigma$ any face of the fundamental alcove.  
Putting $I_P=U^-$ in Theorem \ref{thm:MNST} the statement was already shown by Parkinson, Ram and C.Schwer \cite{PRS} as well as by Gaussent and Littelmann in \cite{GaussentLittelmann}.

The key observation here is, that positively folded galleries index non-empty affine Deligne-Lusztig varieties and that their combinatorially defined dimension allows to compute the dimension of the ADLVs. 
In \cite{MST} and \cite{MST2} we construct and manipulate positively folded galleries using delicate combinatorial methods on shadows, folded galleries and generalized root operators to determine which ADLVs are nonempty.

\begin{figure}[ht]
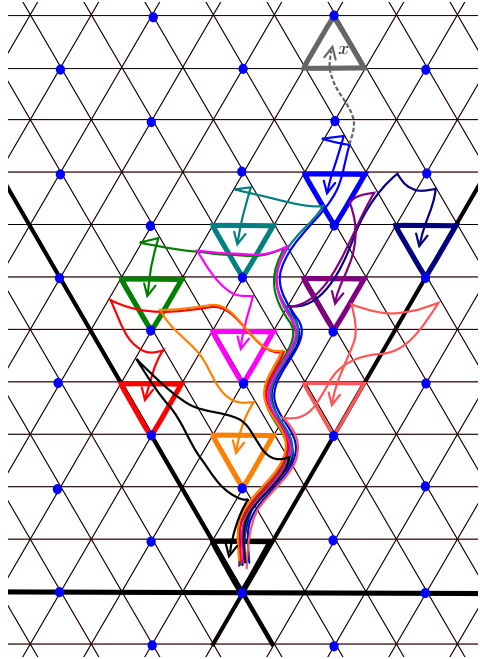

	\begin{center}
		\resizebox{0.5\textwidth}{!}
		{
			\begin{overpic}{Xa1toXxb150401}
				\put(50,92){$x$}
			\end{overpic}
		}
	\end{center}
	\caption{$X_x(b) \neq \emptyset$ for most dominant $b=t^\mu$ between 1 and $x$. The nonempty $b$ of that form are outlined with bold color in this figure. This picture is taken from \cite{MST}.  
	}
	\label{fig:galleryADLV}. 
\end{figure}

\begin{example}
	In Figure~\ref{fig:galleryADLV} a family of such positively folded galleries is shown. Each colored gallery corresponds to an alcove $y=t^\lambda\fa$ for some $\lambda$ representing a point in the affine flag variety and simultaneously one inside $X_x(y)$ establishing nonemptiness for this particular variety. The folded galleries are constructed in a very specific way (see Section 6 of \cite{MST}) such that their dimensions  determine the dimension of the corresponding variety. 
	All the folded galleries in the figure are of the same type as the gray minimal connecting the fundamental alcove with $x$. One of the colored galleries is obtained from any other by a finite sequence of applications of root operators. The topmost gallery (blue) is explicitly constructed to satisfy the neccessary folding and dimension criteria. 
\end{example}

Our approach in \cite{MST, MST2} can be summarized as follows: 

\begin{itemize}
		\item[(1)] An ADLV $X_x(b)$ is nonempty if and only if there exists a positively folded gallery from $\id$ to (a $\sigma$-conjugate of) $b$ of type $w$ with $[w]=x$. 
		\item[(2)] The dimension $dim(X_x(b))$ can be computed via the dimension of these galleries. 		
		\item[(3)] Construct and manipulate such galleries using root operators, combinatorics in Coxeter complexes and explicit transformations.  
\end{itemize}

We close this section with an example of such an explicit  construction  and one example of a theorem we are able to prove using this approach.
Figure~\ref{fig:galleryconstruction} shows how to construct a gallery from the fundamental alcove to itself of type $a = t^{2\rho}w_0$. Its existence proves nonemptiness of the ADLV $X_a(\id)$. 
	
\begin{figure}[htb]
		\begin{center}
			\begin{overpic}[trim=0 0 0 5, clip,  width=0.63\textwidth]{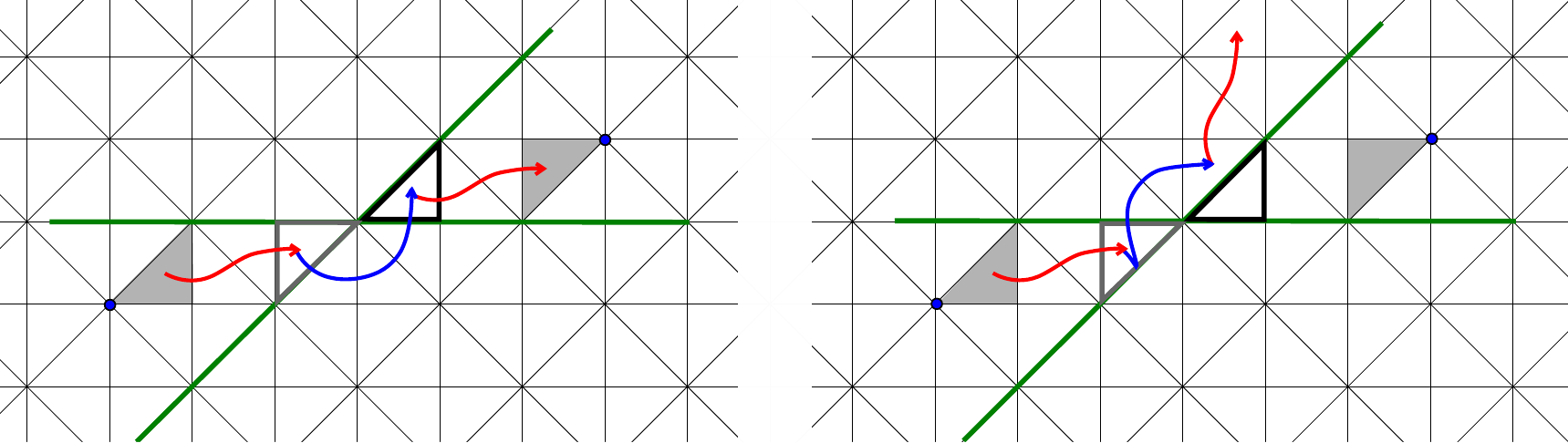} 
				\put(34,18){$a$}
				\put(8,10){$1$}
			\end{overpic}\hspace{2ex}
			\includegraphics[trim=0 5 0 3, clip,  width=0.32\textwidth]{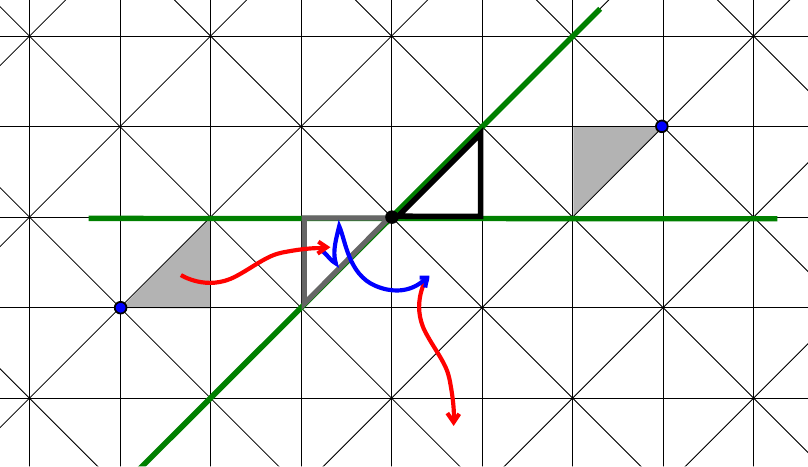}\newline
			
			\includegraphics[width=0.32\textwidth]{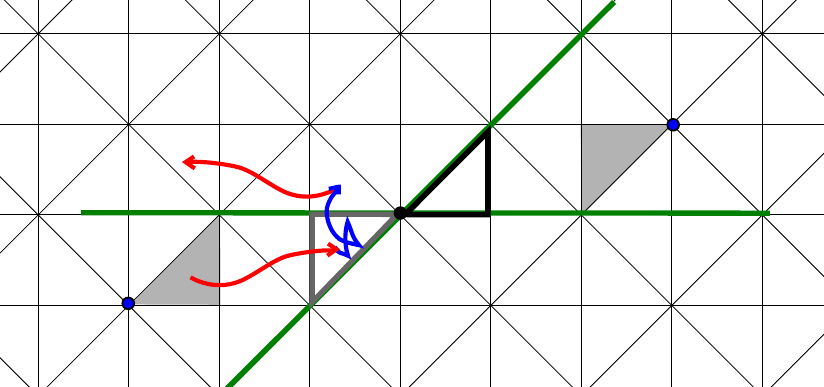} \hspace{2ex}
			\includegraphics[width=0.32\textwidth]{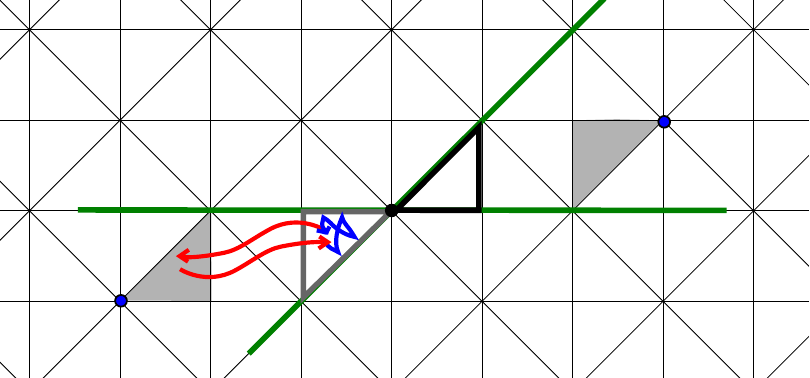} \hspace{2ex}
			
		\end{center}
	\caption{This construction proves that  $X_a(1) \neq \emptyset$ and of dimension $\geq 7$.}
	\label{fig:galleryconstruction}
\end{figure}

\begin{thm}[\cite{MST}]\label{thm:MST}
		Let $b = t^\mu$ be a pure translation and let $x = t^\lambda w \in W$.
		Assume that  $b$ is in the convex hull of $x$ and the base alcove \newline
		+ two technical conditions on $\mu$ and $\lambda$. 
		Then
		\[ X_x(1) \neq \emptyset \implies X_x(b) \neq \emptyset \]
		and if $w = w_0$ then $X_x(1) \neq \emptyset$ and $X_x(b) \neq \emptyset$.
		\smallskip
		
		If both varieties are nonempty then
		\[ \dim X_x(b) = \dim X_x(1) - \langle \rho, \mu^+ \rangle.\]  
\end{thm}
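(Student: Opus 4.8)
The plan is to translate both assertions into the language of positively folded galleries and then transport galleries by the Littelmann root operators, controlling endpoint and dimension simultaneously. By the theorem of Görtz--Haines--Kottwitz--Reumann \cite{GHKR} together with Theorem~\ref{thm:MNST} (in the form summarised as point~(1) above), nonemptiness of $X_x(t^\nu)$ for a translation is equivalent to the existence of a positively folded gallery whose type is a reduced word for $x$, starting in $\fa$ and ending at the alcove $t^\nu\fa$, with positivity taken with respect to a suitable Weyl chamber orientation (Definition~\ref{def:Weyl chamber orientation}). Under this dictionary the implication $X_x(1)\neq\emptyset\implies X_x(b)\neq\emptyset$ becomes the combinatorial assertion: from one positively folded gallery of this type ending at $\fa$, produce one of the same type ending at $t^\mu\fa$.

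For the nonemptiness step I would start from a positively folded gallery $\gamma_0$ witnessing $X_x(1)\neq\emptyset$ and apply root operators to push its endpoint outward from the identity region towards $t^\mu\fa$. The hypothesis that $b=t^\mu$ lies in the convex hull of $\fa$ and $x$ is exactly what guarantees that every intermediate endpoint stays inside that convex hull, so that at each step a suitable operator $e_\alpha$ or $f_\alpha$ remains defined and positivity of the folds is preserved; the two technical conditions on $\mu$ and $\lambda$ ensure that the type is unchanged and that the endpoint lands precisely on $t^\mu\fa$. For the stronger statement in the case $w=w_0$, I would moreover produce the input gallery by hand: the longest-element case is precisely the one in which a positively folded gallery from $\fa$ to $\fa$ of the correct type can be assembled by successive folds, as illustrated in Figure~\ref{fig:galleryconstruction}, so that $X_x(1)\neq\emptyset$ needs no external hypothesis there.

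For the dimension formula I would invoke point~(2): $\dim X_x(t^\nu)$ equals the maximal dimension (in the load-bearing sense of Definition~\ref{def:dimension}) of a positively folded gallery of the given type ending at $t^\nu\fa$, up to the normalising constant of the Görtz--Haines--Kottwitz--Reumann formula. Because an application of $e_\alpha$ raises, and of $f_\alpha$ lowers, the gallery dimension by exactly one, the net change of dimension along the chain of root operators carrying a maximal-dimension gallery from endpoint $\fa$ to endpoint $t^\mu\fa$ equals the signed number of operators used; I would then identify this signed count with $\langle\rho,\mu^+\rangle$, since moving the endpoint along the dominant direction $\mu^+$ forces exactly $\langle\rho,\mu^+\rangle$ additional positive folds, which is the geometric content of this pairing.

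The hard part will be the nonemptiness construction, not the dimension bookkeeping. One must verify that the root operators can be iterated all the way out to $t^\mu$ without ever becoming undefined and without destroying positivity, and it is precisely here that the convexity hypothesis and the two technical conditions on $\mu$ and $\lambda$ are genuinely used; in the general direction $w\neq w_0$ the assumption $X_x(1)\neq\emptyset$ is indispensable, as it supplies the base gallery that no explicit construction provides. Matching the dimension drop to $\langle\rho,\mu^+\rangle$ exactly, rather than up to a bounded error, is the delicate point of the second step and relies on the maximal-dimension galleries being connected by root operators as recorded earlier.
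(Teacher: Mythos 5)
Your proposal follows essentially the same route as the one summarized in this survey for the theorem from \cite{MST}: translate nonemptiness and dimension of $X_x(b)$ into the existence and dimension of positively folded galleries via the G\"ortz--Haines--Kottwitz--Reumann result and the gallery dictionary, construct the base gallery explicitly in the $w=w_0$ case (as in Figure~\ref{fig:galleryconstruction}), and transport it to endpoint $t^\mu\fa$ by root operators while tracking the dimension change of $\pm 1$ per operator. The only point to watch is the sign bookkeeping in the last step: the $-\langle\rho,\mu^+\rangle$ comes from the combined effect of the change in gallery dimension and the $\mu$-dependent normalizing constant in the G\"ortz--Haines--Kottwitz--Reumann dimension formula, not from the fold count alone.
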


Theorem~\ref{thm:MST} relates nonemptiness and the dimension count of $X_x(b)$ for an arbitrary translations $b=t^\mu$ to nonemptiness, respectively the dimension of $X_x(\id)$. See \cite{MST, MST2} for many other results in that direction.


\section{It's a wild world}\label{sec:final}

I hope it has become clear that folded galleries and shadows are interesting for at least two reasons. For one they give rise to rich combinatorial structures in Coxeter groups which are not fully understood. On the other hand they provide a powerful tool to study varieties associated with both the affine Grassmannian and the affine flag variety. 

\subsection{More applications}\label{sec:applications} 

There are many more applications than the ones highlighted in this work. One direction that has been active is the study of structure constants and symmetric functions in the context of Hecke algebras. I would like to restrict myself to very few, non-exhaustive comments here. There are probably many references I have missed. 

An equivalent formulation of the gallery model was given by Ram \cite{Ram} using alcove walks.  Ram also shows in \cite{Ram} that the affine Hecke algebra can be modeled as a quotient of the algebra of alcove walks with respect to a small list of relations. The product of elements in the alcove walk algebra is simply the concatenation of alcove walks. 

The connection between symmetric functions, crystal theory, Hecke algebras and the path model and its formulation in terms of alcove walks is explained in detail in \cite{Ram}. 
The results highlighted in \cite{Ram} show that the theory of folded galleries is a powerful tool to understand structure constant and $q$-analogs of crystal structures. 

Using the combinatorics of folded galleries, or alcove walks, one can explain base change formulas in the affine Hecke algebra \cite{GriffethRam}.  
C. Schwer used folded galleries to compute Hall--Littlewood polynomials in \cite{CSchwer}. 
Ram and Yip study McDonald Polynomials using alcove walks in  \cite{RamYip}.

Let me highlight one other direction of study, namely a gallery interpretation of Mirkovi\'c -Vilonen cycles, MV-cycles for short, and MV-polytopes. This connection was already established in \cite{GaussentLittelmann}, where the authors provide a representation theoretic interpretation of the combinatorial character formula stated as Theorem~\ref{thm:character} above. 

Denote by $X_\lambda^\mu$ the closure of $U^-(F).\mu\cap K.\lambda$.  
This set may be expressed as a union of endpoints of positively folded galleries in the projection of a cell $Z(\delta)$ corresponding to an LS-gallery $\delta$ of type $\lambda$ with endpoint $\mu$. See page 38 in \cite{GaussentLittelmann}  for a precise statement. 

\begin{thm}[{\cite[Thm C]{GaussentLittelmann}}]
The cells $Z(\delta)$ indexed by the LS galleries of type $\lambda$ and endpoint $\mu$ are exactly the irreducible components of $X_\lambda^\mu$.  These irreducible components are exactly the MV cycles. 
\end{thm}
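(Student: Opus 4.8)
The plan is to build the cells $Z(\delta)$ out of the gallery stratification coming from the retraction $\rho_\infty$, to identify the top-dimensional strata with LS-galleries, and then to match their closures against the standard description of MV-cycles. First I would stratify the orbit $K.\lambda$ using Proposition~\ref{prop:retractions2}: its points are exactly the end-vertices of minimal galleries of type $\tau=\type(\gamma_\lambda)$ starting at the origin $\lambda_0$. Applying the retraction $\rho_\infty$ centered at the antidominant chamber at infinity sends each such minimal gallery to a positively folded gallery of the same type in $A_0$, and by Proposition~\ref{prop:retraction1} the fiber $C(\delta)=\hat\rho_{\gamma_\lambda}^{-1}(\delta)$ over a fixed positively folded gallery $\delta$ is a locally closed affine space whose dimension equals the gallery dimension of $\delta$. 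Since the fibers of $\rho_\infty$ are exactly the orbits of the unipotent radical fixing the antidominant chamber at infinity, which is the group $U^-(F)$ appearing in the statement, the intersection $U^-(F).\mu\cap K.\lambda$ decomposes as the union of those cells $C(\delta)$ whose image gallery $\delta$ has end-vertex $\mu$, giving a cell decomposition of $X_\lambda^\mu$ indexed by positively folded galleries of type $\tau$ ending at $\mu$.

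The second step is the dimension count. By \cite[Prop.~3]{GaussentLittelmann} the gallery dimension among galleries of type $\tau$ with fixed end-vertex $\mu$ is bounded above, and by Definition~\ref{def:LSgalleries} the galleries attaining this bound are precisely the LS-galleries. Translating the gallery dimension into the dimension of $C(\delta)$ inside the affine Grassmannian, the LS-galleries give the top-dimensional cells, while every non-LS cell has strictly smaller dimension and therefore lies in the closure of the union of LS-cells or in a lower-dimensional stratum. Taking closures, I would conclude that the irreducible components of $X_\lambda^\mu$ are exactly the $Z(\delta)\define\overline{C(\delta)}$ for $\delta$ an LS-gallery of type $\lambda$ with end-vertex $\mu$: each $C(\delta)$ is an affine space, hence irreducible with irreducible closure, and distinct LS-galleries yield distinct components since they retract to distinct galleries in $A_0$.

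To finish I would identify these components with MV-cycles. Recall that the MV-cycles are by definition the irreducible components of the closed intersection of the semi-infinite cell $S_\mu=U^-(F).\mu$ with the orbit closure $\overline{G(\cO).\lambda}$ in the affine Grassmannian. Under the identification of points of the affine Grassmannian with vertices of the building $X$, the orbit $K.\lambda$ is $G(\cO).\lambda$ and the $U^-(F)$-orbit of $\mu$ is $S_\mu$, so $X_\lambda^\mu$ is precisely the closure defining the MV-cycles and the two collections of irreducible components agree. As a consistency check, the character formula of Theorem~\ref{thm:character} shows that the number of LS-galleries of type $\lambda$ ending at $\mu$ equals the weight multiplicity $\dim V(\lambda)_\mu$, which is exactly the number of MV-cycles predicted by the geometric Satake correspondence.

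The hard part will be the passage from the combinatorial gallery dimension to the honest dimension of the cell $C(\delta)$ inside the affine Grassmannian, together with the equidimensionality of $X_\lambda^\mu$ needed to guarantee that the LS-cells account for \emph{every} irreducible component and not merely the top-dimensional ones. The latter rests on the fact that these opposite-orbit intersections are equidimensional of the expected dimension, so that no lower-dimensional cell can produce a component missed by the LS-galleries; establishing this cleanly is the technical core of the argument.
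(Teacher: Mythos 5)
The survey does not actually prove this statement: it is quoted verbatim from Gaussent--Littelmann (their Theorem~C), with the surrounding text only sketching the setup ($X_\lambda^\mu$ as the closure of $U^-(F).\mu\cap K.\lambda$ and the cells $Z(\delta)$). So there is no in-paper proof to match against; what you have written is a reconstruction of the original Gaussent--Littelmann argument, and it is faithful to that strategy. Your first two steps --- stratifying $K.\lambda$ by gallery types via Proposition~\ref{prop:retractions2}, using Proposition~\ref{prop:retraction1} to see each fiber $C(\delta)=\hat\rho_{\gamma_\lambda}^{-1}(\delta)$ as a locally closed affine space whose dimension is the combinatorial gallery dimension, and then isolating the top-dimensional cells as the LS-galleries --- are exactly the mechanism behind the cited theorem, and the identification of $X_\lambda^\mu$ with the intersection defining MV-cycles is correct since $K.\lambda=G(\cO).\lambda$ and the $U^-(F)$-orbits are the fibers of $\rho_\infty$.

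The one place where your write-up is not yet a proof is the step you yourself flag: a cell of strictly smaller dimension need not lie in the closure of the LS-cells, so without further input it could contribute an irreducible component of smaller dimension that is not of the form $Z(\delta)$ for $\delta$ an LS-gallery. Your sentence ``therefore lies in the closure of the union of LS-cells or in a lower-dimensional stratum'' does not resolve this. There are two standard ways to close the gap, and you mention both without committing to either: (i) invoke the equidimensionality of $S_\mu\cap\overline{Gr^\lambda}$ (a theorem of Mirkovi\'c--Vilonen, external to the gallery combinatorics), which rules out lower-dimensional components outright; or (ii) upgrade your ``consistency check'' to the actual argument --- the number of irreducible components of $X_\lambda^\mu$ equals $\dim V(\lambda)_\mu$ by Mirkovi\'c--Vilonen, the number of LS-galleries of type $\lambda$ ending at $\mu$ equals $\dim V(\lambda)_\mu$ by the character formula of Theorem~\ref{thm:character}, and each LS-gallery already yields a distinct component (a top-dimensional cell closure cannot sit inside another cell closure of the same dimension from which it is disjoint), so the count forces these to be all of them. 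Either route is legitimate, but one of them must be made load-bearing rather than left as ``the technical core'' to be established later.
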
	

Mirkovi\'c--Vilonen cycles are a family of subvarieties of the affine Grassmannian with the following property: Denote by $Gr^\lambda$ the subvariety whose  intersection homology is isomorphic to the irreducible representation $V(\lambda)$ of $G^\vee$ of highest weight $\lambda$. The subset of the MV cycles lying in $Gr^\lambda$ forms a basis for the intersection homology. This implies that there is also a basis of $V(\lambda)$ indexed by MV cycles. 

MV-polytopes are images of MV-cycles $C_{\nu, \lambda^+}$ under the moment map with respect to the $T$ action on the affine Grassmannian.  
The $C_{\nu, \lambda^+}$ are closed $T$-invariant subvarieties of $G/K$. 
	
Ehrig \cite{Ehrig} studies MV-polytopes by means of Bruhat-Tits buildings and gives a type-independent characterization of MV-polytopes by assigning to every LS-gallery an explicitly constructed MV-polytope.

Suppose a vertex $\nu$ is given and let $\gamma$ be an LS-gallery of type $\lambda$ with end-vertex $\nu$, then the following theorem holds. 

\begin{thm}[\cite{Ehrig}]
The MV polytope associated with $\nu, \lambda$ is the convex hull of certain other LS-galleries of type $\lambda$.
\end{thm}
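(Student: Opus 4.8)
The plan is to realise the MV-polytope as the image of the associated MV-cycle under the moment map of the torus action and then to read off its vertices through the building retractions introduced in Section~\ref{sec:Littelmann}. Write $Z(\gamma)$ for the MV-cycle attached to $\gamma$; by the Gaussent--Littelmann realisation of MV-cycles as the gallery cells $Z(\delta)$ (Theorem~C in \cite{GaussentLittelmann}) together with Theorem~\ref{thm:character}, it is an irreducible, $T$-invariant projective subvariety of $G/K$. The $T$-fixed points of $G/K$ are exactly the vertices $t^\mu K$ of the base apartment $A_0$, i.e.\ the coweights $\mu$, and the moment map $\Phi$ for the $T$-action sends $t^\mu K$ to $\mu$. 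Since the moment image of a projective $T$-variety is the convex hull of the images of its $T$-fixed points, I obtain
\[
\Phi\bigl(Z(\gamma)\bigr)=\conv\{\mu : t^\mu K \in Z(\gamma)\}.
\]
The problem is thus reduced to identifying those coweights with end-vertices of LS-galleries of type $\lambda$ and to isolating the extreme ones.

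First I would produce the candidate vertices one Weyl chamber direction at a time. For a fixed $w\in\sW$ consider the retraction $\rho_w\colon X\to A_0$ based at the chamber at infinity in direction $w$ (Definition~\ref{def:infinityretraction}); conjugating Proposition~\ref{prop:retraction1} by $w$ shows that its fibres are the ${}^wU^-$-orbits. Consequently $\rho_w\bigl(Z(\gamma)\bigr)$ is a set of coweights, namely the end-vertices of the positively folded galleries that arise as the $\rho_w$-images of the minimal galleries sweeping out $Z(\gamma)$. Among these the coweight $\mu_w$ that is extremal in the direction $w$ is the end-vertex of a $\phi_w$-positively folded gallery $\gamma_w$ of maximal dimension, hence of an LS-gallery of type $\lambda$ in the sense of Definition~\ref{def:LSgalleries}. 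These galleries $\gamma_w$, as $w$ ranges over $\sW$, are the ``certain other LS-galleries'' of the statement, and the point $t^{\mu_w}K$ is precisely the $\Phi$-extremal $T$-fixed point of $Z(\gamma)$ in direction $w$, because $\rho_w$ flattens the cycle away from that direction.

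It then remains to assemble the polytope. Each $\mu_w$ is a $T$-fixed point of $Z(\gamma)$, so $\conv\{\mu_w : w\in\sW\}\subseteq\Phi(Z(\gamma))$, while the support function of $\Phi(Z(\gamma))$ in each direction is attained at the corresponding $\mu_w$ by construction, giving the reverse inclusion and hence equality. Unwinding the identification between $\gamma_w$ and its end-vertex then yields the desired description of the MV-polytope as the convex hull of the family $\{\gamma_w\}$.

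The step I expect to be the main obstacle is the bridge in the middle paragraph: matching the \emph{analytic} extremal direction of the moment map with the \emph{combinatorial} extremal direction of the building retraction, and verifying that $\rho_w$ indeed returns an honest LS-gallery whose end-vertex is a \emph{vertex} of the polytope rather than an interior lattice point. Making this precise amounts to checking that the $\sW$-many end-vertices $\mu_w$ satisfy the Berenstein--Zelevinsky tropical Plücker relations that characterise pseudo-Weyl polytopes, translated into the language of load-bearing hyperplanes and gallery dimension (Definition~\ref{def:dimension}); equivalently, one must reconcile the retraction construction with the crystal-theoretic recipe that produces each $\mu_w$ from $\gamma$ by a reduced-word string of root operators. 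Establishing this compatibility, rather than the convexity input, which is formal, is the technical heart of the argument.
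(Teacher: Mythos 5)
A preliminary remark: the survey does not prove this statement at all --- it is quoted from \cite{Ehrig}, and the only proof-relevant comment in the text is the sentence after the theorem, that the MV polytope also equals the convex hull of all partial unfoldings of $\gamma$. So there is no in-paper argument to compare you against. What I can say is that your outline --- moment map, convexity over the $T$-fixed points, and extraction of the extremal fixed point in each Weyl direction $w$ via the retraction $\rho_w$ --- is precisely the GGMS strategy (Anderson, Kamnitzer \cite{KamnitzerAnnals}) that Ehrig transports into the building; you have reconstructed the correct skeleton.

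The difficulty is that the skeleton is nearly all you have written, and the load-bearing steps are asserted rather than established. First, the claim that the $\Phi$-extremal $T$-fixed point of $Z(\gamma)$ in direction $w$ is the end-vertex of a $\phi_w$-positively folded gallery of maximal dimension, obtained by retracting a minimal lift of $\gamma$ from the chamber at infinity in direction $w$, is essentially the main theorem of \cite{Ehrig}, not a quotable lemma: it requires comparing the retractions from different chambers at infinity on one and the same cell $C(\delta)$ of Proposition~\ref{prop:retraction1} and tracking how gallery dimension transforms, and this is where all the work sits. You flag this yourself, correctly, but it means the middle paragraph is the theorem rather than a step of its proof. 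Second, your closing inclusion $\Phi\bigl(Z(\gamma)\bigr)\subseteq\conv\{\mu_w\}$ does not follow from checking the support function only in the $\lvert\sW\rvert$ directions indexed by $w$: a priori the normal fan of the moment image need not be refined by the Weyl fan, and the fact that it is (the pseudo-Weyl property) is again part of what must be proved. The standard repair is the containment $Z(\gamma)\subseteq\overline{{}^{w}U^{-}.t^{\mu_w}K}$ for every $w\in\sW$, whose moment image is $\mu_w$ translated by a Weyl cone; intersecting over $w$ and using irreducibility yields equality with $\conv\{\mu_w\}$. Third, the statement (and the follow-up sentence on partial unfoldings) takes the convex hull of the galleries $\gamma_w$ themselves, not merely of their end-vertices, so you still owe the check that each retracted gallery lies inside $\conv\{\mu_{w'}\}_{w'\in\sW}$. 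In short: right route, genuine gaps at exactly the points where Ehrig's paper does its work.
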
	

One can in fact prove that the MV polytope is also the same as the convex hull of all (partial) unfoldings of $\gamma$. 
An example of such a polytope is given in Figure~\ref{fig:MVcycle}.  

\begin{figure}[htb]
	\centering
	\includegraphics[width=0.5\textwidth]{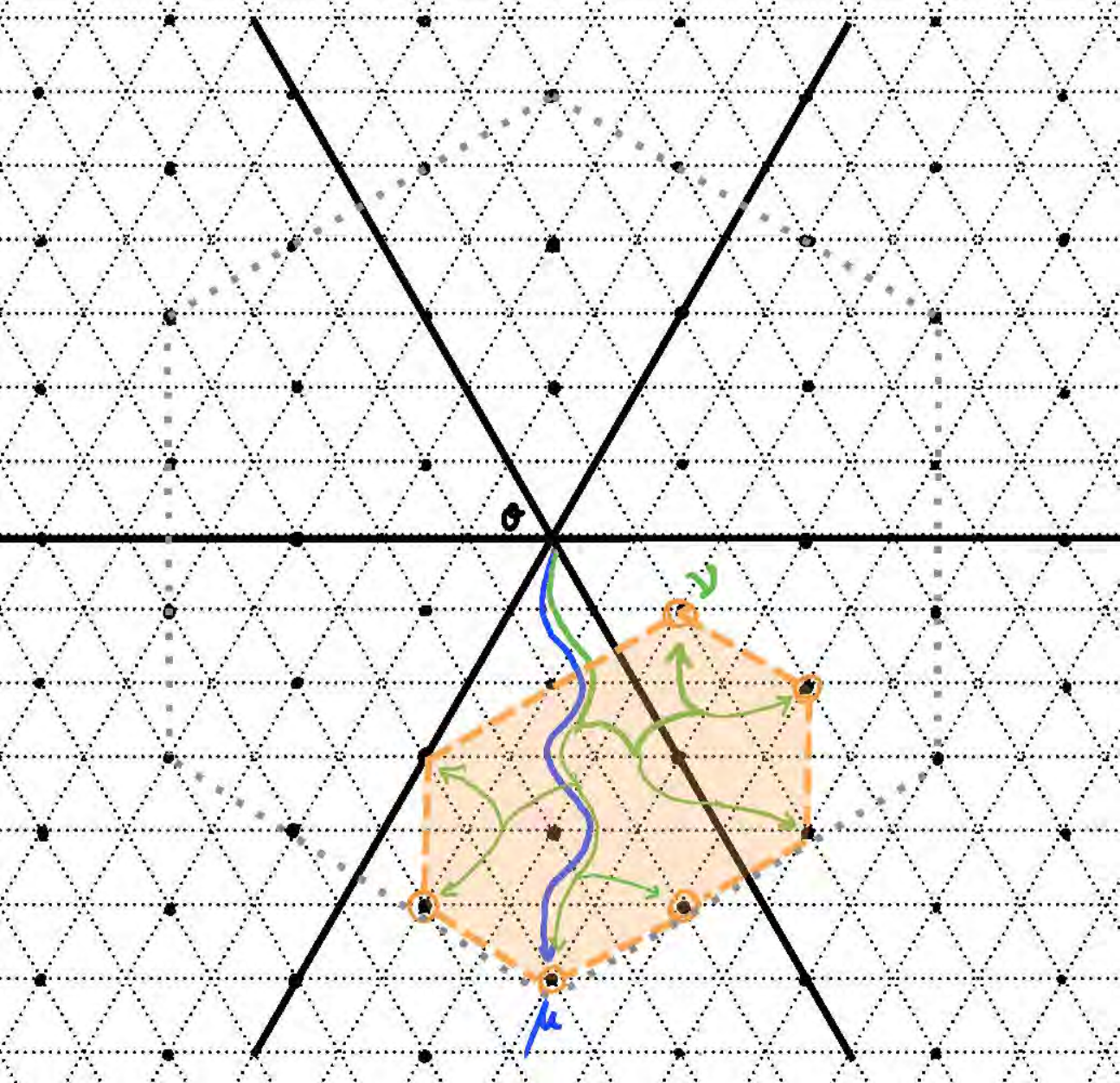}\hspace{2ex}
	\caption{The light-orange set is the MV-polytope associated with $\lambda$ and $\nu$. }
	\label{fig:MVcycle} 
\end{figure}

\subsection{What's next? }

Throughout the text I have already hinted at open problems and questions here and there. Let me collect these and some more  for future reference.

\begin{enumerate}
	\item So far only recursive descriptions of alcove shadows are known. Provide closed formulas for alcove and Weyl chamber shadows in (affine) Coxeter groups. 
	\item In Theorem~\ref{thm:regular_shadow} we provide a recursive description of shadows with respect to Weyl chamber orientations. It would be interesting to study the analogous behavior of shadows in other classes of Coxeter groups or with respect to other braid-invariant orientations. 
	\item Introduce braid-invariant orientations in hyperbolic Coxeter groups and compute (either via closed formulas or recursively) their shadows. 
	\item Proposition~\ref{prop:shadowsretractions} illustrates that vertex shadows with respect to Weyl chamber orientations arise very naturally as images of balls around the origin under retractions based at infinity. 
	It would be interesting to see whether similar characterizations could be shown for shadows with respect to alcove orientations and the retractions based at alcoves. 

	\item Kamnitzer \cite{KamnitzerAnnals} gives an explicit description of MV cycles and describes MV polytopes as pseudo-Weyl polytopes whose $2$-faces are rank 2 MV polytopes themselves. This suggests that shadows should also be determined by the shadows in rank two. 
	

	\item Motivated by the connection with Bruhat order I would be interested to see what the general connection between order relations on Coxeter groups and shadows is. 
	  
	\item \cite{MST, MST2} do not show nonemptiness of ADLVs in full generality. Complete the proof of nonemptiness and the dimension count of ADLV in terms of folded galleries. 
	\item Study the relationship between sub-varieties of the affine Grassmannian and affine flag variety and vertex, respectively alcove shadows. What is the connection to moment maps?
	\item Has the precise connection between positively folded galleries, the results of Lusztig \cite{Lusztig} and the work of  Braverman and Gaitsgory \cite{BravermanGaitsgory} been clarified? This question was mentioned in \cite{GaussentLittelmann}.

\end{enumerate}

There are, as always, many more questions that remain open. And I am curious to see what other results and applications will appear on this topic in the future.

\subsection*{Acknowledgments}
This work was supported through the program \emph{Research in Pairs}  by the Mathematisches Forschungsinstitut Oberwolfach in 2021. It was, as always, a great pleasure to work at MFO's excellent library.  
Many thanks also to my coauthors Elizabeth Mili\'cevi\'c and Anne Thomas. Parts of this survey are based on our joint work and the way I think about the material presented here is definitely influenced by our collaboration.   
In addition I would like to thank the anonymous referee for his or her detailed and thoughtful report and St\'ephane Gaussent, Elizabeth Mili\'cevi\'c, Anne Thomas and Linus Kramer for their helpful comments on an earlier version of this survey.

\nocite{BBirula, Beazley, GaussentLittelmann2, LenartPostnikov, Littelmann1, Littelmann2, Matsumoto, PRS, Ram, KM, LusztigCanonical, RapSatake, LakshmibaiSeshadri, HeNoteOn}
\renewcommand{\refname}{Bibliography}
\bibliography{Bibliography}
\bibliographystyle{alpha}

\end{document}